\numberwithin{equation}{section}
\newtheorem{thm}{Theorem}[section]
\newtheorem{prop}[thm]{Proposition}
\newtheorem{lem}[thm]{Lemma}
\theoremstyle{definition}
\newtheorem{rem}[thm]{Remark}
\newtheorem{rems}[thm]{Remarks}
\let\oldproofname=\proofname
\renewcommand{\proofname}{\rm\bf{\oldproofname}}
\newcommand{\N}{\mathbb{N}}
\newcommand{\Z}{\mathbb{Z}}
\newcommand{\R}{\mathbb{R}}
\newcommand{\C}{\mathbb{C}}
\newcommand{\cL}{\mathcal{L}}
\newcommand{\cM}{\mathcal{M}}
\newcommand{\cN}{\mathcal{N}}
\newcommand{\cO}{\mathcal{O}}
\renewcommand{\Re}{\mathop{\mathrm{Re}}}
\renewcommand{\Im}{\mathop{\mathrm{Im}}}
\newcommand{\dd}{\,{\rm d}}
\newcommand{\D}{{\rm d}}
\renewcommand{\:}{\thinspace :}
\renewcommand{\div}{\mathop{\mathrm{div}}\nolimits}
\newcommand{\curl}{\mathop{\mathrm{curl}}}
\newcommand{\dist}{\mathop{\mathrm{dist}}}
\newcommand{\QED}{\mbox{}\hfill$\Box$}
\newcommand{\Lamad}{\Lambda_{\rm ad}}
\newcommand{\Lamnl}{\Lambda_{\rm nl}}
\newdimen\texpscorrection
\newdimen\figcenter
\def\figurewithtex #1 #2 #3 #4 #5\cr{\null
  {\goodbreak\figcenter=\hsize\relax
  \advance\figcenter by -#4truecm
  \divide\figcenter by 2
  \begin{figure}[hbt]
  \vskip #3truecm\noindent\hskip\figcenter
  \includegraphics{#1}{\hskip\texpscorrection\input #2 }
  \vskip 0.8truecm{\baselineskip=0.8\baselineskip
  \noindent \vbox{\noindent {\footnotesize #5}}\par}
  \end{figure}}}
\def\point#1 #2 #3 {\rlap{\kern #1 truecm
\raise #2 truecm \hbox{#3}}}
\begin{document}

\title{Enhanced dissipation and axisymmetrization \\
 of two-dimensional viscous vortices}

\author{
\null\\
{\bf Thierry Gallay}\\
Institut Fourier\\
Universit\'e Grenoble Alpes et CNRS\\
100 rue des Maths, 38610 Gi\`eres, France\\
{\tt Thierry.Gallay@univ-grenoble-alpes.fr}}

\date{}

\maketitle

\begin{abstract}
This paper is devoted to the stability analysis of the Lamb-Oseen
vortex in the regime of high circulation Reynolds numbers. When 
strongly localized perturbations are applied, it is shown that 
the vortex relaxes to axisymmetry in a time proportional to 
$Re^{2/3}$, which is substantially shorter than the diffusion 
time scale given by the viscosity. This enhanced dissipation effect
is due to the differential rotation inside the vortex core. Our 
result relies on a recent work by Li, Wei, and Zhang \cite{LWZ}, 
where optimal resolvent estimates for the linearized operator 
at Oseen's vortex are established. A comparison is made with 
the predictions that can be found in the physical literature, 
and with the rigorous results that were obtained for shear 
flows using different techniques. 
\end{abstract}

\section{Introduction}\label{sec1}

It is a well known experimental fact that isolated vortices in
two-dimensional viscous flows relax to axisymmetry in a relatively
short time, because the differential rotation in the vortex core
creates small spatial scales in the radial direction which
substantially enhance the viscous dissipation \cite{Lu,RY,BL}.  This
effect can be quantified in terms of the circulation Reynolds number
$Re = |\Gamma|/\nu$, where $\Gamma$ denotes the total circulation of
the vortex and $\nu$ is the kinematic viscosity of the fluid. Whereas
axisymmetric concentrations of vorticity evolve toward the Gaussian
profile of the Lamb-Oseen vortex on the diffusion time scale, which is
$\cO(Re)$, it is observed that non-axisymmetric perturbations that
preserve the first moment of vorticity decay to zero in a much shorter
time, proportional to $Re^{1/3}$ \cite{BL,BG2}. Actually, the enhanced
dissipation effect is necessarily weaker near the vortex center where 
the differential rotation degenerates, and it can be shown that 
axisymmetrization occurs in that region on a slower time scale, 
typically of the order of $\Re^{1/2}$ \cite{BBG}. Similar conclusions
are reached when considering the distribution of a passive scalar advected 
by a vortex or by a shear flow \cite{BG2,BBG}. 

From a mathematical point of view, however, it is quite difficult 
to obtain rigorous results that describe under which assumptions
axisymmetrization occurs for two-dimensional vortices, and at which
rate. In the physical literature, the perturbations of an axisymmetric 
vortex are mostly studied at the level of the linearized equations, and 
both the time evolution of the underlying vortex and the deformation
of its streamlines are often neglected, so that vorticity is considered
as a passive scalar advected by a stationary flow. In this paper, we
focus on the archetypal example of the Lamb-Oseen vortex, and we 
establish the first stability result that describes and exploits 
the enhanced dissipation effect for the full nonlinear problem. Our
analysis shows that non-axisymmetric perturbations that preserve 
the first moment of vorticity disappear in a time proportional 
to $Re^{2/3}$, which is longer than the physical scales $Re^{1/3}$ 
and $Re^{1/2}$ obtained in \cite{Lu,BL,BG2}, but still substantially 
smaller than the diffusive scale. The origin of the new exponent 
$2/3$ will be explained in Section~\ref{subsec2.5} below, once
the resolvent estimates for the linearized operator will be presented. 

It is important to realize that axisymmetrization at high Reynolds
numbers plays a crucial role not only in the stability analysis of
isolated vortices in freely decaying turbulence, but also in a number
of related problems. For instance, Burgers vortices are stationary
solutions of the three-dimensional Navier-Stokes equations in the
presence of a linear strain field, and it is observed that the
streamlines of these vortices become more and more circular in the
limit of large Reynolds numbers, even if the strain is not
axisymmetric \cite{RS,MKO,JMV}. Rigorous results in this direction
have been obtained by C.E.~Wayne and the author \cite{GW3}, and by
Y.~Maekawa \cite{Ma2,Ma3}. Another example is the evolution of a
two-dimensional vortex in a time-dependent strain field, such as the
velocity field produced by a collection of other vortices. Accurate
asymptotic expansions \cite{TT,TK} and rigorous error estimates
\cite{Ga1} show that, for large Reynolds numbers, the vortex is nearly
axisymmetric and is deformed in such a way that the self-interaction
exactly compensates for the action of the exterior strain, except 
for a rigid translation. It should be mentioned, however, that 
the results presented below require a much deeper understanding
of the enhanced dissipation effect than what is necessary to 
prove axisymmetrization in \cite{GW3,Ga1}. The main new ingredient 
in our proof is the beautiful resolvent estimate recently obtained 
by Li, Wei, and Zhang \cite{LWZ} for the linearized operator at 
Oseen's vortex. 

We now state our results in a more precise way. Our starting point 
is the vorticity equation in the two-dimensional plane, which 
reads
\begin{equation}\label{omeq}
  \partial_t \omega(x,t) + u(x,t)\cdot\nabla \omega(x,t) \,=\, 
  \nu \Delta \omega(x,t)\,, 
\end{equation}
where $x \in \R^2$ is the space variable, $t \ge 0$ is the time
variable, and $\nu > 0$ is the kinematic viscosity. The velocity field
$u : \R^2 \times \R_+ \to \R^2$ is obtained from the vorticity 
$\omega : \R^2 \times \R_+ \to \R$ by solving the elliptic system
 $\div u = 0$, $\curl u \equiv \partial_1 u_2 - \partial_2 u_1 = 
\omega$. This leads to the two-dimensional Biot-Savart law
\begin{equation}\label{BS}
  u(x,t) \,=\, \frac{1}{2\pi}\int_{\R^2} \frac{(x-y)^\perp}{|x-y|^2}
  \,\omega(y,t) \dd y\,,
\end{equation}
which is studied in Section~\ref{subsec5.2}. In shorthand notation we 
write $u(\cdot,t) = K_{BS} * \omega(\cdot,t)$, where $K_{BS}(x) = 
\frac{1}{2\pi}\frac{x^\perp}{|x|^2}$ is the Biot-Savart kernel. 

The {\em Lamb-Oseen vortices} are self-similar solutions of 
equation \eqref{omeq} defined by
\begin{equation}\label{Osdef}
  \omega(x,t) \,=\, \frac{\Gamma}{\nu t}\,G\Bigl(\frac{x}{\sqrt{\nu t}}
  \Bigr)\,,\qquad u(x,t) \,=\, \frac{\Gamma}{\sqrt{\nu t}}\,
  v^G\Bigl(\frac{x}{\sqrt{\nu t}}\Bigr)\,,
\end{equation}
where the vorticity and velocity profiles have the following 
explicit expressions
\begin{equation}\label{Gdef}
  G(\xi) \,=\, \frac{1}{4\pi}\,e^{-|\xi|^2/4}\,, \qquad 
  v^G(\xi) \,=\, \frac{1}{2\pi}\,\frac{\xi^\perp}{|\xi|^2}
  \Bigl(1 - e^{-|\xi|^2/4}\Bigr)\,, \qquad \xi \in \R^2~.
\end{equation}
In particular we have $v^G = K_{BS}*G$. The parameter $\Gamma = \int_{\R^2} 
\omega(x,t)\dd x \in \R$ is called the {\em total circulation} of 
the vortex. We are especially interested in rapidly rotating 
vortices, where the circulation $|\Gamma|$ is much larger than the 
kinematic viscosity $\nu$. This is the regime that is most relevant 
for applications to two-dimensional turbulent flows. 

To study the stability of the vortices \eqref{Osdef}, we look for 
solutions of \eqref{omeq} in the form
\begin{equation}\label{wvdef}
\begin{aligned}
  \omega(x,t) \,&=\, \frac{1}{t+t_0} ~w\left(\frac{x-x_0}{\sqrt{\nu
  (t+t_0)}}\,,\,\log\Bigl(1+\frac{t}{t_0}\Bigr)\right)~, \\[1mm]
  u(x,t) \,&=\, \sqrt{\frac{\nu}{t+t_0}} ~v \left(\frac{x-x_0}{ 
  \sqrt{\nu(t+t_0)}}\,,\,\log\Bigl(1+\frac{t}{t_0}\Bigr)\right)~,
\end{aligned}
\end{equation}
see \cite{GR,Ga2}. The parameters $x_0 \in \R^2$ and $t_0 > 0$ 
are free at this stage, but convenient choices will be made 
later to optimize the results. The new space and time variables 
are denoted by
\begin{equation}\label{xitaudef}
  \xi \,=\, \frac{x-x_0}{\sqrt{\nu(t+t_0)}} \,\in\, \R^2~, 
  \qquad \tau \,=\, \log\Bigl(1+\frac{t}{t_0}\Bigr)\,\ge\, 0~.
\end{equation}
The rescaled vorticity $w(\xi,\tau)$ satisfies the evolution equation
\begin{equation}\label{weq}
  \partial_\tau w + v\cdot\nabla_\xi\,w \,=\, \Delta_\xi\,w + \frac12
  \,\xi\cdot\nabla_\xi\,w + w\,,
\end{equation}
where all dependent and independent variables are now dimensionless. 
The rescaled velocity $v$ is again given by the Biot-Savart law
\eqref{BS}, namely $v(\cdot,\tau) = K_{BS} * w(\cdot,\tau)$.  By
construction, the self-similar solutions \eqref{Osdef} of the original
equation \eqref{omeq} correspond to the family of equilibria
$\{\alpha G \,|\, \alpha \in \R\}$ of the rescaled equation
\eqref{weq}, where $\alpha = \Gamma/\nu$ is the circulation Reynolds
number. Our main purpose is to study the stability of these equilibria
in the large Reynolds number limit $|\alpha| \to \infty$.

Our first task is to choose a suitable function space for the solutions
of \eqref{weq}. There are in principle several possibilities, see 
\cite{GW2,GM2}, but to obtain uniform stability results in the large 
circulation limit it seems necessary to use the Hilbert space 
$X = L^2(\R^2,G^{-1}\dd\xi)$, equipped with the scalar product
\begin{equation}\label{Xscalar}
  \langle w_1,w_2\rangle_X \,=\, \int_{\R^2} G(\xi)^{-1} 
  \,\overline{w_1(\xi)}\,w_2(\xi)\dd\xi\,.
\end{equation}
The associated norm will be denoted by $\|w\|_X$, or simply by 
$\|w\|$ when no confusion is possible. The solutions of \eqref{weq}
are of course real-valued, but the spectral analysis of the 
linearized operator at Oseen's vortex will be performed in the 
complexified space defined by the scalar product \eqref{Xscalar}. 
Since $w \in X$ if and only if $G^{-1/2}w \in L^2(\R^2)$, it 
follows from \eqref{Gdef} that all elements of $X$ have a Gaussian
decay, in the $L^2$ sense, as $|\xi| \to \infty$. In particular 
we have $X \hookrightarrow L^p(\R^2)$ for any $p \in [1,2]$, and 
for later use we introduce the following closed subspaces\:
\begin{align}\label{X0def}
  X_0 \,&=\, \Bigl\{w \in X \,\Big|\, \int_{\R^2} w(\xi)\dd\xi = 
  0\Bigr\}\,,\\ \label{X1def}
  X_1 \,&=\, \Bigl\{w \in X_0 \,\Big|\, \int_{\R^2}\xi_i\,w(\xi)\dd\xi = 0
  \hbox{ for }i=1,2\Bigr\}\,.
\end{align}

We next recall that the Cauchy problem for equation \eqref{weq} is
globally well-posed in the space $X$, and that all solutions converge
to the family of equilibria $\{\alpha G\}_{\alpha \in \R}$ as
$\tau \to +\infty$.

\begin{prop}\label{prop:globex}{\bf\cite{GW2}}
For any $w_0 \in X$, the rescaled vorticity equation \eqref{weq} has 
a unique global mild solution $w \in C^0([0,\infty),X)$ such that 
$w(0) = w_0$. This solution satisfies $\|w(\tau) - \alpha G\|_X \to 0$ 
as $\tau \to +\infty$, where $\alpha = \int_{\R^2} w_0(\xi)\dd \xi$. 
\end{prop}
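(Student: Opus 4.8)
The plan is to establish global well-posedness and the long-time asymptotics in two stages, following the scheme of \cite{GW2}. First I would treat the local Cauchy problem. Writing \eqref{weq} in the mild (Duhamel) form, one uses the semigroup $e^{\tau L_0}$ generated by the linear operator $L_0 w = \Delta_\xi w + \frac12 \xi\cdot\nabla_\xi w + w$, which is explicitly related to the rescaled heat semigroup and acts as a bounded analytic semigroup on $X = L^2(\R^2, G^{-1}\dd\xi)$. The nonlinear term is $N(w) = -v\cdot\nabla_\xi w$ with $v = K_{BS}*w$; the key estimate is that $\|e^{\tau L_0} \div F\|_X \lesssim \tau^{-1/2}(1-e^{-\tau})^{-1/2}\|F\|$-type bounds allow one to absorb one derivative, while the Biot-Savart law gains regularity: $v \in L^\infty \cap \dot W^{1,p}$ controlled by $\|w\|_X$ (using $X \hookrightarrow L^p$ for $p \in [1,2]$, as noted in the excerpt, together with Hardy-Littlewood-Sobolev and the weighted structure of $X$). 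A standard contraction-mapping argument in $C^0([0,T],X)$ then yields a unique local mild solution on a maximal interval $[0,T_{\max})$.

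Second, I would promote this to a global solution by deriving a priori bounds. Here one exploits the variational structure: testing \eqref{weq} against $G^{-1} w$ in $X$, the transport term $v\cdot\nabla_\xi w$ is \emph{not} skew-symmetric in $X$ (unlike in $L^2$), but the dangerous contribution can be controlled because $v\cdot\nabla_\xi(\frac{|\xi|^2}{4})$ appears with a good sign, or alternatively one uses the known monotonicity of the relative entropy / the $H$-functional for \eqref{weq}. This gives $\frac{d}{d\tau}\|w(\tau)\|_X^2 \le C\|w(\tau)\|_X^2$ or better, ruling out finite-time blow-up, so $T_{\max} = +\infty$. The conservation law $\int_{\R^2} w(\xi,\tau)\dd\xi = \alpha$ for all $\tau$ follows by integrating \eqref{weq} and noting the right-hand side is a total divergence (in rescaled variables $\Delta_\xi w + \frac12\div(\xi w)$).

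Third, for the convergence $\|w(\tau) - \alpha G\|_X \to 0$, I would use the Lyapunov functional structure. The point is that $\alpha G$ is the unique equilibrium in the affine space $\{w : \int w = \alpha\}$, and \eqref{weq} admits a strict Lyapunov function (the relative entropy $\int_{\R^2} w \log(w/\alpha G)\dd\xi$ when $w, \alpha$ have the same sign, or a suitable quadratic surrogate valid near the equilibrium) which is non-increasing along trajectories and constant only at equilibria. Combined with the compactness of the trajectory $\{w(\tau) : \tau \ge 1\}$ in $X$ (obtained from parabolic smoothing, which gives bounds in a space compactly embedded in $X$ due to the confining weight $G^{-1}$), a LaSalle invariance argument forces the $\omega$-limit set to be $\{\alpha G\}$, whence convergence.

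The main obstacle I anticipate is the interplay between the transport nonlinearity and the weighted norm: the Biot-Savart kernel is only borderline integrable, and the weight $G^{-1}$ grows, so showing that $v\cdot\nabla_\xi w$ is well-defined and controlled in $X$ — and in particular that the energy estimate closes without sign-indefinite remainder terms spoiling global existence — requires care. In the full argument of \cite{GW2} this is handled by a careful decomposition of $w$ into its projections onto $\mathrm{span}\{G\}$, the spaces $X_0$, $X_1$, and their complement, exploiting that the problematic low-order moments are either conserved or decay, and I would follow that decomposition here.
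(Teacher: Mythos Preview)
Your local well-posedness sketch matches the paper's approach (Duhamel in $C^0([0,T],X)$ using weighted $L^p$--$L^q$ bounds for $e^{\tau\cL}$ and Biot--Savart estimates).

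The global existence step has a genuine gap. Your claim that ``$v\cdot\nabla_\xi(|\xi|^2/4)$ appears with a good sign'' is false: $\xi\cdot v$ has no sign for a general Biot--Savart velocity (it vanishes only for radially symmetric vorticity). The relative entropy alternative does not control the $X$-norm either. The actual energy identity in $X$ reads
\[
\tfrac12\tfrac{\D}{\D\tau}\|w\|_X^2 = -E[w] + \tfrac14\int G^{-1}(\xi\cdot v)\,w^2\dd\xi,
\]
and the cubic term is of size $E[w]\,\|v\|_{L^4}$, not $\|w\|_X^2\,\|v\|_{L^4}$; for large data this does not close as a Gronwall inequality without further input. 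The paper bypasses the energy route entirely and invokes the Carlen--Loss pointwise Gaussian estimate \cite{CL}, which directly yields $\|w(\tau)\|_X \le C(\|w_0\|_{L^1})\,\|w_0\|_X$ uniformly in $\tau$. This is the missing ingredient.

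For the convergence, your LaSalle strategy is the one used in \cite{GW2} for the polynomially weighted spaces $L^2(m)$, but transplanting it to $X$ requires precisely the uniform weighted bounds you have not secured (compactness in $X$ needs tightness at infinity against the Gaussian weight). The paper takes a shorter route: it writes the energy inequality for $\tilde w = w-\alpha G$, bounds the cubic term by $C\,E[\tilde w]\,\|\tilde v\|_{L^4}$, and then uses the \emph{already established} convergence $\|\tilde w(\tau)\|_{L^{4/3}}\to 0$ in $L^2(m)$ from \cite{GW2} to make $\|\tilde v\|_{L^4}$ eventually small, after which the $X$-energy estimate becomes dissipative. So rather than redoing LaSalle in $X$, the paper bootstraps from the $L^2(m)$ result.
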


According to the usual terminology, a mild solution of \eqref{weq} is a 
solution of the associated integral equation, which is Eq.~\eqref{wint} 
below. Proposition~\ref{prop:globex} is essentially taken from  
\cite{GW2}, except that we use here a different function space. 
In \cite{GW2}, the rescaled vorticity equation \eqref{weq} is studied 
in the polynomially weighted space $L^2(m) = \{w\,|\,(1{+}|\xi|^2)^{m/2}w 
\in L^2(\R^2)\}$ with $m > 1$, and it is asserted in \cite{GR,GM1,GM2}, 
without detailed justification, that the results of \cite{GW2} 
remain valid in the smaller space $X$. Since the choice of the
Gaussian space $X$ seems essential in the present paper, we give 
a short proof of Proposition~\ref{prop:globex} in Section~\ref{subsec5.1}
below. 

We now study the behavior of the solutions of \eqref{weq} in the
neighborhood of the family of Oseen vortices. The following
preliminary result shows that the equilibrium $\alpha G$ is
asymptotically stable for any value of the circulation parameter
$\alpha$, and provides a uniform estimate on the size of the basin of
attraction.

\begin{prop}\label{prop:locstab}{\bf\cite{GR,Ga2}} There exists $\epsilon > 0$ 
such that, for all $\alpha \in \R$ and all $w_0 \in \alpha G + X_0$ such 
that $\|w_0 - \alpha G \|_X \le \epsilon$, the unique solution of \eqref{weq} 
with initial data $w_0$ satisfies
\begin{equation}\label{wlocstab}
  \|w(\cdot,\tau) - \alpha G \|_X \,\le\, \min(1,2\,e^{-\tau/2}) 
  \|w_0 - \alpha G \|_X\,, \qquad \forall\tau \ge 0\,.
\end{equation}
\end{prop}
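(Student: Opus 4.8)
The plan is to reduce to the equation for the perturbation $\tilde w = w - \alpha G$ and run an energy estimate in the Gaussian space $X$. Using that $\int_{\R^2}(w_0-\alpha G)\dd\xi=0$ together with the conservation of total mass under \eqref{weq}, one has $\tilde w(\cdot,\tau)\in X_0$ for all $\tau\ge0$, and Proposition~\ref{prop:globex} guarantees that $\tilde w$ is globally defined in $X$ and smooth for $\tau>0$. Subtracting the equilibrium identity $\cL(\alpha G)=0$ from \eqref{weq} (which holds since $v^G\cdot\nabla G=0$), one gets the perturbation equation
\begin{equation}\label{eq:planpert}
  \partial_\tau\tilde w \,=\, \cL_\alpha\tilde w+N(\tilde w)\,,\qquad \cL_\alpha=\cL-\alpha\Lambda\,,\quad N(\tilde w)=-\,v^{\tilde w}\cdot\nabla\tilde w=-\div\bigl(v^{\tilde w}\tilde w\bigr)\,,
\end{equation}
where $\cL\,w=\Delta w+\tfrac12\,\xi\cdot\nabla w+w$ is the linear operator of \eqref{weq}, $\Lambda=\Lamad+\Lamnl$ with $\Lamad w=v^G\cdot\nabla w$ and $\Lamnl w=v^w\cdot\nabla G$, and $v^{\tilde w}=K_{BS}*\tilde w$.

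The first step is a \emph{dissipativity estimate uniform in $\alpha$}: for every $\tilde w\in X_0$ in the form domain of $\cL$ one should have $\Re\langle\cL_\alpha\tilde w,\tilde w\rangle_X\le-\tfrac12\|\tilde w\|_X^2$. Here I would use that $\cL$ is self-adjoint on $X$ with spectrum $\{0,-\tfrac12,-1,\ldots\}$ and kernel spanned by $G$; since $\langle\tilde w,G\rangle_X=\overline{\int_{\R^2}\tilde w\dd\xi}$, the space $X_0$ is exactly the orthogonal complement of $G$ in $X$, so $\Re\langle\cL\tilde w,\tilde w\rangle_X\le-\tfrac12\|\tilde w\|_X^2$ on $X_0$. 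It then remains to check that $\Lambda$ is skew‑symmetric in $X$: for $\Lamad$ this is an integration by parts, because $v^G$ is divergence‑free and tangent to the level sets of $G^{-1}$, so that $\div(G^{-1}v^G)=0$; for $\Lamnl$, using $\nabla G=-\tfrac12\,\xi\,G$ one rewrites $\langle\Lamnl\tilde w,\tilde w\rangle_X=\tfrac12\int_{\R^2}(\Delta\psi)(\partial_\theta\psi)\dd\xi$, where $\psi=\Delta^{-1}\tilde w$ is the stream function of $\tilde w$, and this integral vanishes because $\partial_\theta$ commutes with $\Delta$ and generates rotations. Consequently $\cL_\alpha$ generates a strongly continuous semigroup on $X_0$ with $\|e^{\tau\cL_\alpha}\tilde w\|_X\le e^{-\tau/2}\|\tilde w\|_X$ for all $\alpha$; this is where the choice $X=L^2(\R^2,G^{-1}\dd\xi)$ is essential.

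The second step is a nonlinear estimate. Integrating by parts in $\langle N(\tilde w),\tilde w\rangle_X$ and using $\div v^{\tilde w}=0$ and $\nabla(G^{-1})=\tfrac12\,\xi\,G^{-1}$, one obtains the identity
\begin{equation}\label{eq:planNid}
  \langle N(\tilde w),\tilde w\rangle_X \,=\, \frac14\int_{\R^2}\tilde w(\xi)^2\,G(\xi)^{-1}\bigl(v^{\tilde w}(\xi)\cdot\xi\bigr)\dd\xi\,,
\end{equation}
in which, crucially, no derivative falls on $\tilde w$. From \eqref{eq:planNid} I would derive the cubic bound $|\langle N(\tilde w),\tilde w\rangle_X|\le C\|\tilde w\|_X^3$ with a universal constant $C$, applied to $\tilde w(\cdot,\tau)$ for $\tau>0$ (where the solution is smooth and rapidly decaying). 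I expect this to be the main technical obstacle: $X$ embeds into $L^p(\R^2)$ only for $p\le2$ and controls no moments, so the bound must be extracted from the interplay between the decay of the Biot–Savart kernel and the Gaussian weight — splitting the integral into a bounded region, where $G^{-1}$ is harmless and a local estimate controls $v^{\tilde w}$, and an exterior region, where $v^{\tilde w}(\xi)\cdot\xi$ decays (the mean of $\tilde w$ being zero) fast enough to beat the linear factor $|\xi|$ — essentially the weighted Biot–Savart estimates already used in \cite{GW2}.

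Finally I would combine the energy identity $\tfrac12\frac{d}{d\tau}\|\tilde w\|_X^2=\Re\langle\cL_\alpha\tilde w,\tilde w\rangle_X+\langle N(\tilde w),\tilde w\rangle_X$ with the two steps above to get, for $\tau>0$,
\begin{equation}\label{eq:planenergy}
  \frac{d}{d\tau}\|\tilde w(\cdot,\tau)\|_X^2 \,\le\, -\|\tilde w(\cdot,\tau)\|_X^2+2C\|\tilde w(\cdot,\tau)\|_X^3\,.
\end{equation}
Now fix $\epsilon>0$ so small that $2C\epsilon\le\tfrac12$ and $e^{4C\epsilon}\le2$. If $\|\tilde w_0\|_X=\|w_0-\alpha G\|_X\le\epsilon$, then as long as $\|\tilde w(\cdot,\tau)\|_X\le\epsilon$ the right‑hand side of \eqref{eq:planenergy} is $\le-\tfrac12\|\tilde w(\cdot,\tau)\|_X^2$, so a continuity argument (using the global existence from Proposition~\ref{prop:globex}) shows that $\|\tilde w(\cdot,\tau)\|_X$ is nonincreasing — hence $\le\|\tilde w_0\|_X$ for all $\tau\ge0$ — and in fact $\|\tilde w(\cdot,\tau)\|_X\le e^{-\tau/4}\|\tilde w_0\|_X$. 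Reinserting this last bound into \eqref{eq:planenergy} gives $\frac{d}{d\tau}\|\tilde w\|_X^2\le(-1+2Ce^{-\tau/4}\|\tilde w_0\|_X)\|\tilde w\|_X^2$, and Gronwall's lemma then yields $\|\tilde w(\cdot,\tau)\|_X^2\le\|\tilde w_0\|_X^2\,e^{-\tau}\exp(8C\|\tilde w_0\|_X)\le4\,e^{-\tau}\|\tilde w_0\|_X^2$, i.e.\ $\|\tilde w(\cdot,\tau)\|_X\le2\,e^{-\tau/2}\|\tilde w_0\|_X$. Together with the nonincrease of $\|\tilde w(\cdot,\tau)\|_X$ this is exactly \eqref{wlocstab}. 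The parts requiring real work are the uniform dissipativity of Step~1 and, above all, the cubic nonlinear bound of Step~2; the Gronwall/bootstrap closing is routine.
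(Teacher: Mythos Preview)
Your overall strategy --- perturbation equation, skew-symmetry of $\Lambda$ to kill the $\alpha$-dependence, identity \eqref{eq:planNid}, then an energy/Gronwall closing --- is exactly the paper's. The gap is in Step~2: the cubic bound $|\langle N(\tilde w),\tilde w\rangle_X|\le C\|\tilde w\|_X^3$ does not follow from the splitting you sketch. In the interior region $\{|\xi|\le R\}$ you are left with $\int G^{-1}\tilde w^2|v^{\tilde w}|\dd\xi$, and any H\"older splitting of this forces either $v^{\tilde w}\in L^\infty$ (which Biot--Savart from $\tilde w\in L^2$ does not give --- it only yields BMO) or $G^{-1/2}\tilde w\in L^p$ for some $p>2$ (which membership in $X$ does not give). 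The weighted Biot--Savart estimates of \cite{GW2} are set in polynomially weighted spaces and do not transfer to a pure $\|\cdot\|_X^3$ bound here.

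The paper resolves this not by proving your cubic estimate but by trading two of the three factors for the dissipation itself. From \eqref{eq:planNid} and H\"older,
\[
  \Bigl|\int_{\R^2} G^{-1}(\xi\cdot\tilde v)\,\tilde w^2\dd\xi\Bigr|
  \,\le\, \||\xi|\tilde w\|_X\,\|G^{-1/2}\tilde w\|_{L^4}\,\|\tilde v\|_{L^4}\,,
\]
and the key observation (Lemma~\ref{lem:E}) is that the quadratic form $E[\tilde w]=-\langle\cL\tilde w,\tilde w\rangle_X$ controls \emph{both} $\||\xi|\tilde w\|_X^2$ and $\|\nabla\tilde w\|_X^2$ (hence $\|G^{-1/2}\tilde w\|_{L^4}^2$ via Gagliardo--Nirenberg). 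Since $\|\tilde v\|_{L^4}\le C\|\tilde w\|_{L^{4/3}}\le C\|\tilde w\|_X$, one gets
\[
  \frac{\D}{\D\tau}\|\tilde w\|_X^2 \,\le\, -2E[\tilde w]\bigl(1-C_9\|\tilde w\|_X\bigr)\,,
\]
and on $X_0$, where $E[\tilde w]\ge\tfrac12\|\tilde w\|_X^2$, this becomes $\frac{\D}{\D\tau}\|\tilde w\|_X^2\le-\|\tilde w\|_X^2(1-C_9\|\tilde w\|_X)$. Integrating this scalar ODE explicitly gives \eqref{wlocstab}. In short: do not try to bound the nonlinearity by $\|\tilde w\|_X^3$; bound it by $E[\tilde w]\cdot\|\tilde w\|_X$ and let the linear coercivity absorb the two factors that $X$ alone cannot control.
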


The assumption that the initial perturbation has zero average, namely
that $w_0 - \alpha G \in X_0$, does not restrict the generality\: as
is shown in Section~\ref{subsec4.1}, the general case can be deduced
by an elementary transformation. Estimate \eqref{wlocstab} is
established in \cite[Proposition~4.1]{GR} or in \cite[Proposition~4.5]{Ga2}, 
but the proof is quite simple and for the reader's convenience we 
reproduce it at the end of Section~\ref{subsec5.1}. 

The limitation of Proposition~\ref{prop:locstab} is that it does not
take into account the enhanced dissipation effect for large $|\alpha|$
due to the differential rotation. When translated back into the
original variables, it simply asserts that small perturbations of the
Lamb-Oseen vortex decay to zero on the diffusion time scale.  Building
on the recent work of Li, Wei, and Zhang \cite{LWZ}, we now formulate
an improved stability result which shows that the basin of attraction
of Oseen's vortex becomes very large in the high Reynolds number limit
$|\alpha| \to \infty$, and that perturbations relax to axisymmetry in
a much shorter time. For simplicity, we restrict ourselves to 
solutions of \eqref{weq} that satisfy $w(\cdot,\tau) - \alpha G 
\in X_1$ for all $\tau \ge 0$.  This condition is preserved under the
evolution defined by \eqref{weq}, so our assumption means that we 
consider perturbations that do not alter the total circulation $\alpha$ 
nor the first-order moments of the vorticity distribution.
 
Our main result can be stated as follows. 

\begin{thm}\label{thm:main} There exist positive constants 
$C_1$, $C_2$, and $\kappa$ such that, for all $\alpha \in \R$ and all 
initial data $w_0 \in \alpha G + X_1$ such that 
\begin{equation}\label{wbasin}
  \|w_0 - \alpha G \|_X \,\le\, \frac{C_1\,(1+|\alpha|)^{1/6}}{
  \log(2+|\alpha|)}\,
\end{equation}
the unique solution of \eqref{weq} in $X$ given by 
Proposition~\ref{prop:globex} satisfies, for all $\tau \ge 0$, 
\begin{align}\label{wrdecay}
  \|w(\cdot,\tau) - \alpha G \|_X \,&\le\, C_2\,e^{-\tau} \|w_0 - 
  \alpha G \|_X\,, \\ \label{wperpdecay}
  \|(1-P_r)(w(\cdot,\tau) - \alpha G) \|_X \,&\le\, C_2\|w_0 - 
  \alpha G \|_X \,\exp\Bigl(-\frac{\kappa (1+|\alpha|)^{1/3}\tau}{
  \log(2+|\alpha|)}\Bigr)\,,
\end{align}
where $P_r$ is the orthogonal projection in $X$ onto the subspace 
of all radially symmetric functions. 
\end{thm}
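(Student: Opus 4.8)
The plan is to linearize around the equilibrium, turn the Li--Wei--Zhang resolvent estimate into a sharp decay rate for the nonradial part of the perturbation, and then close a bootstrap for the full nonlinear equation. Writing $w = \alpha G + \tilde w$ with $\tilde w(\cdot,\tau) \in X_1$, and using $v^G\cdot\nabla G = 0$, the perturbation obeys
\begin{equation*}
  \partial_\tau\tilde w \,=\, \cL_\alpha\tilde w - \div\bigl(v^{\tilde w}\tilde w\bigr)\,,\qquad
  \cL_\alpha \,=\, L - \alpha\Lambda\,,\qquad L:= \Delta_\xi + \tfrac12\,\xi\cdot\nabla_\xi + \mathrm{id}\,,
\end{equation*}
with $\Lambda\tilde w = v^G\cdot\nabla\tilde w + v^{\tilde w}\cdot\nabla G$. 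Two structural facts are the backbone of the argument. First, $L$ is self-adjoint on $X$ with $\langle Lw,w\rangle_X \le -\|w\|_X^2$ for $w\in X_1$ (the spectrum of $L$ on $X_1$ being $\{-n/2 : n\ge 2\}$), while $\Lambda$ is skew-adjoint on $X$: indeed $\langle v^G\cdot\nabla w,w\rangle_X = 0$ because $\div(G^{-1}v^G)=0$, and $\langle v^{w}\cdot\nabla G,w\rangle_X = -\tfrac12\int_{\R^2}(v^w\cdot\xi)\,w\dd\xi$ vanishes since $v^w\cdot\xi = -\partial_\theta\psi^w$ (where $\Delta\psi^w=w$) integrates to zero against $\Delta\psi^w$ after an integration by parts in the angular variable. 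Hence $\langle\cL_\alpha w,w\rangle_X = \langle Lw,w\rangle_X$. Second, $\cL_\alpha$ commutes with rotations, hence with the orthogonal projection $P_r$, and $\Lambda$ annihilates radial functions, so $\cL_\alpha = L$ on $P_r X$. I therefore split $\tilde w = \tilde w_r + \tilde w_\perp$, $\tilde w_r = P_r\tilde w$, $\tilde w_\perp = (1-P_r)\tilde w$, both of which stay in $X_1$.

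The linear ingredients needed are: (i) from $\langle\cL_\alpha w,w\rangle_X = \langle Lw,w\rangle_X \le -\|w\|_X^2$, the energy identity yields $\|e^{\tau\cL_\alpha}\|_{X_1\to X_1}\le e^{-\tau}$ together with the \emph{$\alpha$-uniform} bound $\int_0^\infty\|\nabla(G^{-1/2}e^{s\cL_\alpha}w)\|_{L^2}^2\dd s\le C\|w\|_X^2$; (ii) the resolvent estimate of \cite{LWZ}, which controls $(\cL_\alpha-\lambda)^{-1}$ on $(1-P_r)X$ uniformly for $\Re\lambda\ge 0$ by $C\log(2+|\alpha|)(1+|\alpha|)^{-1/3}$, combined with the quantitative Gearhart--Pr\"uss theorem for the accretive operator $-\cL_\alpha$ on $(1-P_r)X$, which gives $\|e^{\tau\cL_\alpha}(1-P_r)\|_{X\to X}\le C\exp(-\nu_\alpha\tau)$ with $\nu_\alpha := \kappa(1+|\alpha|)^{1/3}/\log(2+|\alpha|)$; and (iii) a smoothing refinement of (ii), namely $\|e^{\tau\cL_\alpha}(1-P_r)\div F\|_X\le C(\tau^{-1/2}\wedge\nu_\alpha^{1/2})\,e^{-\nu_\alpha\tau/2}\|F\|_X$, obtained from (i), (ii), and the half-derivative gain $\|\nabla(G^{-1/2}w)\|_{L^2}^2\lesssim|\langle(\cL_\alpha-\lambda)w,w\rangle_X|$, which is automatic once one has the scalar resolvent bound. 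The decisive feature of (iii) is that the smoothing constant \emph{saturates} at $\nu_\alpha^{1/2}\sim(1+|\alpha|)^{1/6}$ rather than growing with $\alpha$, so that
\begin{equation*}
  \int_0^\infty\bigl(\tau^{-1/2}\wedge\nu_\alpha^{1/2}\bigr)\,e^{-\nu_\alpha\tau/2}\dd\tau\,\le\,C\,\nu_\alpha^{-1/2}\,\le\,\frac{C\,\log(2+|\alpha|)^{1/2}}{(1+|\alpha|)^{1/6}}\,,
\end{equation*}
which is small precisely in the regime $|\alpha|\to\infty$ — and this is what makes the basin in \eqref{wbasin} grow.

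For the nonlinearity I will use bilinear bounds of the form $\|v^{f}g\|_X\lesssim\|f\|_X\|g\|_X$, up to logarithmic factors, coming from the Biot--Savart estimates of Section~\ref{subsec5.2} and H\"older's inequality in the Gaussian weight, together with three algebraic facts: $v^{\tilde w_r}\cdot\nabla\tilde w_r\equiv 0$; the term $v^{\tilde w_r}\cdot\nabla\tilde w_\perp$ is a derivative-free multiplication operator (equal to $in\Omega_{\tilde w_r}(r)(\tilde w_\perp)_n$ on the angular mode $n$) of $X$-norm $\lesssim\|\tilde w_r\|_X\|\tilde w_\perp\|_X$; and $\int_{\R^2}v^f_i\, g\dd\xi = -\int_{\R^2}v^g_i\, f\dd\xi$, so that $\div(v^{\tilde w}\tilde w)\in X_1$. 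Then I run a continuity argument on the maximal interval on which $\|\tilde w(\tau)\|_X\le 2C_2\|\tilde w_0\|_X\,e^{-\tau}$ and $\|\tilde w_\perp(\tau)\|_X\le 2C_2\|\tilde w_0\|_X\,e^{-\nu_\alpha\tau/2}$. For the second bound one writes Duhamel's formula with $e^{\tau\cL_\alpha}(1-P_r)$ and inserts (ii)--(iii); the worst term, $\div(v^{\tilde w_\perp}\tilde w_\perp)$, is absorbed because $\int_0^\tau\bigl((\tau-s)^{-1/2}\wedge\nu_\alpha^{1/2}\bigr)e^{-\nu_\alpha(\tau-s)/2}\,e^{-\nu_\alpha s}\dd s\le C\nu_\alpha^{-1/2}e^{-\nu_\alpha\tau/2}$, and the terms linear in $\tilde w_\perp$ even more easily. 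For the first bound one uses $\|e^{\tau\cL_\alpha}\|_{X_1\to X_1}\le e^{-\tau}$ together with the fact that, by the first algebraic identity, every term of $\div(v^{\tilde w}\tilde w)$ carries a factor $\tilde w_\perp$ and hence decays at the enhanced rate. Closing both estimates forces $C_2\|\tilde w_0\|_X\,\nu_\alpha^{-1/2}\le c_0$ for a small absolute constant $c_0$, which is precisely condition \eqref{wbasin}; then \eqref{wperpdecay} is the second bound and \eqref{wrdecay} the first. For bounded $|\alpha|$, where \eqref{wbasin} is just a fixed small constant, the conclusion already follows from Proposition~\ref{prop:locstab} and (i); the behaviour near $\tau=0$ is controlled by the local well-posedness theory of Section~\ref{subsec5.1}.

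The step I expect to be the main obstacle is ingredient (iii): a direct energy/commutator estimate for $e^{\tau\cL_\alpha}\div$ loses a full power of $\alpha$ — through the commutator of $\nabla_\xi$ with the large multiplication operator $-i\alpha n\,\Omega_G(r)$ coming from $-\alpha\Lambda$ — which would wreck the bootstrap. Showing instead that the smoothing constant saturates at $\nu_\alpha^{1/2}\sim(1+|\alpha|)^{1/6}$, i.e.\ that the Li--Wei--Zhang resolvent bound survives a gain of half a derivative, is the heart of the matter; the resulting $\nu_\alpha^{-1/2}$ gain in the Duhamel estimate is what produces the exponent $1/6$ in \eqref{wbasin}, while the rate in \eqref{wperpdecay} is exactly the resolvent-based decay rate of (ii). A secondary, more routine difficulty is the careful propagation of the Gaussian weights and of the logarithmic factors through the Biot--Savart and interpolation inequalities, and the bookkeeping of the $\tau\to 0^+$ singularities in the bootstrap.
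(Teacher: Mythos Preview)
Your overall architecture matches the paper's --- decompose $\tilde w=\tilde w_r+\tilde w_\perp$, use skew-symmetry of $\Lambda$ for $\alpha$-independent energy control, turn the LWZ resolvent bound into enhanced semigroup decay on $X_\perp$, and run a bootstrap --- and you correctly locate the crux: absorbing the derivative in $\div(v^{\tilde w}\tilde w)$ without losing powers of $\alpha$. The gap is that your ingredient (iii), the pointwise smoothing
\[
  \|e^{\tau\cL_\alpha}(1-P_r)\div F\|_X \,\le\, C\bigl(\tau^{-1/2}\wedge\nu_\alpha^{1/2}\bigr)\,e^{-\nu_\alpha\tau/2}\|F\|_X
\]
with $C$ independent of $\alpha$, does not follow from (i), (ii) and the resolvent half-derivative gain as you sketch. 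The half-derivative gain does yield $\|(\cL_\alpha-i\mu)^{-1}\div F\|_X\lesssim|\alpha|^{-1/6}\|F\|_X$ uniformly in $\mu\in\R$, but in the inverse Laplace representation one must integrate this over a vertical contour of length $\cO(|\alpha|)$ (the size of the numerical range of $\alpha\Lambda$, cf.\ Lemma~\ref{lem:numrange}), producing a prefactor $|\alpha|^{5/6}$ rather than $\nu_\alpha^{1/2}\sim|\alpha|^{1/6}$. For the short-time part $\tau^{-1/2}$, the energy identity only gives the time-integrated bound $\int_0^\infty E[e^{s\cL_\alpha^*}g]\dd s\le C\|g\|_X^2$; converting this into a pointwise bound on $E[e^{\tau\cL_\alpha^*}g]$ would require a differential inequality for $E$, and $\partial_\tau E[h]=-2\|\cL h\|_X^2+\alpha\langle h,[\cL,\Lambda]h\rangle_X$ contains an $\cO(\alpha)$ commutator that defeats any direct argument. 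The paper flags exactly this obstruction in the paragraph preceding Lemma~\ref{lem:dissip}.

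The paper's fix is to replace (iii) by the $L^2$-in-time estimate of Lemma~\ref{lem:dissip},
\[
  \Bigl\|\int_0^\tau e^{(\tau-s)\cL_\alpha}\div f(s)\dd s\Bigr\|_X^2 \,\le\, C_0\int_0^\tau\|f(s)\|_X^2\dd s\,,
\]
a one-line energy computation with $C_0$ independent of $\alpha$ (the $\alpha\Lambda$ term drops by skew-symmetry). To couple this with the enhanced decay, the Duhamel integral is \emph{sliced} into intervals of length $\tau_0\sim\log|\alpha|/|\alpha|^{1/3}$: on each slice Lemma~\ref{lem:dissip} costs a factor $\tau_0^{1/2}$, and the semigroup from the endpoint of the slice to time $\tau$ supplies the decay $e^{-\mu(\tau-(k+1)\tau_0)}$; summing the slices recovers precisely the $\tau_0^{1/2}\sim\nu_\alpha^{-1/2}$ gain you are after, and this is what produces the exponent $1/6$ in \eqref{wbasin}. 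Two minor corrections: the LWZ resolvent bound (Proposition~\ref{prop:LWZ}) carries no logarithm --- the $\log(2+|\alpha|)$ in \eqref{wperpdecay} enters through the prefactor $|\alpha|^{2/3}$ in the semigroup estimate \eqref{semiest5}, i.e.\ through $\tau_0$; and your claim that $v^{\tilde w_r}\cdot\nabla\tilde w_\perp$ has $X$-norm $\lesssim\|\tilde w_r\|_X\|\tilde w_\perp\|_X$ is false (the Fourier multiplier on mode $n$ is $in\,v^{\tilde w_r}_\theta(r)/r$, unbounded in $n$), so this term must also be kept in divergence form and handled via Lemma~\ref{lem:dissip}.
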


\begin{rems}\label{mainrem}\quad \\[1mm]
{\bf 1.} Theorem~\ref{thm:main} improves Proposition~\ref{prop:locstab} 
only when the circulation parameter $|\alpha|$ is sufficiently large. 
In the proof, we shall therefore assume that $|\alpha| \ge \alpha_0 
\gg 1$, in which case we can replace $1+|\alpha|$ and $2+|\alpha|$
by $|\alpha|$ in estimates \eqref{wbasin}, \eqref{wperpdecay}. Note
that, when $w_0 - \alpha G \in X_1$, the bound \eqref{wlocstab}
holds with the factor $e^{-\tau/2}$ replaced by $e^{-\tau}$ \cite{Ga2}, 
and this is why we also have the overall decay rate $e^{-\tau}$ in 
\eqref{wrdecay}. \\[1mm]
{\bf 2.} As is explained in Section~\ref{subsec4.1}, the assumption
that the initial perturbation $w_0 - \alpha G$ belongs to the 
subspace $X_1$ does not restrict the generality if $\alpha \neq 0$, 
because the general case can be reduced to that situation by a simple change 
of variables. From a more conceptual point of view, if the initial vorticity 
distribution $\omega_0$ has a nonzero total circulation $\Gamma
= \alpha \nu$, and if we choose the parameter $x_0$ in \eqref{wvdef} to 
be the {\em center of vorticity}, then by construction the rescaled 
vorticity satisfies $w - \alpha G \in X_1$ at initial time 
$\tau = 0$, hence for all subsequent times since both the total 
circulation and the center of vorticity are conserved quantities 
for the two-dimensional Navier-Stokes equations. \\[1mm]
{\bf 3.} Estimate \eqref{wbasin} shows that the size of the immediate
basin of attraction of Oseen's vortex $\alpha G$ grows at least like
$|\alpha|^{1/6}$ as $|\alpha| \to \infty$, up to a logarithmic
factor. By ``immediate basin of attraction'', we mean here the set of
initial data for which the solutions converge back to Oseen's vortex
in such a way that they can be controlled by the linearized equation
for all positive times, using for instance Duhamel's formula. We
recall that, according to Proposition~\ref{prop:globex}, all solutions
of \eqref{weq} with initial data $w_0 \in \alpha G + X_0$ converge to
$\alpha G$ as $\tau \to +\infty$, so in this sense the basin of
attraction of Oseen's vortex $\alpha G$ has infinite size for any
fixed $\alpha$.  But Proposition~\ref{prop:globex} is nonconstructive,
and general solutions of \eqref{weq} can go through all the stages of
two-dimensional freely decaying turbulence before reaching the
asymptotic regime described by Oseen's vortex, whereas
Theorem~\ref{thm:main} provides explicit control on the solutions for
all times, as illustrated in estimates
\eqref{wrdecay}, \eqref{wperpdecay}. \\[1mm]
{\bf 4.} The decay rate in \eqref{wperpdecay} is a direct consequence
of the resolvent estimate obtained in \cite{LWZ}, which is known 
to be sharp, hence there are reasons to believe that the bound
\eqref{wperpdecay} is close to optimal. As for the size of the 
basin of attraction, although the proof of Theorem~\ref{thm:main} 
naturally leads to estimate \eqref{wbasin}, we do not know if 
that bound is optimal in any sense. 
\\[1mm]
{\bf 5.} Using the spectral estimate established in \cite[Section~6]{LWZ},
it is possible to show that the solutions of \eqref{weq} considered
in Theorem~\ref{thm:main} satisfy
\begin{equation}\label{wrapid}
  \limsup_{\tau \to +\infty}\frac{1}{\tau}\,\log \|(1-P_r)(w(\cdot,\tau) 
  - \alpha G) \|_X \,\le\, -\kappa' (1+|\alpha|)^{1/2}\,,
\end{equation}
for some constant $\kappa'$ independent of $\alpha$, see also 
inequality \eqref{semiest4} below. However the asymptotic 
regime described in \eqref{wrapid} is only reached at very 
large times, and may not be observable in real flows.
\end{rems}

It is instructive to compare the conclusions of Theorem~\ref{thm:main}
with predictions that can be found in the physical literature 
\cite{BL,BG2}, and with rigorous results describing the 
asymptotic stability of the two-dimensional Couette flow and 
other viscous shear flows \cite{BMV,BVW,BCZ}. To do that, it 
is convenient to fix the circulation and the spatial extent 
of the vortex, while choosing the viscosity parameter small 
enough to reach the high Reynolds number regime. We thus
consider the following initial data for the original vorticity 
equation \eqref{omeq}\:
\[
  \omega_0(x) \,=\, \frac{\Gamma}{R^2}\,G\Bigl(\frac{x}{R}\Bigr)
  + \tilde \omega_0(x)\,, \qquad x \in \R^2\,,
\]
where the circulation $\Gamma > 0$ and the vortex radius $R > 0$ 
are fixed parameters. We assume that the perturbation $\tilde 
\omega_0$ decays rapidly enough at infinity so that $\exp(|x|^2/(8R^2))
\tilde \omega_0 \in L^2(\R^2)$, and satisfies
\[
  \int_{\R^2} \tilde \omega_0(x) \dd x \,=\, 0\,, \qquad
  \int_{\R^2} x_1 \,\tilde \omega_0(x) \dd x \,=\, \int_{\R^2} x_2 
  \,\tilde \omega_0(x) \dd x \,=\, 0\,.
\]
We introduce the turnover time $T = R^2/\Gamma$, and the diffusion
time $t_0 = R^2/\nu$ which depends on the viscosity parameter. As
$t_0/T = \Gamma/\nu$ is the circulation Reynolds number, henceforth
denoted by $\alpha$, we are interested in the regime where
$t_0 \gg T$. If we use the change of variables \eqref{wvdef} with
$x_0 = 0$ and $t_0$ as above, the rescaled vorticity at initial time
takes the form $w_0 = \alpha G + \tilde w_0$, where $\tilde w_0(\xi) = 
t_0 \tilde \omega_0(R\xi)$. Thus $\tilde w_0 \in X_1$ by construction, 
and we can apply Theorem~\ref{thm:main} to control the solution of 
\eqref{omeq} in the small viscosity regime. First, we infer from 
\eqref{wbasin} that
\[
  \frac{1}{\Gamma}\int_{\R^2} |\tilde \omega_0(x)|\dd x \,=\, 
  \frac{\nu}{\Gamma}\int_{\R^2} |\tilde w_0(\xi)|\dd \xi 
  \,\le\, C\,\frac{\nu}{\Gamma}\,\|\tilde w_0\|_X \,\le\, 
  C\,\frac{\nu}{\Gamma}\,|\alpha|^{1/6} \,=\, C \Bigl(\frac{
  \nu}{\Gamma}\Bigr)^{5/6}\,,
\]
and this indicates that we can consider perturbations $\tilde \omega_0$ 
whose size shrinks to zero like $\nu^\gamma$ as $\nu \to 0$, for any 
$\gamma > 5/6$. In the terminology of \cite{BVW}, we have shown that
the {\em stability threshold} for the Oseen vortex in the space $X$
is not larger than $5/6$. Next, we  deduce from \eqref{wperpdecay} that 
the non-axisymmetric part of the perturbation $\tilde w$ disappears 
in a time $\tau$ of the order of $\alpha^{-1/3}$, and since $\tau = 
\log(1 + t/t_0)$ we conclude that the vortex relaxes to axisymmetry 
in a time proportional to
\[
  t_{relax} \,=\, C\,\frac{t_0}{\alpha^{1/3}} \,=\, C\,T\,\frac{t_0}{T}\,
  \Bigl(\frac{\nu}{\Gamma}\Bigr)^{1/3} \,=\, C\,T\,\Bigl(\frac{\Gamma}{
  \nu}\Bigr)^{2/3}\,.
\]
The relaxation time predicted by Theorem~\ref{thm:main} is thus 
proportional to $T\alpha^{2/3}$, where $T$ is the turnover time 
of the unperturbed vortex, and $\alpha = \Gamma/\nu$ is the 
circulation Reynolds number. 

In contrast, the calculations performed in \cite{BL,BG1} indicate that
initial perturbations located near the vortex core, where the
differential rotation is maximal, relax to axisymmetry in a time
proportional to $T\alpha^{1/3}$. Similarly, perturbations of the
two-dimensional Couette flow relax to a shear flow in a time of the
order of $T Re^{1/3}$ \cite{BMV,BVW}, where $Re$ is the Reynolds
number and $T$ is again an appropriate turnover time. The apparent
discrepancy with the conclusions of Theorem~\ref{thm:main} is entirely
due to the fact that the differential rotation of Oseen's vortex
vanishes at the origin and at infinity. Such a degeneracy certainly
does not exist for the shear flows considered in \cite{BMV,BVW}, which
are close to Couette. For the vortex problem, the relaxation time is
known to be proportional to $T\alpha^{1/2}$ for perturbations
initially located at the vortex center \cite{BBG}, and to the
diffusive time $T\alpha$ for perturbations very far away from
the center \cite{RY}. From a mathematical point of view, these various
regimes cannot be considered separately, because they are coupled even
at the level of the linearized equation. If translation invariant
norms are used, this means that for perturbations of the Oseen 
vortex the diffusive decay rate is optimal in general. To obtain a 
stability result that takes advantage of the differential rotation, we use in 
Theorem~\ref{thm:main} the Gaussian space $X$ whose weight
creates an artificial damping of the perturbations far away from the 
origin, see Section~\ref{subsec2.5} below for a more precise 
description of that effect. This is why we can obtain a uniform 
relaxation time $T \alpha^{2/3}$ that is substantially smaller than 
the diffusion time scale, not only for the linearized equation 
but even for the full nonlinear problem. 

To conclude this introduction, we briefly mention that phenomena such
as relaxation of vortices to axisymmetry, or stability of shear flows,
can also be studied for perfect fluids, where the situation is quite
different (and in some sense more complicated). The interested reader 
is referred to \cite{BG1,BG3,BoM} for a physical analysis and 
to \cite{BM,WZZ} for recent mathematical results.

The rest of this paper is organized as follows. In Section~\ref{sec2}
we summarize what is known about the linearized operator at Oseen's
vortex, and we recall the beautiful resolvent estimate recently
obtained by Li, Wei, and Zhang \cite{LWZ}, which allows us to derive
semigroup estimates in Section~\ref{sec3}. The analysis of the
nonlinear problem is postponed to Section~\ref{sec4}, which contains
the proof of Theorem~\ref{thm:main}. The final section is an 
appendix, where we collect some known results on the rescaled 
vorticity equation \eqref{weq}, and derive estimates on the 
Biot-Savart law \eqref{BS} that are needed in our analysis. 

\medskip\noindent{\bf Acknowledgements.} The author is indebted 
to Jacob Bedrossian and David Dritschel for fruitful discussions. 
His research is supported in part by the ANR grant Dyficolti
(ANR-13-BS01-0003-01) from the French Ministry of Research. 

\section{Resolvent estimates for the linearized operator}\label{sec2}

In this section we study the linearization of equation \eqref{weq} at the 
equilibrium point $w = \alpha G$, for a given $\alpha \in \R$. Setting 
$w = \alpha G + \tilde w$ and $v = \alpha v^G + \tilde v$, where 
$\tilde v = K_{BS}*\tilde w$, we obtain for the perturbation $\tilde w$ 
the evolution equation
\begin{equation}\label{tildeweq}
   \partial_\tau \tilde w + \tilde  v \cdot \nabla \tilde w \,=\, 
   (\cL - \alpha \Lambda)\tilde w\,, 
\end{equation}
where $\cL$ is the linear operator in the right-hand side of \eqref{weq}\:
\begin{equation}\label{cLdef}
  \cL w \,=\, \Delta w + \frac12\,\xi\cdot\nabla w + w\,,
\end{equation}
and $\Lambda$ is the linearization at $w=G$ of the quadratic term $v \cdot 
\nabla w = (K_{BS}*w) \cdot \nabla w$\:
\begin{equation}\label{Lambdadef}
  \Lambda w \,=\, v^G \cdot \nabla w + (K_{BS}*w) \cdot \nabla G \,\equiv\,
  \Lamad w + \Lamnl w\,.
\end{equation}
Here and in what follows, it is understood that all differential operators 
act on the space variable $\xi \in \R^2$, except for the time derivative 
$\partial_\tau$ which is always explicitly indicated. 

We first recall a few classical properties of the operators $\cL$ and
$\Lambda$, which can be found e.g. in \cite{GW1,GW2,Ma1,Ga2,GM2}. 
We only consider the situation where these operators act on the Hilbert 
space $X = L^2(\R^2,G^{-1}\dd\xi)$, equipped  with the scalar product 
\eqref{Xscalar}, but similar results in larger function spaces can be found 
in \cite{GW2,GM2}. Our goal is to present the optimal resolvent 
bounds obtained by Li, Wei, and Zhang \cite{LWZ} for the linearized operator 
$\cL - \alpha\Lambda$ in the fast rotation limit $|\alpha| \to +\infty$.
These estimates will serve as a basis for all developments in 
Sections~\ref{sec3} and~\ref{sec4}. 

\subsection{Fundamental properties of $\cL$ and $\Lambda$}
\label{subsec2.1}
The first observation is that the operator $\cL$ is {\em selfadjoint} 
in the space $X$, with compact resolvent and purely discrete 
spectrum
\begin{equation}\label{cLspec}
  \sigma(\cL) \,=\, \Bigl\{-\frac{n}{2}\,\Big|\, n = 0,1,2,\dots\Bigr\}\,.
\end{equation}
Indeed, a formal calculation shows that $\cL$ is conjugated to the Hamiltonian 
of the harmonic oscillator in $\R^2$\: 
\begin{equation}\label{Lconj}
  L \,:=\, G^{-1/2} \,\cL ~G^{1/2} \,=\, \Delta \,-\, \frac{|\xi|^2}{16}
  \,+\, \frac12\,.
\end{equation}
As is well known (see e.g. \cite{He}), the operator $L$, when defined on its 
maximal domain, is selfadjoint in $L^2(\R^2)$ with compact resolvent and 
spectrum given by \eqref{cLspec}. This proves the desired properties of the 
operator $\cL$ in $X = L^2(\R^2,G^{-1}\dd\xi)$, and we also obtain in 
this way the following characterization of its domain\:
\[
  D(\cL) \,=\, \bigl\{w \in X \,|\, \cL w \in X\bigr\} \,=\, 
  \bigl\{w \in X \,|\, \Delta w \in X,~ (1{+}|\xi|)\nabla w \in X,~ 
   (1{+}|\xi|)^2 w \in X\bigr\}\,.
\]
Concerning the eigenproperties of $\cL$, we mention that the kernel
$\ker(\cL)$ is the one-dimensional subspace spanned by the Gaussian
vorticity profile $G$, and is orthogonal in $X$ to the hyperplane
$X_0$ defined by \eqref{X0def}. The second eigenvalue $-1/2$ has 
multiplicity two, with eigenfunctions given by the first order derivatives 
$\partial_i G = -\frac12 \xi_i G$ for $i = 1,2$, and for later use 
we observe that the orthogonal complement of the spectral subspace 
spanned by $G,\partial_1 G,\partial_2 G$ is precisely the 
subspace $X_1$ defined by \eqref{X1def}. More generally, for any
$k \in \N$, the eigenvalue $-k/2$ has multiplicity $k+1$ and the
corresponding eigenspace is spanned by Hermite functions of order $k$,
namely homogeneous $k^{\rm th}$ order derivatives of the Gaussian
profile $G$ \cite{GW1}. 

The second key observation is that the operator $\Lambda$ is a 
{\em relatively compact} perturbation of $\cL$, which is 
{\em skew-symmetric} in $X$\:
\begin{equation}\label{Lamskew}
  \langle \Lambda w_1\,,\,w_2\rangle_X + \langle w_1\,,\,\Lambda 
  w_2\rangle_X \,=\, 0~, \qquad \hbox{for all } w_1,w_2 \in 
  D(\cL)\,.
\end{equation}
Indeed, if $\Lambda$ is decomposed as in \eqref{Lambdadef}, 
the advection term $\Lamad = v^G \cdot \nabla$ is a first order 
operator with smooth coefficients that decay to zero at infinity, hence is a
relatively compact perturbation of the second order elliptic operator
$\cL$. Similarly, it is straightforward to verify that the nonlocal operator
$\Lamnl$ in \eqref{Lambdadef} is compact in $X$, hence also 
relatively compact with respect to $\cL$. On the other hand, since the 
velocity field $v^G$ is divergence-free and satisfies $\xi \cdot v^G(\xi) 
= 0$ for all $\xi \in \R^2$, it is clear that $\div(G^{-1}v^G) = 0$, and this 
implies that the operator $\Lamad$ is skew-symmetric in $X$. The 
corresponding property for $\Lamnl$ can be established by a direct 
calculation, which takes into account the structure of the Biot-Savart kernel, 
see \cite{GW2,Ga2}. In fact, it is shown in \cite{Ma1} that the operator 
$\Lambda$ is even {\em skew-adjoint}  in $X$ if it is defined on its 
maximal domain $D(\Lambda) = \{w \in X \,|\, \Lambda w \in X\}$. 

Another useful result of \cite{Ma1} is the following characterization 
of the kernel of $\Lambda$ in $X$\:
\begin{equation}\label{KerLambda}
  \ker(\Lambda) \,=\, Y_0 \oplus \bigl\{\beta_1 \partial_1 G + \beta_2 
  \partial_2 G\,|\, \beta_1,\beta_2 \in \R\bigr\}\,,
\end{equation}
where $Y_0 \subset X$ denotes the subspace of all radially symmetric
functions. Indeed, it is clear by symmetry that $\Lambda$ vanishes on
any radially symmetric function in $X$, so that $Y_0 \subset \ker(\Lambda)$. 
Moreover, if we differentiate the identity $v^G\cdot\nabla G = (K_{BS} * G)
\cdot \nabla G = 0$ with respect to $\xi_1$ and $\xi_2$, we see that 
$\Lambda(\partial_i G) = 0$ for $i = 1,2$. Thus $\ker(\Lambda)$ contains 
the right-hand side of \eqref{KerLambda}, and the converse inclusion is 
established in \cite{Ma1} using the decomposition presented in 
Section~\ref{subsec2.4} below and the explicit form of the one-dimensional 
operator $\Lambda_n$ in \eqref{Lambdandef}, see also \cite{Ga2}. 

\subsection{Dissipativity and linear stability}
\label{subsec2.2}
We recall that an operator $A : D(A) \to X$ is {\sl dissipative} if 
$\Re\,\langle Aw,w\rangle_X \le 0$ for all $w \in D(A)$, or equivalently 
if $\|(\lambda - A)w\| \ge \lambda \|w\|$ for all $w \in D(A)$ and 
all $\lambda > 0$ \cite{Pa}. The operator $A$ is called $m$-dissipative
if in addition any $\lambda > 0$ belongs to the resolvent set of $A$ 
\cite{Ka}. By the Lumer-Phillips theorem, an operator $A$ is $m$-dissipative
if and only if it generates a strongly continuous semigroup of contractions 
in $X$ \cite{Pa}. 

The properties collected in Section~\ref{subsec2.1} readily imply the 
following important result\:

\begin{prop}\label{prop:dissip} {\bf \cite{GW2,Ga2}} For any 
$\alpha \in \R$\: \\[1mm]
a) the operator $\cL - \alpha \Lambda$ is $m$-dissipative in $X$; \\[1mm]
b) the operator $\cL - \alpha \Lambda + \frac12$ is $m$-dissipative 
   in $X_0$; \\[1mm]
c) the operator $\cL - \alpha \Lambda + 1$ is $m$-dissipative in $X_1$.
\end{prop}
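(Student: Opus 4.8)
The three assertions have the same structure, and the plan is to derive them from the two fundamental properties recalled in Section~\ref{subsec2.1}: the operator $\cL$ is selfadjoint with $\sigma(\cL)\subset(-\infty,0]$, and $\Lambda$ is a skew-symmetric, relatively compact perturbation of $\cL$. I would first establish \emph{dissipativity}. For $w\in D(\cL)$ the quantity $\langle\cL w,w\rangle_X$ is real and nonpositive since $\sup\sigma(\cL)=0$, while $\langle\Lambda w,w\rangle_X$ is purely imaginary by the skew-symmetry relation \eqref{Lamskew}; hence $\Re\langle(\cL-\alpha\Lambda)w,w\rangle_X=\langle\cL w,w\rangle_X\le 0$, so $\cL-\alpha\Lambda$ is dissipative in $X$. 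For parts~(b) and~(c) I would use that $\cL$ and $\Lambda$ both leave the closed subspaces $X_0$ and $X_1$ invariant. For $\cL$ this is immediate, as it is selfadjoint with $X_0=\ker(\cL)^\perp$ and $X_1$ equal to the orthogonal complement of the span of the eigenfunctions $G,\partial_1G,\partial_2G$ associated with the eigenvalues $0,-\frac12,-\frac12$. For $\Lambda$ one checks that it maps $X$ into $X_1\subset X_0$: since $v^G$ and $K_{BS}*w$ are divergence-free we have $\Lamad w=\div(v^G w)$ and $\Lamnl w=\div\bigl((K_{BS}*w)\,G\bigr)$, whence $\int_{\R^2}\Lambda w\dd\xi=0$; and integrating by parts once more, $\int_{\R^2}\xi_i\,\Lambda w\dd\xi=-\int_{\R^2}v^G_i\,w\dd\xi-\int_{\R^2}(K_{BS}*w)_i\,G\dd\xi=0$, because the antisymmetry of the Biot--Savart kernel gives $\int_{\R^2}(K_{BS}*w)_i\,G\dd\xi=-\int_{\R^2}(K_{BS}*G)_i\,w\dd\xi=-\int_{\R^2}v^G_i\,w\dd\xi$. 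On $X_0$ the restriction of $\cL$ has spectrum $\{-n/2\,:\,n\ge1\}$, so $\langle\cL w,w\rangle_X\le-\frac12\|w\|_X^2$, which together with the skew-symmetry of $\Lambda$ shows that $\cL-\alpha\Lambda+\frac12$ is dissipative in $X_0$; likewise $\cL|_{X_1}\le-1$ gives dissipativity of $\cL-\alpha\Lambda+1$ in $X_1$.

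It remains to upgrade dissipativity to $m$-dissipativity, that is, to prove that $\lambda-(\cL-\alpha\Lambda)$ is onto $X$ for some $\lambda>0$, and similarly for the shifted operators in $X_0$ and $X_1$; by the Lumer--Phillips theorem this is enough. This is the point where the \emph{relative compactness} of $\Lambda$ is used. Fixing $\lambda>0$ (so that $\lambda\in\rho(\cL)$), the resolvent $R_\lambda=(\lambda-\cL)^{-1}$ is bounded, and for $w\in D(\cL)$ one has the factorization
\[
  \bigl(\lambda-\cL+\alpha\Lambda\bigr)w\,=\,\bigl(I+\alpha\,\Lambda R_\lambda\bigr)(\lambda-\cL)w\,.
\]
Since $\Lambda R_\lambda$ is compact, $I+\alpha\Lambda R_\lambda$ is Fredholm of index zero, hence invertible as soon as it is injective; and if $(I+\alpha\Lambda R_\lambda)f=0$ then $w:=R_\lambda f\in D(\cL)$ satisfies $(\lambda-\cL+\alpha\Lambda)w=0$, so that taking the real part of the scalar product with $w$ yields $\lambda\|w\|_X^2=\langle\cL w,w\rangle_X\le0$, whence $w=f=0$. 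Thus $\lambda-(\cL-\alpha\Lambda)$ is a bijection from $D(\cL)$ onto $X$, and $\cL-\alpha\Lambda$ is $m$-dissipative in $X$. The arguments for (b) and (c) are the same, run inside the invariant Hilbert spaces $X_0$ and $X_1$, using that the restriction of $\cL$ still has compact resolvent there, that $\Lambda$ remains relatively compact with respect to it, and the spectral gaps recorded above (which is also what makes the shifted operators dissipative). Alternatively, the surjectivity step can be replaced by a citation: a relatively compact perturbation of a closed operator has relative bound zero, so since $-\alpha\Lambda$ is dissipative — in fact skew-adjoint on its maximal domain, by \cite{Ma1} — the classical perturbation theorem for $m$-dissipative operators applies directly on $X$, $X_0$ and $X_1$.

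The only step requiring real attention is this passage from dissipativity to $m$-dissipativity, and I do not anticipate a genuine obstacle: it is a standard combination of the Lumer--Phillips theorem with Fredholm theory (or with the perturbation theory of $m$-dissipative operators), made to work by the relative compactness of $\Lambda$. The remaining ingredients — that $\Lambda$ maps $X$ into $X_0$ and $X_1$, and the spectral gaps of $\cL$ on these subspaces — involve only short and routine computations.
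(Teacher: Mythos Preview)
Your proposal is correct and follows precisely the line the paper has in mind: the text states that Proposition~\ref{prop:dissip} ``readily'' follows from the properties collected in Section~\ref{subsec2.1} (selfadjointness and spectral gap of $\cL$, skew-symmetry and relative compactness of $\Lambda$), and you have written out exactly that argument, including the Fredholm/Lumer--Phillips step for $m$-dissipativity and the verification that $X_0,X_1$ are invariant. There is nothing to add.
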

 
Proposition~\ref{prop:dissip} shows in particular that the Oseen
vortex $\alpha G$ is a {\em linearly stable} equilibrium of the
rescaled vorticity equation \eqref{weq}, for any value of the
circulation Reynolds number $\alpha \in \R$. In addition, if we
restrict ourselves (without loss of generality) to perturbations in
the invariant subspace $X_0$, the linearized operator $\cL - \alpha 
\Lambda$ has a {\em uniform spectral gap} (of size $1/2$) for all 
$\alpha \in \R$. As the nonlinearity in \eqref{tildeweq} does not 
involve the parameter $\alpha$, this implies a uniform lower bound
on the size of the (immediate) basin of attraction of the vortex, 
as asserted in Proposition~\ref{prop:locstab}. In the invariant 
subspace $X_1 \subset X_0$, the spectral gap is even larger (of size $1$), 
and the perturbations therefore decay to zero like $e^{-\tau}$ as 
$\tau \to +\infty$. More details can be found in Section~\ref{subsec5.1}, 
which contains in particular a short proof of Proposition~\ref{prop:locstab}. 

\subsection{Enhanced dissipation for large circulation $\alpha$}
\label{subsec2.3}
The main purpose of the present paper is to investigate what can be
said beyond Proposition~\ref{prop:dissip}, using the enhanced
dissipation properties of the linearized operator $\cL - \alpha\Lambda$
for large values of $\alpha$. To do that, it is obviously necessary to
restrict ourselves to perturbations in the orthogonal complement
$\ker(\Lambda)^\perp$, because on $\ker(\Lambda)$ the linearized
operator $\cL - \alpha \Lambda$ reduces to $\cL$ and does not depend
on $\alpha$. Using \eqref{KerLambda} is it easy to verify that the
subspace $\ker(\Lambda)^\perp$ is invariant under the actions of both
$\cL$ and $\Lambda$, and that $\ker(\Lambda)^\perp \subset X_1$ where
$X_1$ is defined in \eqref{X1def}. Proposition~\ref{prop:dissip} thus 
shows that  $\cL - \alpha \Lambda + 1$ is $m$-dissipative in 
$\ker(\Lambda)^\perp$ for all $\alpha \in \R$, but much more is known 
for large values of $|\alpha|$. The following resolvent estimate 
is the main result of the paper by Li, Wei, and Zhang\:

\begin{prop}\label{prop:LWZ} {\bf \cite{LWZ}} There exist 
positive constants $c_1, c_2$ such that, for all $\alpha \in \R$,  
\begin{equation}\label{LWZest}
  c_1 (1+|\alpha|)^{-1/3} \,\le\, \sup_{\lambda \in \R} 
  \|(\cL - \alpha \Lambda - i\lambda)^{-1}\|_{X_\perp \to 
  X_\perp} \,\le\, c_2 (1+|\alpha|)^{-1/3}\,,
\end{equation}
where $X_\perp = \ker(\Lambda)^\perp \subset X$.  
\end{prop}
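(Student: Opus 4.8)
\medskip\noindent\textbf{Proof proposal.} The plan is to use the rotational symmetry to reduce \eqref{LWZest} to a family of one-dimensional resolvent estimates. Since $\cL$ and $\Lambda$ commute with rotations about the origin, the space $X$ decomposes as an orthogonal sum $X = \bigoplus_{n\in\Z}X_n$ of invariant subspaces, where $X_n$ consists of the functions of the form $f(r)\,e^{in\theta}$ in polar coordinates $\xi = (r\cos\theta, r\sin\theta)$. On $X_n$, the operator $\cL - \alpha\Lambda$ becomes a one-dimensional operator of the form $\cL_n - i\alpha n\,\Omega(r) + \alpha K_n$, where $\Omega(r) = \frac{1-e^{-r^2/4}}{2\pi r^2}$ is the angular velocity of Oseen's vortex (so that the advection term $\Lamad = v^G\cdot\nabla$ acts on $X_n$ as multiplication by $in\,\Omega(r)$), $\cL_n$ is the radial part of $\cL$ --- whose conjugate by $G^{1/2}$ is $\partial_r^2 + \frac1r\partial_r - \frac{n^2}{r^2} - \frac{r^2}{16} + \frac12$ --- and $\alpha K_n$ is the skew-symmetric, relatively compact contribution of the nonlocal term $\Lamnl$. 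By \eqref{KerLambda}, $\ker(\Lambda)$ is contained in $X_0 \oplus X_1 \oplus X_{-1}$, so on $X_\perp$ one only needs the modes $|n| \ge 2$ together with the part of $X_1\oplus X_{-1}$ orthogonal to $\partial_1 G$ and $\partial_2 G$; in particular $n = 0$ is excluded and the reduced operators genuinely feel the parameter $\alpha$.

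The heart of the argument is a one-dimensional resolvent estimate, uniform in $n$ and in $\lambda\in\R$: one must prove $\|(\cL_n - i\alpha n\Omega - i\lambda + \alpha K_n)^{-1}\|_{X_n\to X_n} \le c_2(1+|\alpha|)^{-1/3}$. I would write the conjugate of that operator as $-(D_n + iV)$, where $D_n \ge 0$ is the symmetric part --- containing $-\partial_r^2 - \frac1r\partial_r$, the centrifugal barrier $n^2/r^2$, and, crucially, the confining potential $r^2/16$ produced by the Gaussian weight $G^{-1}$ --- and $V(r) = \alpha n\,\Omega(r) + \lambda$ is the real oscillating part. Testing the resolvent equation $(D_n + iV)w = f$ against $\bar w$ gives the two basic identities $\langle D_n w, w\rangle = \Re\langle f,w\rangle$ and $\int V|w|^2 = \Im\langle f,w\rangle$, so $w$ is small in the $D_n$-energy and $|w|^2$ is balanced about the critical layer $\Sigma_\lambda = \{r : V(r) = 0\}$. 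A standard localization around $\Sigma_\lambda$, combined with the monotonicity of $\Omega$, then produces in the bulk the Airy-type bound $\|(D_n + iV)^{-1}\| \lesssim (|\alpha|\,|n|\,|\Omega'(r_*)|)^{-2/3}$ when $\Sigma_\lambda$ sits at a radius $r_*$ with $\Omega'(r_*) \neq 0$; in particular $\lesssim (|\alpha|\,|n|)^{-2/3} \le |\alpha|^{-2/3}$ wherever $|\Omega'|$ is of order one, which is far better than required.

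The delicate point --- and the reason the exponent is $1/3$ rather than something smaller --- is that $\Omega'(r)$ vanishes both at $r = 0$ and as $r\to\infty$, so the bulk estimate degrades wherever $\Sigma_\lambda$ approaches one of these points. Near the origin, $r = 0$ is a \emph{nondegenerate} critical point of $\Omega$ (one has $\Omega''(0)\neq 0$) and the centrifugal barrier $n^2/r^2$ in $D_n$ keeps the relevant mass away from $r=0$; a complex harmonic-oscillator scaling then shows this region contributes no worse than $\cO((1+|\alpha|)^{-1/2})$. Near infinity, the confining potential $r^2/16$ in $D_n$ yields $\|(D_n + iV)^{-1}\| \lesssim r_*^{-2}$ when $\Sigma_\lambda$ sits at radius $r_*$, while the Airy bound there reads $\|(D_n + iV)^{-1}\| \lesssim r_*^{2}\,|\alpha|^{-2/3}$ since $|\Omega'(r_*)| \sim r_*^{-3}$; these two are comparable at $r_* \sim |\alpha|^{1/6}$ --- which happens for $\lambda \sim -|\alpha|^{2/3}$ --- and there the resolvent norm is $\sim |\alpha|^{-1/3}$. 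Patching the bulk, near-origin and far-field bounds with a partition of unity, and absorbing $\alpha K_n$ as a lower-order perturbation (it is compact, and contributes to the critical layer only at subleading order), gives the estimate uniformly in $n$; since the subspaces $X_n$ are mutually orthogonal, the supremum over $n$ is precisely the $X_\perp\to X_\perp$ operator norm, which is the upper bound in \eqref{LWZest}.

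For the lower bound I would fix one mode in $X_\perp$, say $n = 1$, choose $\lambda \sim -|\alpha|^{2/3}$ so that $\Sigma_\lambda$ sits at the balancing radius $r_* \sim |\alpha|^{1/6}$, and build a quasimode $w_\alpha$ concentrated in the layer of width $\sim |\alpha|^{-1/6}$ from a rescaled complex Airy function, truncated into $X_\perp$, satisfying $\|(\cL - \alpha\Lambda - i\lambda)w_\alpha\| \le C|\alpha|^{1/3}\|w_\alpha\|$; this forces $\|(\cL - \alpha\Lambda - i\lambda)^{-1}\|_{X_\perp\to X_\perp} \ge c_1|\alpha|^{-1/3}$. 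I expect the main obstacle to be the \emph{uniform-in-$n$} treatment of the transition region between the differential-rotation-dominated bulk and the weight-dominated far field: a naive patching of the local estimates loses a logarithm or a power of $|\alpha|$, and capturing the sharp exponent requires a carefully designed global weight or multiplier (or a Langer-type turning-point analysis) that handles simultaneously the two degeneracies of $\Omega$ and the confinement coming from the Gaussian weight. This is the technical core of \cite{LWZ}.
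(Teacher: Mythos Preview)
Your overall strategy --- Fourier decomposition in the angular variable, reduction to the one-dimensional operators $H_{n,\beta} = -\cL_n + i\beta M_n$ with $\beta = n\alpha$, and the heuristic balance at $r_* \sim |\alpha|^{1/6}$ producing the exponent $1/3$ --- matches exactly what the paper lays out in Sections~\ref{subsec2.4} and~\ref{subsec2.5}. The paper does not prove the proposition in full but reduces it to the one-dimensional resolvent bound of Proposition~\ref{prop:LWZ3}, citing \cite{LWZ}; your sketch of the one-dimensional argument (Airy scaling near the critical layer, degeneracies of $\Omega'$ at $0$ and $\infty$, confinement from the Gaussian weight) is consistent with the heuristics in Section~\ref{subsec2.5}.

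There is, however, a genuine gap in how you handle the nonlocal term. You propose to absorb $\alpha K_n$ as ``a lower-order perturbation (it is compact, and contributes to the critical layer only at subleading order)''. Compactness is a fixed-$\alpha$ statement and buys nothing as $|\alpha|\to\infty$: the perturbation $\alpha K_n$ grows linearly in $|\alpha|$, and one must show it is genuinely subordinate to the local advection $i\alpha n\,\Omega$. According to the paper's discussion in Section~\ref{subsec2.5}, this is precisely where the difficulty lies. The perturbative approach (treating $\Lamnl$ as dominated by $\Lamad$) works for $|n|\ge 2$ --- Deng \cite{De2} had it for large $|n|$, and \cite{LWZ} closed the gap down to $|n|=2$ --- but it \emph{fails} for $n=\pm 1$, where the kernel element $rg$ obstructs any naive perturbation argument. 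For those modes \cite{LWZ} use a completely different device: a transformation ``inspired from scattering theory'' that removes the nonlocal term altogether. So the obstacle you flag (uniform-in-$n$ patching of the local Airy estimates in the far field) is not the one the paper singles out; the nonlocal term at small $|n|$, and especially the scattering trick at $n=\pm 1$, is the technical heart of \cite{LWZ}.
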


\begin{rem}\label{normrem}
Here and what follows, if $Y$ is a Banach space, we denote by 
$\|B\|_{Y \to Y}$ the operator norm of any bounded linear map 
$B : Y \to Y$. 
\end{rem}

Since $\Lambda$ is a relatively compact perturbation of the operator
$\cL$, which itself has compact resolvent, it is clear that the
linearized operator $\cL - \alpha\Lambda$ has compact resolvent in $X$
for any $\alpha \in \R$. In particular, the spectrum
$\sigma(\cL - \alpha\Lambda)$ is a sequence of complex eigenvalues
$\lambda_k(\alpha)$, where $k \in \N$, and it is not difficult to
verify that $\Re(\lambda_k(\alpha)) \to -\infty$ as $k \to \infty$.
Moreover, Proposition~\ref{prop:dissip} shows that
$\Re \lambda_k(\alpha) \le 0$ for all $k \in \N$, and that
$\Re \lambda_k(\alpha) \le -1$ if we only consider eigenvalues
corresponding to eigenfunctions in the invariant subspace $X_1$.
Again, much more is known if we restrict ourselves to the smaller
subspace $\ker(\Lambda)^\perp \subset X_1$. To formulate that, we
define for any $\alpha \in \R$ the spectral lower bound
\begin{equation}\label{Sigmadef}
  \Sigma(\alpha) \,=\, \inf\bigl\{\Re(z)\,\big|\, z \in \sigma(
  -\cL_\perp + \alpha \Lambda_\perp) \bigr\}\,,
\end{equation}
where $\cL_\perp, \Lambda_\perp$ denote the restrictions of $\cL,\Lambda$ 
to $X_\perp = \ker(\Lambda)^\perp$. Then Proposition~\ref{prop:LWZ} implies 
that $\Sigma(\alpha) \ge c_2^{-1} (1+|\alpha|)^{1/3}$ for all $\alpha 
\in \R$, because for any linear operator $A$ in $X$ one has the inequality
\begin{equation}\label{resnorm}
  \|(A - z)^{-1}\| \,\ge\, \frac{1}{\dist(z,\sigma(A))}\,,
  \qquad \hbox{for all } z \in \C \setminus \sigma(A)\,.
\end{equation}
In fact, another result of Li, Wei, and Zhang provides an improved 
lower bound on $\Sigma(\alpha)$\:

\begin{prop}\label{prop:LWZ2} {\bf \cite{LWZ}} There exists a 
positive constant $c_3$ such that $\Sigma(\alpha) \ge c_3 
(1+|\alpha|)^{1/2}$ for all $\alpha \in \R$. 
\end{prop}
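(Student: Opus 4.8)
The plan is to reduce the spectral lower bound to a family of one–dimensional problems, one for each angular Fourier mode, and then to locate the bottom of the spectrum of each one–dimensional operator by a semiclassical analysis, the exponent $1/2$ coming from the behaviour of the angular velocity of Oseen's vortex near its critical points. (Since Proposition~\ref{prop:LWZ2} is due to \cite{LWZ}, one may of course just quote it; what follows is how one would establish it directly.) Because $\cL$ and $\Lambda$ commute with rotations, $X_\perp = \ker(\Lambda)^\perp$ splits as $\bigoplus_{|n|\ge 1} X_\perp^{(n)}$ into angular modes, and on each $X_\perp^{(n)}$ the operator $-\cL_\perp + \alpha\Lambda_\perp$ reduces to a one–dimensional operator $\cH_{n,\alpha} = -\cL_n + \alpha\Lambda_n$ acting on a weighted $L^2$ space of functions of $r = |\xi| \in (0,\infty)$; here $X_\perp^{(n)} = X^{(n)}$ for $|n|\ge 2$, while for $n = \pm 1$ one removes the one–dimensional kernel of $\Lambda_1$ spanned by the radial profile $r\mapsto r\,e^{-r^2/4}$ (coming from $\partial_1 G$ and $\partial_2 G$). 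Since the spectrum of the block operator is governed by its blocks, one has $\Sigma(\alpha) = \inf_{|n|\ge 1}\inf\Re\,\sigma(\cH_{n,\alpha})$, so it suffices to prove $\inf\Re\,\sigma(\cH_{n,\alpha}) \gtrsim (1+|n\alpha|)^{1/2}$ with a constant independent of $n$, which indeed yields $\Sigma(\alpha)\gtrsim (1+|\alpha|)^{1/2}$.

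Next I would fix an eigenpair $(z,w)$ of $\cH_{n,\alpha}$ with $\|w\|=1$, write $z = \mu + i\tau$, and recall from Proposition~\ref{prop:dissip} that $\mu \ge 1$. Arguing by contradiction, assume $\mu < \delta(1+|n\alpha|)^{1/2}$ for a small constant $\delta$ to be fixed. Taking the real part of $\langle\cH_{n,\alpha}w,w\rangle = z$ gives the exact identity $\langle -\cL_n w,w\rangle = \mu$ (the skew-symmetry of $\Lambda$, hence of both $\Lamad$ and $\Lamnl$, kills the $\alpha$–term), and after conjugation by $G^{1/2}$, cf.\ \eqref{Lconj}, this controls a weighted $H^1$–type quantity involving $\partial_r w$, the centrifugal term $\tfrac{n}{r}w$, and the Gaussian weight $\tfrac{r}{4}w$. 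On the other hand $\alpha\Lambda_n = i n\alpha\,\Omega(r) + \alpha(\Lamnl)_n$, where $\Omega(r) = \frac{1}{2\pi r^2}\bigl(1 - e^{-r^2/4}\bigr)$ is the angular velocity of the Oseen vortex and $(\Lamnl)_n$, the nonlocal part, is handled as in \cite{GW2,Ma1,Ga2} (cf.\ \eqref{Lambdandef}). Writing the eigenvalue equation in the conjugated form $-L_n w = \bigl(\mu + i\tau - i n\alpha\,\Omega(r) - \alpha(\widetilde\Lamnl)_n\bigr)w$ and testing against $e^{2\varphi}\bar w$ with a suitably chosen (complex) WKB weight $\varphi$ — so that the imaginary potential $i n\alpha\Omega$ is tilted into the real part — I would derive a quantitative localization: up to exponentially small tails, $w$ is $L^2$–concentrated on the set $\{r : |n\alpha\,\Omega(r) - \tau| \lesssim \mu\}$, whose length is $\lesssim \mu/|n\alpha|$ near a regular value of $\Omega$ and $\lesssim (\mu/|n\alpha|)^{1/2}$ near the critical value $\Omega(0) = \frac{1}{8\pi}$.

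Finally I would check that $w$ cannot concentrate anywhere without making $\langle -\cL_n w,w\rangle$ too large. If $\tau/(n\alpha)\notin[0,\Omega(0)]$, then $|n\alpha\,\Omega(r)-\tau|\ge c|n\alpha|$ everywhere, so $\mu\gtrsim|n\alpha|$, a contradiction. If $\tau/(n\alpha)$ is a regular value of $\Omega$, the local model near the corresponding radius $r_*$ is the complex Airy operator $-\partial_r^2 + i n\alpha\,\Omega'(r_*)(r-r_*)$, which has no $L^2$ eigenfunctions on $\R$; a WKB/Agmon argument then forces a genuine eigenfunction to carry a radial derivative of size $\gtrsim|n\alpha|^{1/3}$ on a set of length $\gtrsim|n\alpha|^{-1/3}$, whence $\mu\gtrsim|n\alpha|^{2/3}$, again a contradiction. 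Hence $w$ must concentrate near the only critical point of $\Omega$, namely $r=0$ (the behaviour for large $r$ being controlled by the confining Gaussian weight $r^2/16$, which forbids concentration there when $\mu$ is small). Near $r=0$ one has $\Omega(r) = \Omega(0) - \tfrac{r^2}{64\pi} + O(r^4)$, so after subtracting the purely imaginary constant $i n\alpha\,\Omega(0)$ the local model is the complex Laguerre-type oscillator $-\partial_r^2 + \tfrac{n^2}{r^2} + \bigl(\tfrac1{16} - i\tfrac{n\alpha}{64\pi}\bigr)r^2$ on the half–line, whose lowest eigenvalue has real part $\gtrsim |n|^{3/2}|\alpha|^{1/2} \gtrsim (1+|n\alpha|)^{1/2}$. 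Choosing $\delta$ small enough contradicts $\mu < \delta(1+|n\alpha|)^{1/2}$, which proves the claim.

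The main obstacle is clearly the last two steps: turning the heuristic ``low-lying eigenfunctions localize at the critical points of $\Omega$'' into rigorous, $n$–uniform estimates. In particular one must handle simultaneously the three competing mechanisms near $r=0$ — the imaginary potential $n\alpha\Omega$, the centrifugal barrier $n^2/r^2$, and the Gaussian confinement $r^2/16$ — together with the analogous competition for large $r$, absorb the nonlocal term $\alpha(\Lamnl)_n$ throughout, and make the complex-WKB localization of the second step quantitatively compatible with the $H^1$ bound coming from the real part of the eigenvalue equation.
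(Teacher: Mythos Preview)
The paper does not prove Proposition~\ref{prop:LWZ2}: it is stated with attribution to \cite{LWZ} and is used as a black box (see also Remark~\ref{rem:abscissa}, where the spectral lower bound is referred to as \cite[Theorem~6.1]{LWZ}). So strictly speaking the paper's ``proof'' is a citation, and your opening parenthetical remark already matches it.

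Your sketch goes well beyond what the paper does, and the overall strategy --- Fourier reduction to the one-dimensional operators $H_{n,\beta}$, the identity $\Re z = \langle -\cL_n w,w\rangle$ from skew-symmetry, localization of a hypothetical low-lying eigenfunction near critical points of the angular velocity $\phi$, and the complex harmonic oscillator model $-\partial_r^2 + n^2/r^2 + (\tfrac{1}{16} - i\tfrac{n\alpha}{64\pi})r^2$ near $r=0$ yielding the exponent $1/2$ --- is exactly the semiclassical heuristic explained for the simplified model in \cite{GGN} and alluded to in Section~\ref{subsec2.5}. The computation $\phi(0)=1/(8\pi)$, $\phi''(0)<0$, and the scaling of the Laguerre eigenvalues are correct.

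The gap you yourself flag is genuine and is precisely where the difficulty of \cite{LWZ} lies. Treating the nonlocal piece $g\,\Omega_n[\,\cdot\,]$ as a perturbation of the multiplication operator $\phi$ works for $|n|\ge 2$ (roughly because $\Omega_n$ gains a factor $|n|^{-1}$), but it \emph{fails} for $n=\pm 1$: on those sectors $M_n$ has a nontrivial kernel, so the nonlocal term is not subordinate to the local one in any uniform sense. As the paper notes in Section~\ref{subsec2.5}, the authors of \cite{LWZ} handle $n=\pm 1$ by a separate argument --- a conjugation ``inspired from scattering theory'' that eliminates the nonlocal term entirely --- rather than by perturbation. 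Your sketch does not indicate any mechanism for this case, and without it the argument does not close. A secondary point: your regular-value branch gives $\mu \gtrsim |n\alpha|^{2/3}$, which is stronger than needed but relies on an Airy-type exclusion that again must be made robust against the nonlocal correction; in \cite{LWZ} this is done, but it is not automatic.
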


According to Proposition~\ref{prop:LWZ2}, the eigenvalues 
$\lambda_k(\alpha)$ of the linearized operator $\cL - \alpha\Lambda$ 
are either independent of $\alpha$, because the corresponding 
eigenfunctions lie in the kernel of $\Lambda$, or have real parts
that converge to $-\infty$ at least as fast as $-|\alpha|^{1/2}$ 
when $|\alpha| \to \infty$. This is in full agreement with 
the numerical calculations of Prochazka and  Pullin \cite{PP1,PP2}, 
which indicate that the rate $\cO(|\alpha|^{1/2})$ is indeed
optimal. Note also that the spectral lower bound $\Sigma(\alpha)$ 
is much larger, when $|\alpha| \gg 1$, than what can be predicted 
from the pseudospectral estimate \eqref{LWZest}, and this is due
to the fact that the linearized operator $\cL - \alpha\Lambda$ 
is highly non-selfadjoint in that regime. Indeed, it is well-known 
that equality holds in \eqref{resnorm} if $A$ is a selfadjoint
(or normal) operator in $X$ \cite{Ka}. 

\subsection{Fourier decomposition and reduction to 
one-dimensional operators}\label{subsec2.4}
For later use, we briefly describe one important step in the proof of 
Proposition~\ref{prop:LWZ}. Our starting point is the observation that 
both operators $\cL$ and $\Lambda$ are invariant under rotations
about the origin in $\R^2$. To fully exploit this symmetry, it is
useful to introduce polar coordinates $(r,\theta)$ in the plane and to
expand the vorticity and the velocity field in Fourier series with
respect to the angular variable $\theta \in \mathbb{S}^1$. In this
way, our function space $X$ is decomposed into a direct sum\:
\begin{equation}\label{Xdecomp}
   X \,=\, \mathop{\oplus}\limits_{n \in \Z} Y_n\,,
\end{equation}
where $Y_n = \{w \in X \,|\, e^{-in\theta} w\hbox{ is radially symmetric}\}$. 
The crucial point is that, for each $n \in \Z$, the closed subspace 
$Y_n$ is invariant under the action of both linear operators $\cL$ and 
$\Lambda$. As is shown in \cite{GW2}, the restriction $\cL_n$ of $\cL$ 
to $Y_n$ is the one-dimensional operator
\begin{equation}\label{cLndef}
   \cL_n \,=\, \partial_r^2 + \Bigl(\frac{r}{2}+\frac{1}{r}
   \Bigr)\partial_r + \Bigl(1 - \frac{n^2}{r^2}\Bigr)~, 
\end{equation}
which is defined on the positive half-line $\{r > 0\}$, with
homogeneous Dirichlet condition at the origin if $n = 0$ or $|n| \ge
2$, and homogeneous Neumann condition if $|n| \ge 1$.  Similarly,
the restriction $\Lambda_n$ of $\Lambda$ to $Y_n$ vanishes for
$n = 0$ and is given by
\begin{equation}\label{Lambdandef} 
  \Lambda_n w \,=\, in\bigl(\phi w - g \Omega_n[w]\bigr)\,, 
  \qquad\hbox{for } n \neq 0\,,
\end{equation}
where $\phi, g$ are the functions on $\R_+$ defined by
\begin{equation}\label{phigdef}
   \phi(r) \,=\, \frac{1}{2\pi r^2}(1-e^{-r^2/4})~, \qquad
   g(r) \,=\, \frac{1}{4\pi}\,e^{-r^2/4}~, \qquad r > 0~,
\end{equation}
and $\Omega_n = \Omega_n[w]$ is the unique solution of the differential 
equation $-\Omega_n'' -\frac1r \Omega_n' + \frac{n^2}{r^2}\Omega_n = w$ 
on $\R_+$ that is regular at the origin and at infinity, namely
\begin{equation}\label{Omndef}
   \Omega_n(r) \,=\, \frac{1}{4|n|}\left(\int_0^r \Bigl(\frac{r'}{r}
   \Bigr)^{|n|} r'w(r')\dd r' + \int_r^\infty \Bigl(\frac{r}{r'}
   \Bigr)^{|n|} r'w(r')\dd r'\right)~, \quad r > 0~.
\end{equation}

Thanks to the decomposition \eqref{Xdecomp}, to prove Proposition~\ref{prop:LWZ} 
it is sufficient to study the family of one-dimensional operators
\begin{equation}\label{Hndef}
  H_{n,\beta} \,=\, -\cL_n + \alpha \Lambda_n \,\equiv\, -\cL_n + i\beta M_n\,,
  \qquad n \neq 0\,, 
\end{equation}
where $\beta = n \alpha \in \R$ and $M_n w = \phi w - g \Omega_n[w]$. 
When $|n| \ge 2$, these operators act on the Hilbert space $Z = L^2(\R_+,
g^{-1}r\dd r)$, which is the analog of the original space $X$ in polar 
coordinates. When $n = \pm 1$, the operator $M_n$ has a one-dimensional 
kernel spanned by the function $rg$, because $\Omega_n[rg] = r\phi$ if 
$|n| = 1$. In that case, to obtain enhanced dissipation estimates, it is 
necessary to consider $H_{n,\beta}$ as acting on the orthogonal complement 
of the kernel, namely on the hyperplane
\begin{equation}\label{Z0def}
  Z_0 \,=\, \Bigl\{w \in Z \,\Big|\, \int_0^\infty r^2 w(r)\dd r = 0\Bigr\}\,.
\end{equation}
As in Section~\ref{subsec2.1} above, one can verify that the operator $\cL_n$ 
is selfadjoint in $Z$, with $\cL_n \ge |n|/2$ for any $n \in \Z$. Moreover, if 
$n = \pm 1$, then $\cL_n \ge 1$ on $Z_0$. Finally, the bounded operator
$M_n$ is symmetric in $Z$ for any $n \neq 0$. 

Proposition~\ref{prop:LWZ} is a direct consequence of the following
optimal resolvent estimate for the operator $H_{n,\beta}$.

\begin{prop}\label{prop:LWZ3} {\bf \cite{LWZ}} There exist 
positive constants $c_1, c_2$ such that, for any $\beta \in \R$ 
and any $n \in \Z$ with $|n| \ge 2$, the following estimate holds\:
\begin{equation}\label{LWZest2}
  c_1 (1+|\beta|)^{-1/3} \,\le\, \sup_{\lambda \in \R} 
  \|(H_{n,\beta} - i\lambda)^{-1}\|_{Z \to Z} \,\le\, 
  c_2 (1+|\beta|)^{-1/3}\,.
\end{equation}
Moreover, if $n = \pm 1$, we have the same estimate in the subspace $Z_0$\:
\begin{equation}\label{LWZest3}
  c_1 (1+|\beta|)^{-1/3} \,\le\, \sup_{\lambda \in \R} 
  \|(H_{n,\beta} - i\lambda)^{-1}\|_{Z_0 \to Z_0} \,\le\, 
  c_2 (1+|\beta|)^{-1/3}\,.
\end{equation}
\end{prop}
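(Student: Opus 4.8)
The plan is to prove the upper and lower bounds in \eqref{LWZest2} separately, the argument for \eqref{LWZest3} being identical once one passes to $Z_0$ to kill the one‑dimensional kernel $\R\,rg$ of $M_n$ that occurs when $n=\pm1$. As a normalization I would conjugate $\cL_n$ to Schrödinger form: the map $w\mapsto r^{1/2}g^{-1/2}w$ is unitary from $Z$ onto $L^2(\R_+,\dd r)$ and turns $-\cL_n$ into $-\partial_r^2+W_n$ with $W_n(r)=\tfrac{r^2}{16}+\tfrac{n^2-1/4}{r^2}+O(1)$, so that $-\cL_n\ge |n|/2$ (and $-\cL_n\ge 1$ on $Z_0$ when $n=\pm1$), while $M_n$ remains multiplication by $\phi$ plus the nonlocal term $-g\,\Omega_n$; recall that $\phi$ decreases from $\phi(0)=\tfrac1{8\pi}$ to $0$, with $\phi(0)-\phi(r)\sim cr^2$, $|\phi'(r)|\sim c'r$ as $r\to0$ and $\phi(r)\sim\tfrac1{2\pi r^2}$, $|\phi'(r)|\sim\tfrac1{\pi r^3}$ as $r\to\infty$, while $\Omega_n$ is bounded on $Z$ with $\|\Omega_n\|_{Z\to Z}=O(1/|n|)$ and $g$ has Gaussian decay. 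Since $\sigma(H_{n,\beta})\subset\{\Re z\ge |n|/2\}$, inequality \eqref{resnorm} already gives the crude bound $\|(H_{n,\beta}-i\lambda)^{-1}\|\le 2/|n|$, which disposes of bounded $\beta$; from now on $|\beta|$ is large, and by the symmetry $w\mapsto\overline{w}$ I take $\beta>0$.

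For the lower bound I would build a quasimode concentrated at the intermediate radius $r_*\sim\beta^{1/6}$, where the differential rotation is already degenerating ($|\phi'(r_*)|\sim\beta^{-1/2}$) but the Gaussian confinement has not yet taken over. With $\lambda:=\beta\phi(r_*)\sim\beta^{2/3}$ and $\ell:=(\beta|\phi'(r_*)|)^{-1/3}\sim\beta^{-1/6}$ one has $\ell\ll r_*$, the nonlocal term is super‑exponentially small on the window $|r-r_*|\lesssim\ell$, and the curvature of $\phi$ and the variation of $W_n$ are of lower order there, so on that window $H_{n,\beta}-i\lambda$ is well approximated by the complex Airy operator $-\partial_r^2+W_n(r_*)+i\beta\phi'(r_*)(r-r_*)$. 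Taking $u(r)=\chi(r)\exp(-c\,\beta^{1/3}(r-r_*)^2)$ with $\chi$ a cutoff to $[r_*/2,2r_*]$, every term of $(H_{n,\beta}-i\lambda)u$ is $O(\beta^{1/3}\|u\|)$, hence $\|(H_{n,\beta}-i\lambda)^{-1}\|\gtrsim\beta^{-1/3}$; since $u$ is supported away from the core the same construction yields the lower bound in \eqref{LWZest3}, and it is compatible with the crude bound $2/|n|$ precisely in the relevant range $|n|\lesssim\beta^{1/3}$.

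For the upper bound I would fix $\lambda\in\R$, put $u=(H_{n,\beta}-i\lambda)^{-1}f$, and combine the two energy identities obtained by testing the equation against $u$: the real part yields $\|u'\|^2+\int(1+r^2+n^2r^{-2})|u|^2\,\dd r\lesssim\|f\|\,\|u\|$ in the conjugated variables, and the imaginary part controls $\int(\beta\phi-\lambda)|u|^2\,\dd r$ modulo the real quantity $\beta\langle g\,\Omega_n[u],u\rangle$. If $\lambda/\beta$ lies outside, or a fixed distance from the endpoints of, the range $(0,\phi(0))$ of $\phi$, then $|\beta\phi-\lambda|\gtrsim\beta$ wherever $u$ is not already small, giving $\|u\|\lesssim\beta^{-1/2}\|f\|$. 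Otherwise there is a unique $r_\lambda$ with $\beta\phi(r_\lambda)=\lambda$, and I would split according to its location: (i) $r_\lambda$ near the origin, where the critical layer meets the centrifugal singularity and the uncertainty inequality $\|u\|^2\le\|u/r\|\,\|ru\|$ forces $\|u\|\lesssim\beta^{-1/2}\|f\|$; (ii) $r_\lambda$ of order one, the genuine bulk, where the classical complex‑Airy bound gives $\|u\|\lesssim\beta^{-2/3}\|f\|$; (iii) $r_\lambda$ large, where the confinement $W_n\gtrsim r^2$ must be played against the shrinking derivative $|\phi'(r_\lambda)|\sim r_\lambda^{-3}$ inside the critical layer — the worst balance occurring at $r_\lambda\sim\beta^{1/6}$ — to obtain $\|u\|\lesssim\beta^{-1/3}\|f\|$. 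Throughout, the nonlocal term $\beta\,g\,\Omega_n[u]$, supported up to Gaussian tails in $r\lesssim1$ where $\|\Omega_n\|=O(1/|n|)$ and $W_n$ is large, is absorbed into the remaining terms, the passage to $Z_0$ when $n=\pm1$ being precisely what makes that absorption possible.

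The main difficulty is case (iii): the critical layer is then pushed far from the origin, where the Gaussian weight of the space $X$ — equivalently the confining potential $W_n$ — is the only thing preventing the solution from hiding, and one must estimate the complex‑Airy‑type operator inside the layer jointly with that confinement, uniformly in $n$ and $\beta$, and verify that the outcome is no worse than the $\beta^{-1/3}$ produced by the quasimode, i.e.\ that the supremum over $\lambda$ is attained by solutions concentrating near $r\sim\beta^{1/6}$. Producing the right multipliers (or, equivalently, a WKB/Airy‑function parametrix) that glue the bulk estimate to the confinement‑dominated and centrifugal‑dominated regions and make the exponent $1/3$ come out sharp is the technical heart of the argument.
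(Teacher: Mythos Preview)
The paper does not prove this proposition at all: it is quoted from \cite{LWZ} and only commented on heuristically in Section~\ref{subsec2.5}. So there is no ``paper's own proof'' to compare your proposal against. What I can do is compare your sketch to the discussion the paper \emph{does} give of how \cite{LWZ} proceeds.

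Your quasimode construction for the lower bound, with the wavepacket localized near $r_*\sim\beta^{1/6}$ where the confining potential and the degenerating differential rotation balance, matches exactly the heuristic picture the paper lays out in Section~\ref{subsec2.5}. Your upper-bound strategy --- energy identities plus a case split on the location $r_\lambda$ of the critical layer, with the worst exponent $-1/3$ coming from case~(iii) at $r_\lambda\sim\beta^{1/6}$ --- is also in line with the analysis of the simplified models in \cite{GGN,De1}, which the paper cites as precursors.

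There is, however, a likely gap in your treatment of the nonlocal term when $n=\pm1$. You write that ``the passage to $Z_0$ when $n=\pm1$ [is] precisely what makes that absorption possible,'' i.e.\ that restricting to $Z_0$ is enough to let you treat $\beta\,g\,\Omega_n[u]$ perturbatively. But the paper explicitly says (end of Section~\ref{subsec2.5}) that the perturbative argument of \cite{LWZ} \emph{fails} when $n=\pm1$, and that in that case the authors of \cite{LWZ} ``were able to eliminate completely the nonlocal term $\Lamnl$ using a beautiful transformation inspired from scattering theory.'' Simply passing to $Z_0$ removes the kernel of $M_n$, but it does not by itself make $g\,\Omega_n$ small enough relative to $\phi$ to be absorbed; an additional conjugation or change of unknown is apparently needed. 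Your outline does not anticipate this, and this is where your proof would most likely stall if you tried to carry it out in detail.
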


\subsection{Historical and heuristical remarks}
\label{subsec2.5}
Proposition~\ref{prop:LWZ} is the culmination of a series of works
where resolvent estimates similar to \eqref{LWZest} were obtained 
for simplified models. It was first realized that, in the stability
analysis of the Lamb-Oseen vortex, the enhanced dissipation effect
for large values of $|\alpha|$ is due to the interplay of the diffusion 
operator $\cL$ and the advection term $\Lamad = v^G\cdot\nabla$, 
whereas the nonlocal correction $\Lamnl$ plays a relatively minor 
role. As in \eqref{Lconj}, we observe that
\[
   G^{-1/2}\,(\cL-\alpha\Lamad)\,G^{1/2}  \,=\, L - \alpha \Lamad\,,
\]
because the operator $\Lamad$ commutes with the radially symmetric weight 
$G^{1/2}$. In this way, we are thus led to study a large, skew-symmetric 
perturbation of the harmonic oscillator $L$ in $L^2(\R^2)$. In \cite{GGN}, 
I.~Gallagher, F.~Nier and the author analyzed the following complex 
Schr\"odinger operator in $L^2(\R)$\: 
\[
  H_\alpha \,=\, -\partial_x^2 + x^2 + i \alpha \phi(x)\,, \qquad
  x \in \R\,,
\]
which can be considered as a one-dimensional analog of 
$L-\alpha\Lamad$. If $\phi$ is given by \eqref{phigdef}, they 
proved that 
\begin{equation}\label{GGNest}
  \sup_{\lambda \in \R}\|(H_\alpha - i\lambda)^{-1}\| \,=\, 
  \cO(|\alpha|^{-1/3})\,, \quad \hbox{and}\quad \inf
  \bigl\{\Re(z)\,\big|\, z \in \sigma(H_\alpha)\bigr\}
  \,\ge\, \cO(|\alpha|^{1/2})\,,
\end{equation}
as $|\alpha| \to \infty$, and they explained the origin of the exponents 
$1/3$ and $1/2$ appearing in \eqref{GGNest}. Using similar techniques, 
Wen Deng \cite{De1} obtained for the simplified operator 
$\cL - \alpha \Lamad$ the estimate
\[
  c_1 (1+|\alpha|)^{-1/3} \,\le\, \sup_{\lambda \in \R} 
  \|(\cL - \alpha \Lamad - i\lambda)^{-1}\|_{Y_0^\perp \to 
  Y_0^\perp} \,\le\, c_2 (1+|\alpha|)^{-1/3}\,,
\]
where $Y_0^\perp$ is the orthogonal complement in $X$ of all radially
symmetric functions. Subsequently, she also proved that the bound
\eqref{LWZest2} holds for the full operator $H_{n,\beta}$ provided the
azimuthal wavenumber $|n|$ is sufficiently large \cite{De2}. That
restriction was completely removed by Li, Wei, and Zhang in
\cite{LWZ}, using careful estimates which show (roughly speaking) that
the nonlocal term $\Lamnl$ in the skew-symmetric operator $\Lambda$
can be considered as a perturbation of the local differential operator
$\Lamad$. This argument fails when $n = \pm 1$, as is attested by the
existence of a nontrivial element in the kernel of $\Lambda_n$, but in 
that particular case the authors of \cite{LWZ} were able to eliminate 
completely the nonlocal term $\Lamnl$ using a beautiful transformation 
inspired from scattering theory.

It is important to emphasize here the crucial role played by the
Gaussian weight $G^{-1}(\xi)$ in all resolvent estimates presented in
this section.  The analysis of simplified one-dimensional models in
\cite{GGN,De1} shows that, for large values of the circulation
parameter, the resolvent on the imaginary axis is bounded by
$C |\alpha|^{-2/3}$ when acting on perturbations located in the vortex
core, where the differential rotation is maximal, and by
$C |\alpha|^{-1/2}$ for perturbations located at the origin, where the
differential rotation degenerates. When translated back into the
original variables, these partial results indicate that
axisymmetrization occurs in a time proportional to $|\alpha|^{1/3}$
and $|\alpha|^{1/2}$, respectively, in full agreement with the
predictions made in \cite{BL,BG2,BBG}. For arbitrary perturbations,
however, it is clear that no dependence on $|\alpha|$ can be obtained
if one estimates the resolvent in a translation invariant norm,
because the differential rotation of the vortex vanishes at
infinity. The situation is different in the weighted space $X$, where
the diffusion operator is replaced by the harmonic oscillator, see
\eqref{Lconj}. In that case, due to the quadratic potential, the
resolvent is small also for perturbations located far away from the
origin. Summarizing, there is a critical distance to the origin, of
the order of $|\alpha|^{1/6}$ if $|\alpha| \gg 1$, where the enhanced
dissipation due to the differential rotation is of the same order as
the artificial damping due to the quadratic potential in the harmonic
oscillator, and this determines the size of the resolvent in the
weighted space $X$, which is $\cO(|\alpha|^{-1/3})$ as stated in 
Proposition~\ref{prop:LWZ}. We insist on saying that this new exponent
$-1/3$, which predicts axisymmetrization in a time proportional to
$|\alpha|^{2/3}$, may not be directly related to physical phenomena\:
it is rather a consequence of our choice of measuring perturbations in
the Gaussian weighted space $X$.

\section{Semigroup estimates}\label{sec3}

Applying the resolvent bounds established in Propositions~\ref{prop:LWZ} and 
\ref{prop:LWZ3}, we now obtain sharp decay estimates for the semigroup 
generated by the linearized operator $\cL - \alpha\Lambda$ in the subspace 
$X_\perp = \ker(\Lambda)^\perp$. We use the Fourier decomposition introduced
in Section~\ref{subsec2.4} and first consider the semigroup defined 
by the one-dimensional operator \eqref{Hndef} in the space $Z$. 

\begin{prop}\label{prop:semi1} For any $n \in \Z$, $n \neq 0$, and 
any $\beta \in \R$, the linear operator $-H_{n,\beta}$ defined in 
\eqref{Hndef}  is the generator of an analytic semigroup in 
the space $Z = L^2(\R_+,g^{-1}r\dd r)$. Moreover, there exist positive 
constants $c_4, c_5$ such that, for any $n \in \Z$ with $|n| \ge 2$ 
and any $\beta \in \R$ with $|\beta| \ge 1$, the following estimate 
holds\:
\begin{equation}\label{semiest1}
  \|e^{-\tau H_{n,\beta}}\|_{Z \to Z} \,\le\, \min\Bigl(e^{-|n|\tau/2}\,,\,
   c_4 |\beta|^{2/3}\,e^{-c_5 |\beta|^{1/3}\tau}\Bigr)\,, \qquad \tau \ge 0\,.
\end{equation}
If $n = \pm1$, we have a similar estimate in the subspace $Z_0$ defined 
in \eqref{Z0def}\:
\begin{equation}\label{semiest2}
  \|e^{-\tau H_{n,\beta}}\|_{Z_0 \to Z_0} \,\le\, \min\Bigl(e^{-\tau}\,,\,
   c_4 |\beta|^{2/3}\,e^{-c_5 |\beta|^{1/3}\tau}\Bigr)\,, \qquad \tau \ge 0\,.
\end{equation}
\end{prop}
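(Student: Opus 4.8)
The plan is to deduce the semigroup bounds from the resolvent estimates via a Gearhart–Prüss type argument, combined with the dissipativity already available from Proposition~\ref{prop:dissip}. First I would note that $H_{n,\beta} = -\cL_n + i\beta M_n$ is, for fixed $n \neq 0$, a bounded perturbation ($i\beta M_n$) of the negative of the selfadjoint, sectorial operator $\cL_n$; since $-\cL_n$ generates an analytic semigroup of contractions on $Z$ (it is selfadjoint and bounded above by $-|n|/2$), and bounded perturbations preserve analyticity, it follows immediately that $-H_{n,\beta}$ generates an analytic semigroup on $Z$ (and on $Z_0$ when $n = \pm 1$, since $Z_0$ is invariant). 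This also gives the crude bound $\|e^{-\tau H_{n,\beta}}\|_{Z\to Z} \le e^{-|n|\tau/2}$ on $Z$ (resp.\ $\le e^{-\tau}$ on $Z_0$ for $n = \pm 1$), because $\Re\langle H_{n,\beta}w,w\rangle_Z = \langle \cL_n w,w\rangle_Z \ge \tfrac{|n|}{2}\|w\|_Z^2$ — this is the first branch of the minimum in \eqref{semiest1} and \eqref{semiest2}.

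The substance is the second branch $c_4|\beta|^{2/3} e^{-c_5|\beta|^{1/3}\tau}$. Here I would invoke a quantitative Gearhart–Prüss theorem (in the form due to Helffer–Sjöstrand or Wei): if $A$ generates a bounded $C^0$-semigroup and $\sup_{\lambda\in\R}\|(A - i\lambda)^{-1}\| \le \mathsf{M}$, then $\|e^{tA}\| \le \tfrac{C}{\mathsf{M}}$ for $t \ge C'\mathsf{M}$, or more precisely one gets exponential decay $\|e^{tA}\| \lesssim e^{-t/(e\mathsf{M})}$ after shifting. Applying this to $A = -H_{n,\beta}$ (which generates a contraction semigroup by Proposition~\ref{prop:dissip}, since $\cL_n - \alpha\Lambda_n$ restricted appropriately is $m$-dissipative) with $\mathsf{M} = c_2(1+|\beta|)^{-1/3}$ from \eqref{LWZest2}/\eqref{LWZest3}, one obtains a decay rate proportional to $|\beta|^{1/3}$. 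To get the correct form with the prefactor $|\beta|^{2/3}$ rather than an $O(1)$ constant, I would run the argument in two stages: first use Gearhart–Prüss on a half-plane $\Re z \ge -\delta|\beta|^{1/3}$ (the resolvent bound \eqref{LWZest2} on the imaginary axis plus analyticity/perturbation propagates into such a strip with a comparable bound, for a suitable small $\delta$) to get that $\|e^{-\tau H_{n,\beta}}\| \le C$ is small, of size $O(|\beta|^{-1/3}) \cdot \|\cdot\|$... — more carefully, one shows $\|e^{-\tau_0 H_{n,\beta}}\|_{Z\to Z}$ is already $\le 1/2$ (say) at a time $\tau_0 \sim |\beta|^{-1/3}$, then iterates the semigroup property; the transient prefactor $|\beta|^{2/3}$ is the price of the analytic smoothing estimate $\|e^{-\tau H_{n,\beta}}\| \le C\tau^{-1}\cdot(\text{something})$ needed to bridge the very short initial layer $\tau \lesssim |\beta|^{-2/3}$, during which the resolvent bound alone does not yet force decay.

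The main obstacle, I expect, is the careful bookkeeping of the $\beta$-dependence: extracting exponential decay at rate $c_5|\beta|^{1/3}$ with the specific polynomial prefactor $|\beta|^{2/3}$ requires a version of Gearhart–Prüss that is fully quantitative in the constant $\mathsf{M}$, and tracking how the imaginary-axis resolvent bound \eqref{LWZest2} extends to a neighborhood $\{\Re z \ge -\eta|\beta|^{1/3}\}$ of it — this uses the second-resolvent identity $(H_{n,\beta}-z)^{-1} = (H_{n,\beta}-i\lambda)^{-1}\sum_{k\ge 0}((i\lambda - z)(H_{n,\beta}-i\lambda)^{-1})^k$, convergent provided $|{\Re z}| \le \tfrac12 \mathsf{M}^{-1} \sim |\beta|^{1/3}$. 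Once the resolvent is controlled by $O(|\beta|^{-1/3})$ uniformly on that strip, a contour-integral representation of $e^{-\tau H_{n,\beta}}$ (pushing the contour to $\Re z = -c_5|\beta|^{1/3}$ and using analyticity plus the decay of the resolvent along horizontal rays, which follows from the analytic-semigroup sectorial estimate on $\cL_n$) yields \eqref{semiest1} and \eqref{semiest2} directly, with the $|\beta|^{2/3}$ coming from estimating $\int |dz|$ over the vertical part of the shifted contour against the sectorial bound. I would organize the write-up so that the abstract Gearhart–Prüss step is quoted as a lemma and only the verification of its hypotheses (uniform strip resolvent bound, contraction property, analyticity) is carried out in detail.
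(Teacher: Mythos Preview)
Your opening paragraph (analyticity from bounded perturbation of the selfadjoint $\cL_n$, and the contraction bound $e^{-|n|\tau/2}$ from dissipativity) is correct and matches the paper exactly.

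For the second branch you sketch two routes and blur them together; it is worth separating them, because one is the paper's argument and the other is a genuine alternative.

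\emph{The paper's route} is the direct inverse-Laplace contour integral, not Gearhart--Pr\"uss. The contour is a vertical segment at $\Re z = x_0 = |\beta|^{1/3}/(2c_2)$ joined to two oblique rays going off to $\Re z \to +\infty$. On the vertical piece the resolvent is bounded by $2c_2|\beta|^{-1/3}$ via the Neumann-series extension you mention. The point you are missing is how the contour is \emph{closed}: the paper observes that $M_n$ is a bounded operator on $Z$ with norm $\le c_6$ uniformly in $n$, hence the numerical range of $H_{n,\beta}$ sits in the horizontal strip $\{|\Im z| \le c_6|\beta|\}$. Outside that strip one has $\|(H_{n,\beta}-z)^{-1}\| \le 1/\mathrm{dist}(z,\mathcal N)$, which controls the oblique rays. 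The vertical segment therefore has length $2y_0 = 4c_6|\beta|$, and integrating the resolvent bound $|\beta|^{-1/3}$ over it produces exactly the prefactor $|\beta|^{2/3}$. Your attribution of the prefactor to ``analytic smoothing'' or a ``sectorial bound on $\cL_n$'' is off target: without the numerical-range strip estimate the vertical integral would diverge, and a sectorial estimate for $-H_{n,\beta}$ would carry $\beta$-dependent constants.

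\emph{Your Gearhart--Pr\"uss route} via Wei's quantitative theorem is a legitimate alternative, but you have the logic backwards. Wei's bound for an $m$-accretive operator gives $\|e^{-\tau H_{n,\beta}}\| \le e^{\pi/2}\,e^{-\tau \Psi(H_{n,\beta})}$ with $\Psi(H_{n,\beta}) \ge c_2^{-1}(1+|\beta|)^{1/3}$, i.e.\ an $O(1)$ prefactor --- \emph{stronger} than \eqref{semiest1}. There is no need for the ``two-stage'' argument or the transient layer discussion: you are working hard to recover a factor $|\beta|^{2/3}$ that is an artifact of the contour method, not an intrinsic feature of the estimate. If you take this route, state Wei's theorem cleanly, check $m$-accretivity (which you already have), plug in \eqref{LWZest2}--\eqref{LWZest3}, and you are done.
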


\begin{proof}
We recall that $-H_{n,\beta} = \cL_n - i\beta M_n$, where $\cL_n$ is 
a selfadjoint operator in $Z$ satisfying $\cL_n \le -|n|/2$, and 
$M_n$ is a bounded symmetric operator. By classical perturbation 
theory \cite[Section~3.2]{Pa}, it follows that $-H_{n,\beta}$ 
generates an analytic semigroup in $Z$. Moreover, since 
the operator $-H_{n,\beta} + |n|/2$ is $m$-dissipative, the 
Lumer-Phillips theorem \cite[Section~1.4]{Pa} implies that 
$\|e^{-\tau H_{n,\beta}}\|_{Z \to Z} \le e^{-|n|\tau /2}$ for all $t \ge 0$. 
For later use, we observe that there exists a constant $c_6 > 0$ 
such that $\|M_n\|_{Z \to Z} \le c_6$ for all nonzero $n \in \Z$.
Indeed $M_n w = \phi w - g \Omega_n[w]$ where $|\phi| \le (8\pi)^{-1}$, 
and it follows from \eqref{Omndef} that $|\Omega_n[w]| \le \frac14 
\int_0^\infty r|w(r)|\dd r \le C \|w\|_Z$ for all $n \in \Z$, $n \neq 0$. 
This shows that the {\em numerical range} 
\[
  \cN(H_{n,\beta}) \,=\, \bigl\{\langle w\,,\,H_{n,\beta}w\rangle_Z
   \,\big|\, w \in D(\cL_n)\,,~\|w\|_Z = 1\bigr\}
\]
satisfies
\begin{equation}\label{numrange}
  \cN(H_{n,\beta}) \,\subset\, \bigl\{z \in \C \,\big|\, 
  \Re(z) \ge |n|/2\,,~|\Im(z)| \le c_6|\beta| \bigr\}\,.
\end{equation}

To estimate the semigroup $e^{-\tau H_{n,\beta}}$ for $\tau > 0$ and 
$|\beta| \ge 1$, we use the inverse Laplace formula
\begin{equation}\label{Laplace}
  e^{-\tau H_{n,\beta}} \,=\, \frac{1}{2\pi i}\int_\Gamma (H_{n,\beta}-z)^{-1} 
  \,e^{-z\tau}\dd z\,, 
\end{equation}
where $\Gamma$ is the integration path in the complex plane depicted in 
Fig.~1. More precisely, we define $\Gamma = \Gamma_1 \cup \Gamma_2 \cup 
\Gamma_3$ where
\begin{align}\nonumber
  \Gamma_1 \,&=\,  \Bigl\{x_0 - iy_0 - (1-i)s
  \,\Big|\, -\infty \le s \le 0\Bigr\}\,, \\ \label{Gammadef}
  \Gamma_2 \,&=\, \Bigl\{x_0 + iy\,\Big|\, 
  -y_0 \le y \le y_0\Bigr\}\,, \\ \nonumber
  \Gamma_3 \,&=\, \Bigl\{x_0 + iy_0 + (1+i)s
  \,\Big|\, 0 \le s \le \infty\Bigr\}\,. 
\end{align}
Here $x_0 = |\beta|^{1/3}/(2c_2)$ and $y_0 = 2c_6|\beta|$, where the constants
$c_2, c_6$ are as in \eqref{LWZest2}, \eqref{numrange}. 

\figurewithtex 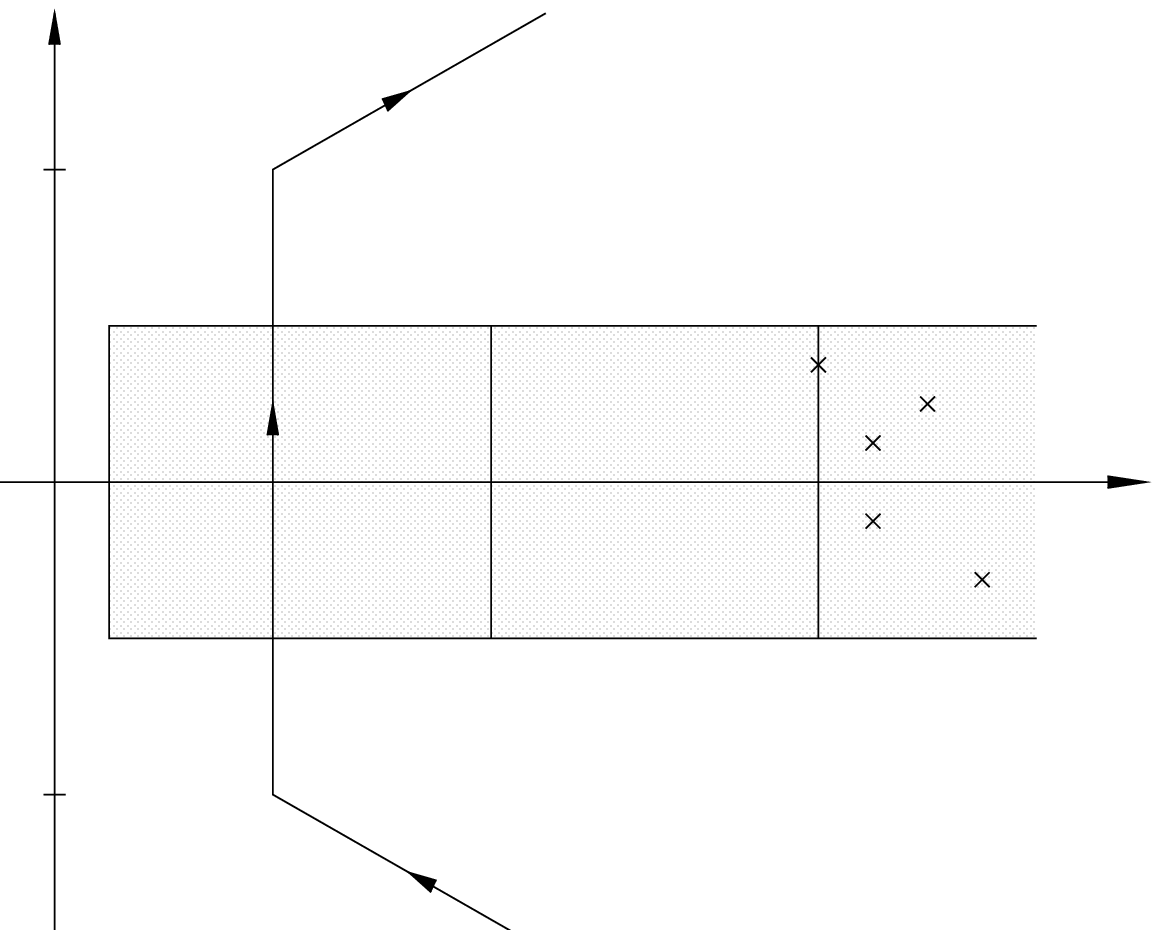 fig1.tex 9.50 11.00
{\bf Fig.~1:} The integration path \eqref{Gammadef} surrounds the 
spectrum of $H_{n,\beta}$, which is discrete and entirely contained 
in the shaded region defined by \eqref{numrange}. The pseudospectral 
abscissa $x_1 = 2x_0 = |\beta|^{1/3}/c_2$ is given by 
Proposition~\ref{prop:LWZ3}, and the spectral lower bound $x_2 = c_3 
|\beta|^{1/2}$ by \cite[Theorem~6.1]{LWZ}, see Remark~\ref{rem:abscissa} 
below. \cr

We first assume that $|n| \ge 2$ and compute the norm of the semigroup
in the whole space $Z$ using formula \eqref{Laplace}\:
\[
 \|e^{-\tau H_{n,\beta}}\|_{Z \to Z} \,\le\, \frac{1}{2\pi}\sum_{j=1}^3 \int_{\Gamma_j} 
 \|(H_{n,\beta}-z)^{-1} \|_{Z \to Z} \,\,e^{-\Re(z)\tau}\,|{\rm d} z| \,=\, I_1(\tau) 
  + I_2(\tau) + I_3(\tau)\,. 
\]
If $z = x_0 + iy \in \Gamma_2$, the standard factorization $H_{n,\beta}-z 
= (H_{n,\beta}-iy)(1 - x_0 (H_{n,\beta}-iy)^{-1})$ combined with estimate 
\eqref{LWZest2} yields the bound
\begin{equation}\label{x0bound}
  \|(H_{n,\beta}-z)^{-1} \| \,\le\, \frac{\|(H_{n,\beta}-iy)^{-1}\|}{1 - 
  |x_0|\,\|(H_{n,\beta}-iy)^{-1}\|} \,\le\, 2 c_2 |\beta|^{-1/3}\,,
\end{equation}
because $|x_0|\,\|(H_{n,\beta}-iy)^{-1}\| \le 1/2$ for any $y \in \R$. It 
follows that 
\[
  I_2(\tau) \,=\, \frac{1}{2\pi} \int_{-y_0}^{y_0}\|(H_{n,\beta}-z)^{-1}\|
  \,e^{-x_0 \tau}\dd y \,\le\, \frac{2y_0c_2}{\pi}\,|\beta|^{-1/3} 
  \,e^{-x_0 \tau} \,=\, \frac{4c_2c_6}{\pi}\,|\beta|^{2/3} \,e^{-\tau|\beta|^{1/3}/(2c_2)}\,.
\]

On the other hand, if $z = x_0 + iy_0 + (1+i)s \in \Gamma_3$, then
$z \notin \cN(H_{n,\beta})$ because $\Im(z) = 2c_6|\beta| + s > c_6|\beta|$. 
It follows that
\[
   \|(H_{n,\beta}-z)^{-1} \| \,\le\, \frac{1}{\dist(z,\cN(H_{n,\beta}))}
   \,\le\, \frac{1}{c_6|\beta| + s}\,,
\]
and we deduce that
\[
  I_3(\tau) \,=\, \frac{1}{2\pi} \int_0^\infty \|(H_{n,\beta}-z)^{-1}\|
  \,e^{-(x_0 +s)\tau}\sqrt{2}\dd s \,\le\, \frac{1}{\sqrt{2}\pi}\,e^{-x_0 \tau}
  \Psi(c_6|\beta|\tau)\,,
\]
where $\Psi : (0,\infty) \to (0,\infty)$ is the decreasing function
defined by
\[
  \Psi(\rho) \,=\, \int_0^\infty \frac{e^{-s}}{\rho + s}\dd s
  \,\sim\, \begin{cases}\log(\rho^{-1}) & \hbox{as } \rho \to 0\,, 
  \\ \rho^{-1} & \hbox{as } \rho \to \infty \,.\end{cases}
\]
If we assume that $|\beta|\tau \ge 1$, we thus obtain 
\[
  I_3(\tau) \,\le\, \frac{\Psi(c)}{\sqrt{2}\pi}\,e^{-x_0 \tau} \,=\, 
  \frac{\Psi(c)}{\sqrt{2}\pi}\,\,e^{-\tau|\beta|^{1/3}/(2c_2)}\,.
\]
The same bound also holds for the integral $I_1(\tau)$, because
$\Gamma_1 = \overline{\Gamma}_3$ and all estimates we used for the
resolvent $(H_{n,\beta}-z)^{-1}$ are unchanged if $z$ is replaced 
by the complex conjugate $\bar z$. 

Summarizing we have shown that, if $|\beta| \ge 1$ and $|\beta| \tau \ge 1$, 
then
\begin{equation}\label{prelim}
  \|e^{-\tau H_{n,\beta}}\|_{Z \to Z} \,\le\, I_1(\tau) +  I_2(\tau) +  
  I_3(\tau) \,\le\, c_4 |\beta|^{2/3}\,e^{-c_5 |\beta|^{1/3}\tau}\,,
\end{equation}
for some positive constants $c_4, c_5$. In fact, we can assume without
loss of generality that $c_4 \ge e^{c_5}$, in which case equation
\eqref{prelim} also holds for $0 \le \tau < |\beta|^{-1}$ because we
already know that $\|e^{-\tau H_{n,\beta}}\|_{Z \to Z} \le e^{-|n|\tau /2} \le
1$ for all $\tau \ge 0$. This proves estimate \eqref{semiest1}, and
exactly the same argument gives \eqref{semiest2} for $n = \pm 1$ if we
restrict the operator $H_{n,\beta}$ to the subspace $Z_0$ where 
$-H_{n,\beta} + 1$ is $m$-dissipative and inequality \eqref{LWZest3}
holds.
\end{proof}

\begin{rem}\label{rem:abscissa}
We emphasize that there is a lot of freedom in the choice of the
abscissa $x_0$ in the proof of Proposition~\ref{prop:semi1}. In fact,
the only real constraint is that the spectrum of $H_{n,\beta}$ be
entirely contained in the half-plane $\{\Re(z) > x_0\}$. In view of
estimates \eqref{LWZest2}, \eqref{LWZest3} this is certainly the case
if $x_0 = \kappa |\beta|^{1/3}/c_2$ for some $\kappa \in [0,1)$, in
which case a slight modification of the argument above gives the
bounds \eqref{semiest1}, \eqref{semiest2} with $c_5 = \kappa/c_2$ and
$c_4 > 0$ depending only on $\kappa$. However, we know from
\cite[Theorem~6.1]{LWZ} that all eigenvalues of the operator
$H_{n,\beta}$ (restricted to $Z_0$ if $n = \pm 1$) have real parts
larger than $c_3 |\beta|^{1/2}$ if $|\beta| \ge 1$, see also
Proposition~\ref{prop:LWZ2}. If we take $x_0$ such that $|\beta|^{1/3}/c_2 
< x_0 < c_3 |\beta|^{1/2}$, we obtain for $|n| \ge 2$ an estimate of 
the form
\begin{equation}\label{semiest3}
  \|e^{-\tau H_{n,\beta}}\|_{Z \to Z} \,\le\, C(x_0,|\beta|)
  \,e^{-x_0 \tau}\,, \qquad \tau \ge 0\,,
\end{equation}
which is clearly superior to \eqref{semiest1} for large times, as 
it implies that
\begin{equation}\label{semiest4}
  \limsup_{\tau \to \infty} \frac{1}{\tau} \log \|e^{-\tau H_{n,\beta}}\|_{Z \to Z} 
  \,\le\, - c_3 |\beta|^{1/2}\,.
\end{equation}
However, we have no control anymore on the constant $C(x_0,|\beta|)$, 
because the resolvent norm $\|(H_{n,\beta}-z)^{-1} \|$ can be extremely
large on the vertical line $\{\Re(z) = x_0\}$ which meets the 
pseudospectrum of $H_{n,\beta}$, see \cite[Lemma~1.3]{GGN} for a 
detailed discussion. In the applications to nonlinear stability
in Section~\ref{sec4}, we need to control the size of the perturbations 
not only in the limit $\tau \to \infty$, but also for intermediate times, 
and this is why we cannot use estimate \eqref{semiest3} for 
$x_0 > |\beta|^{1/3}/c_2$. 
\end{rem}

As a corollary of Proposition~\ref{prop:semi1}, we deduce the 
following decay estimate for the semigroup generated by the 
linearized operator $\cL - \alpha\Lambda$ in the subspace 
$X_\perp = \ker(\Lambda)^\perp$. 

\begin{prop}\label{prop:semi2} For any $\alpha \in \R$ with 
$|\alpha| \ge 1$, we have
\begin{equation}\label{semiest5}
  \|e^{\tau (\cL - \alpha\Lambda)}\|_{X_\perp \to X_\perp} \,\le\, 
  \min\Bigl(e^{-\tau}\,,\, c_7 |\alpha|^{2/3}\,e^{-c_5 
  |\alpha|^{1/3}\tau}\Bigr)\,, \qquad \tau \ge 0\,,
\end{equation}
where $c_7 = \max(c_4,e^2)$ and $c_4,c_5$ are as in 
Proposition~\ref{prop:semi1}.
\end{prop}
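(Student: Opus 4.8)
The plan is to deduce Proposition~\ref{prop:semi2} from the one-dimensional semigroup estimates of Proposition~\ref{prop:semi1} via the angular Fourier decomposition $X = \oplus_{n\in\Z} Y_n$ recalled in Section~\ref{subsec2.4}. First I would use the characterization \eqref{KerLambda} of $\ker(\Lambda)$, together with the fact that $\partial_1 G$ and $\partial_2 G$ are linear combinations of $rg\,e^{i\theta}$ and $rg\,e^{-i\theta}$, to see that $X_\perp \cap Y_0 = \{0\}$, that $X_\perp \cap Y_n = Y_n$ for all $|n|\ge 2$, and that $X_\perp \cap Y_n$ corresponds, under the identification $e^{in\theta}f(r)\leftrightarrow f$, to the hyperplane $Z_0$ of \eqref{Z0def} when $n=\pm1$. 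Each of these closed subspaces is invariant under $\cL - \alpha\Lambda$, which acts on $Y_n$ as $-H_{n,\beta}$ with $\beta = n\alpha$ (see \eqref{Hndef}); hence $\|e^{\tau(\cL-\alpha\Lambda)}\|_{X_\perp\to X_\perp} = \sup_{n\ne 0}\|e^{-\tau H_{n,\beta}}\|$, the norm being taken on $Z$ when $|n|\ge 2$ and on $Z_0$ when $|n|=1$. It thus suffices to bound each term by $\min\bigl(e^{-\tau},\,c_7|\alpha|^{2/3}e^{-c_5|\alpha|^{1/3}\tau}\bigr)$ and then take the supremum.

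The bound by $e^{-\tau}$ is immediate: for $|n|\ge 2$, \eqref{semiest1} gives $\|e^{-\tau H_{n,\beta}}\|_{Z\to Z}\le e^{-|n|\tau/2}\le e^{-\tau}$, and for $n=\pm1$ the bound $\|e^{-\tau H_{n,\beta}}\|_{Z_0\to Z_0}\le e^{-\tau}$ is part of \eqref{semiest2}. For the $\alpha$-dependent bound the case $n=\pm1$ is again direct, since there $|\beta|=|\alpha|\ge 1$ and \eqref{semiest2} yields $c_4|\alpha|^{2/3}e^{-c_5|\alpha|^{1/3}\tau}\le c_7|\alpha|^{2/3}e^{-c_5|\alpha|^{1/3}\tau}$ because $c_7\ge c_4$. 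The case $|n|\ge 2$ is where the real work lies: now $|\beta|=|n||\alpha|$, so the prefactor $|\beta|^{2/3}=|n|^{2/3}|\alpha|^{2/3}$ in \eqref{semiest1} carries an extra growth in $|n|$ that must be absorbed. I would handle this by splitting the time axis at the crossover point $\tau_1 := (c_5|\alpha|^{1/3})^{-1}\log(c_7|\alpha|^{2/3})\ge 0$. For $0\le\tau\le\tau_1$ one uses only $\|e^{-\tau H_{n,\beta}}\|\le e^{-|n|\tau/2}\le 1$, which is dominated by $c_7|\alpha|^{2/3}e^{-c_5|\alpha|^{1/3}\tau}$ since the latter is $\ge 1$ on $[0,\tau_1]$. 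For $\tau\ge\tau_1$ one uses the second alternative in \eqref{semiest1}, namely $\|e^{-\tau H_{n,\beta}}\|\le c_4|n|^{2/3}|\alpha|^{2/3}e^{-c_5|n|^{1/3}|\alpha|^{1/3}\tau}$, and checks that its quotient by $c_7|\alpha|^{2/3}e^{-c_5|\alpha|^{1/3}\tau}$, equal to $\tfrac{c_4}{c_7}|n|^{2/3}e^{-c_5(|n|^{1/3}-1)|\alpha|^{1/3}\tau}$, is at most $1$; being decreasing in $\tau$, it is enough to verify this at $\tau=\tau_1$, where it reduces to the purely numerical inequality $c_4|n|^{2/3}\le c_7^{\,|n|^{1/3}}$.

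The only genuine obstacle is this last elementary inequality, and it is exactly what dictates the choice $c_7=\max(c_4,e^2)$. Writing $m=|n|^{1/3}$, so that $m\ge 2^{1/3}>1$ when $|n|\ge 2$, one needs $\log c_4 + 2\log m \le m\log c_7$. If $c_4\le e^2$ then $\log c_7 = 2$ and, since $\log c_4\le 2$, it suffices that $2+2\log m\le 2m$, i.e. $\log m\le m-1$; if $c_4>e^2$ then $\log c_7=\log c_4>2$ and it suffices that $2\log m\le 2(m-1)$, again $\log m\le m-1$. Since $\log m\le m-1$ holds for all $m\ge 1$ — and $c_4\ge e^{c_5}>1$ by the proof of Proposition~\ref{prop:semi1}, so that all logarithms above are positive — the numerical inequality is valid for every integer $|n|\ge 2$. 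Collecting the three bounds above and taking the supremum over $n\ne 0$ then gives \eqref{semiest5}.
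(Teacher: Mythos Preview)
Your proof is correct and follows essentially the same route as the paper: reduce to the one-dimensional operators $H_{n,n\alpha}$ via the angular Fourier decomposition, then show that the mode-wise bounds \eqref{semiest1}--\eqref{semiest2} are uniformly dominated by $\min(e^{-\tau},\,c_7|\alpha|^{2/3}e^{-c_5|\alpha|^{1/3}\tau})$. The only organizational difference is that the paper splits according to whether $T := c_5|\alpha|^{1/3}\tau$ is above or below $2$ and uses that $|n|^{2/3}e^{-|n|^{1/3}T}$ is maximized at $|n|=1$ when $T\ge 2$, whereas you split at the crossover time $\tau_1$ and reduce the large-$\tau$ regime to the elementary inequality $c_4|n|^{2/3}\le c_7^{\,|n|^{1/3}}$; both choices make the definition $c_7=\max(c_4,e^2)$ transparent.
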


\begin{proof}
According to \eqref{Xdecomp}, any $w \in X_\perp$ can be represented in 
polar coordinates as
\[
  w \,=\, \sum_{n\neq 0} w_n(r) \,e^{in\theta}\,, \qquad 
  \|w\|_X^2 \,=\, 2\pi \sum_{n\neq 0} \|w_n\|_Z^2\,,
\]
where $w_{\pm 1} \in Z_0$ and $w_n \in Z$ for $|n| \ge 2$. In particular, 
we have by definition
\[
  \|e^{\tau (\cL - \alpha\Lambda)} w\|_X^2 \,=\, 2\pi \sum_{n\neq 0} 
  \|e^{\tau(\cL_n - \alpha\Lambda_n)} w_n\|_Z^2 \,=\,  2\pi \sum_{n\neq 0} 
  \|e^{-\tau H_{n,n\alpha}} w_n\|_Z^2\,, \qquad \tau \ge 0\,. 
\]
In view of Proposition~\ref{prop:semi1}, to prove \eqref{semiest5} we only 
need to verify that
\begin{equation}\label{semisup}
  \sup_{n\neq 0} \min\Bigl(e^{-\tau}\,,\, c_4 |n|^{2/3} |\alpha|^{2/3}
  \,e^{-c_5 |n|^{1/3} |\alpha|^{1/3}\tau}\Bigr) \,\le\, 
  \min\Bigl(e^{-\tau}\,,\, c_7 |\alpha|^{2/3} \,e^{-c_5 |\alpha|^{1/3}\tau}\Bigr)\,, 
\end{equation}
for any $\tau \ge 0$ and any $\alpha \in \R$ with $|\alpha| \ge 1$. 

Let $T = c_5 |\alpha|^{1/3}\tau$. If $T \ge 2$, the quantity
$|n|^{2/3} \,e^{-|n|^{1/3}T}$ reaches its maximum (over all nonzero
integers) when $|n| = 1$, and \eqref{semisup} follows immediately 
since $c_7 \ge c_4$. If $0 \le T < 2$, we observe that
\[
  e^{-\tau} \,\le\, 1 \,\le\, |\alpha|^{2/3} \,e^{2-T} \,\le\, c_7 
  |\alpha|^{2/3}\,e^{-c_5 |\alpha|^{1/3}\tau}\,,
\]
because $|\alpha| \ge 1$ and $c_7 \ge e^2$, hence \eqref{semisup}
holds in that case too. 
\end{proof}

\begin{rem}\label{numrem}
It is also possible to establish \eqref{semiest5} using the inverse
Laplace representation for the semigroup $e^{\tau (\cL - \alpha\Lambda)}$ 
in $X_\perp$ and the resolvent estimate given by Proposition~\ref{prop:LWZ}.
In that alternative approach, one needs to locate the numerical range
of the operator $-\cL + \alpha\Lambda$ in $X_\perp$, in order to
choose an appropriate integration path. From \eqref{numrange} 
it is easy to deduce that
\[
  \cN(-\cL_\perp +\alpha\Lambda_\perp) \,\subset\, \bigl\{z \in \C 
  \,\big|\, \Re(z) \ge 1\,,~|\Im(z)| \le 2c_6|\alpha| \Re(z) \bigr\}\,,
\]
but this ``sectorial'' estimate can be improved as is shown by the 
following result.
\end{rem}

\begin{lem}\label{lem:numrange}
There exist positive constants $c_8$, $c_9$ such that
\[
  \cN(-\cL_\perp +\alpha\Lambda_\perp) \,\subset\, \bigl\{z \in \C 
  \,\big|\, \Re(z) \ge 1\,,~|\Im(z)| \le |\alpha|(c_8 \Re(z)^{1/2}
  + c_9)\bigr\}\,.
\] 
\end{lem}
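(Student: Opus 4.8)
The plan is to use the angular Fourier decomposition \eqref{Xdecomp} and to control each mode separately. Let $w$ be in the domain of $-\cL_\perp + \alpha\Lambda_\perp$ with $\|w\|_X = 1$, and write $w = \sum_{n\neq 0}w_n(r)\,e^{in\theta}$; by \eqref{KerLambda} the space $X_\perp$ has no radial component (so the sum runs over $n\neq 0$) and satisfies $w_{\pm 1}\in Z_0$, with $Z_0$ as in \eqref{Z0def}. Set $z = \langle w, (-\cL_\perp + \alpha\Lambda_\perp)w\rangle_X$. Since $\Lambda$ is skew-symmetric in $X$ we have $\Re(z) = \langle w,-\cL w\rangle_X = 2\pi\sum_{n\neq 0}\langle w_n,-\cL_n w_n\rangle_Z$, so the lower bounds for $\cL_n$ recalled in Section~\ref{subsec2.4} (namely $-\cL_n\ge |n|/2$ on $Z$, and $-\cL_{\pm1}\ge 1$ on $Z_0$) immediately give $\Re(z)\ge 1$, the first claim. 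For the imaginary part, $\Lambda_n = inM_n$ by \eqref{Lambdandef} and $M_n$ is symmetric in $Z$, so $\langle w_n,M_n w_n\rangle_Z$ is real and $\Im(z) = 2\pi\,\alpha\sum_{n\neq 0} n\,\langle w_n,M_n w_n\rangle_Z$, hence
\[
  |\Im(z)| \,\le\, 2\pi\,|\alpha|\sum_{n\neq 0}|n|\;\bigl|\langle w_n,M_n w_n\rangle_Z\bigr|\,.
\]
So the whole matter reduces to a good per-mode bound on $|n|\,|\langle w_n,M_n w_n\rangle_Z|$.

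The target is the estimate
\[
  2\pi\,|n|\;\bigl|\langle w_n,M_n w_n\rangle_Z\bigr| \,\le\, C\bigl(E_n^{1/2}\rho_n^{1/2}+\rho_n\bigr)\,,\qquad
  \rho_n := 2\pi\|w_n\|_Z^2\,,\quad E_n := 2\pi\,\langle w_n,(-\cL_n+\tfrac12)w_n\rangle_Z\,,
\]
with $C$ independent of $n$, so that $\sum_{n\neq 0}\rho_n = \|w\|_X^2 = 1$ and $\sum_{n\neq 0}E_n = \Re(z)+\tfrac12$. To prove it I would split $M_n w_n = \phi\,w_n - g\,\Omega_n[w_n]$ as in \eqref{Lambdandef} and pass to $v_n := g^{-1/2}w_n$, so that, by the mode-$n$ form of the conjugation \eqref{Lconj}, $E_n = 2\pi\int_0^\infty\bigl(|v_n'|^2+\tfrac{n^2}{r^2}|v_n|^2+\tfrac{r^2}{16}|v_n|^2\bigr)r\dd r$, which in particular dominates $2\pi\int_0^\infty\tfrac{n^2}{r^2}|v_n|^2\,r\dd r$, while $\langle w_n,\phi w_n\rangle_Z = \int_0^\infty\phi(r)|v_n(r)|^2\,r\dd r$ and $\rho_n = 2\pi\int_0^\infty|v_n|^2\,r\dd r$. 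The \emph{local term} is the crux: using $0\le\phi(r)\le C_0(1+r^2)^{-1}$, immediate from \eqref{phigdef}, I would split the radial integral at $r=1$. On $\{r\le 1\}$, write $|n|\,|v_n|^2 = \bigl(\tfrac{|n|}{r}|v_n|\bigr)\bigl(r|v_n|\bigr)$ and apply Cauchy--Schwarz to the $r$-integral, using $r^2|v_n|^2\le |v_n|^2$ there; on $\{r\ge 1\}$, write $\tfrac{|n|}{r^2}|v_n|^2 = \bigl(\tfrac{|n|}{r}|v_n|\bigr)\bigl(\tfrac1r|v_n|\bigr)$ and apply Cauchy--Schwarz, using $r^{-2}|v_n|^2\le |v_n|^2$. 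In each regime one of the resulting factors is controlled by $\bigl(2\pi\int\tfrac{n^2}{r^2}|v_n|^2 r\dd r\bigr)^{1/2}\le E_n^{1/2}$ and the other by $\rho_n^{1/2}$, which gives $2\pi\,|n|\,\langle w_n,\phi w_n\rangle_Z \le C\,E_n^{1/2}\rho_n^{1/2}$.

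For the \emph{nonlocal term}, I would integrate the defining relation $-\Omega_n''-\tfrac1r\Omega_n'+\tfrac{n^2}{r^2}\Omega_n = w_n$ against $\Omega_n$; the boundary terms vanish because, by \eqref{Omndef} and the Gaussian decay of elements of $Z$, $\Omega_n$ vanishes like $r^{|n|}$ at the origin and decays like $r^{-|n|}$ at infinity, yielding
\[
  \langle w_n,g\,\Omega_n[w_n]\rangle_Z \,=\, \int_0^\infty\overline{w_n}\,\Omega_n\,r\dd r \,=\, \int_0^\infty\Bigl(|\Omega_n'|^2+\tfrac{n^2}{r^2}|\Omega_n|^2\Bigr)r\dd r \,=:\, P_n\,\ge\,0\,,
\]
a quantity that is a priori finite because the uniform bound on $\|M_n\|_{Z\to Z}$ from the proof of Proposition~\ref{prop:semi1} puts $g\,\Omega_n[w_n] = \phi w_n - M_n w_n$ in $Z$. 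Writing then $P_n = \int_0^\infty g^{1/2}\,\overline{v_n}\,\Omega_n\,r\dd r$, using Cauchy--Schwarz and the bound $\int_0^\infty g\,|\Omega_n|^2 r\dd r \le \|g(r)\,r^2\|_{L^\infty}\,n^{-2}\int_0^\infty\tfrac{n^2}{r^2}|\Omega_n|^2 r\dd r \le C\,n^{-2}P_n$, one gets $P_n \le C\,|n|^{-1}\|w_n\|_Z\,P_n^{1/2}$, hence $P_n \le C\,n^{-2}\|w_n\|_Z^2$ and therefore $2\pi\,|n|\,P_n \le C\,\rho_n$. Since $\langle w_n,M_n w_n\rangle_Z\in[-P_n,\,\langle w_n,\phi w_n\rangle_Z]$, the two estimates together prove the per-mode bound, and summing it with Cauchy--Schwarz in the index $n$ gives
\[
  |\Im(z)| \,\le\, C\,|\alpha|\biggl(\Bigl(\sum_n E_n\Bigr)^{1/2}\Bigl(\sum_n\rho_n\Bigr)^{1/2}+\sum_n\rho_n\biggr) \,=\, C\,|\alpha|\Bigl(\bigl(\Re(z)+\tfrac12\bigr)^{1/2}+1\Bigr)\,,
\]
which, since $\Re(z)\ge 1$, is of the claimed form $|\alpha|\bigl(c_8\Re(z)^{1/2}+c_9\bigr)$.

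The step I expect to be the main obstacle is the local-term estimate: it is exactly there that the exponent $1/2$ appears, instead of the exponent $1$ given by the cruder bound $|\langle w_n,M_n w_n\rangle_Z|\le c_6\|w_n\|_Z^2$ that leads only to the sectorial estimate of Remark~\ref{numrem}. It works because of the competition, inside the quadratic form $E_n$, between the centrifugal weight $n^2/r^2$ and the harmonic-oscillator weight $r^2/16$ created by the Gaussian weight of the space $X$, together with the $\cO(r^{-2})$ decay of the angular velocity $\phi$ of Oseen's vortex.
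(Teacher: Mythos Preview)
Your argument is correct, but it follows a genuinely different route from the paper's proof. The paper works directly in $X$ without any angular Fourier decomposition: it splits $\Lambda = \Lamad + \Lamnl$ as in \eqref{Lambdadef}, bounds the advection part by $|\langle w,\Lamad w\rangle_X| \le \|v^G\|_{L^\infty}\|w\|_X\|\nabla w\|_X$, and controls the nonlocal part by $|\langle w,\Lamnl w\rangle_X| \le C\|w\|_X^2$ using a Biot--Savart $L^{4/3}\to L^4$ estimate from Lemma~\ref{lem:BS1}. The exponent $1/2$ then comes from the inequality $\|\nabla w\|_X^2 \le 4\Re(z)$ furnished by \eqref{E2}. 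Your approach instead stays entirely inside the one-dimensional framework of Section~\ref{subsec2.4}: you extract the $1/2$ from the centrifugal piece $\int \tfrac{n^2}{r^2}|v_n|^2\,r\dd r$ of the conjugated quadratic form, and you handle the nonlocal term by the energy identity $\langle w_n,g\,\Omega_n[w_n]\rangle_Z = \int(|\Omega_n'|^2+\tfrac{n^2}{r^2}|\Omega_n|^2)r\dd r$, avoiding Biot--Savart estimates altogether. The paper's proof is shorter and requires no bookkeeping in $n$; your proof is more self-contained within the mode-by-mode setup already used for Proposition~\ref{prop:LWZ3}, and it yields the slightly sharper per-mode information $2\pi|n|\,P_n \le C|n|^{-1}\rho_n$, which one could exploit if an $n$-dependent refinement were needed. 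One cosmetic point: your line ``$\langle w_n,M_n w_n\rangle_Z\in[-P_n,\langle w_n,\phi w_n\rangle_Z]$'' is a bit oblique, since in fact $\langle w_n,M_n w_n\rangle_Z = \langle w_n,\phi w_n\rangle_Z - P_n$ exactly; it would be cleaner to write the triangle inequality $|\langle w_n,M_n w_n\rangle_Z| \le \langle w_n,\phi w_n\rangle_Z + P_n$ directly, since both summands are nonnegative.
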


\begin{proof}
Let $w \in D(\cL) \subset X$. If $w \in X_1$, inequality \eqref{E2} 
in Lemma~\ref{lem:E} below shows that
\begin{equation}\label{cLest}
  \langle w, \cL w \rangle_X \,\le\, -\frac14 \|\nabla w\|_X^2  
  -\frac{1}{64} \|\xi w\|_X^2 - \frac18 \|w\|_X^2\,.
\end{equation}
To bound the quantity $\langle w, \Lambda w \rangle_X$, we decompose
$\Lambda = \Lamad + \Lamnl$ as in \eqref{Lambdadef}. We first observe 
that
\begin{equation}\label{Lamad}
  |\langle w, \Lamad w \rangle_X| \,=\, \Bigl|\int_{\R^2} G^{-1} \bar w 
  v^G \cdot \nabla w \dd\xi\Bigr| \,\le\, \|v^G\|_{L^\infty} 
  \|w\|_X \|\nabla w\|_X\,.
\end{equation}
On the other hand, since $\nabla G = -\frac12 \xi G$, we have
\[
  \langle w, \Lamnl w \rangle_X \,=\, \int_{\R^2} G^{-1} \bar w
  (K_{BS}*w)\cdot \nabla G \dd \xi \,=\, -\frac12 \int_{\R^2} \bar w
  (K_{BS}*w)\cdot \xi \dd \xi\,. 
\]
In view of Lemma~\ref{lem:BS1} below, we have $\|K_{BS}*w\|_{L^4}
\le C \|w\|_{L^{4/3}}$, hence
\begin{equation}\label{Lamnl}
  |\langle w, \Lamnl w \rangle_X| \,\le\, \frac12 \|K_{BS}*w\|_{L^4}
  \||\xi| w\|_{L^{4/3}} \,\le\, C \|w\|_{L^{4/3}} \||\xi| w\|_{L^{4/3}}
 \,\le\, C \|w\|_X^2\,. 
\end{equation}

Now, we fix $\alpha \in \R$, and we assume that $w \in D(\cL) \cap 
X_\perp$ is normalized so that $\|w\|_X = 1$. We consider the 
complex number
\[
  z \,=\, \langle w , (-\cL +\alpha\Lambda)w\rangle_X \,\in\, 
  \cN(-\cL_\perp +\alpha\Lambda_\perp)\,,
\] 
which satisfies $\Re(z) = -\langle w, \cL w \rangle_X \ge 1$ and 
$\Im(z) = \alpha \langle w, \Lambda w \rangle_X$. We know 
from \eqref{cLest}, \eqref{Lamad}, \eqref{Lamnl} that
\[
  \Re(z) \ge \frac14 \|\nabla w\|_X^2 + \frac18, \qquad 
  |\Im(z)| \,\le\, C|\alpha|\,(\|\nabla w\|_X + 1)\,,
\]
and this implies that $|\Im(z)| \le |\alpha|(c_8 \Re(z)^{1/2} + c_9)$
for some $c_8, c_9 > 0$. 
\end{proof}

The semigroup estimates given by Propositions~\ref{prop:semi1} and
\ref{prop:semi2} are not sufficient by themselves to prove 
Theorem~\ref{thm:main}, mainly because the nonlinear term in equation 
\eqref{tildeweq} involves a derivative. Using the inverse Laplace 
formula, it is possible to obtain accurate bounds on the first order 
derivative $\nabla e^{\tau(\cL-\alpha\Lambda)}$ for large times, but the 
problem is that we also need an estimate for short times that
does not blow up in the large circulation limit $|\alpha| \to 
\infty$. For the semigroup itself (without derivative), such 
an estimate was available without any pain thanks to the 
dissipativity properties of the generator $\cL-\alpha\Lambda$. 
In a similar spirit, the following elementary result will allow 
us in Section~\ref{sec4} to control the nonlinearities for short
times without loosing any power of the circulation parameter 
$|\alpha|$.

\begin{lem}\label{lem:dissip}
There exists a constant $C_0 > 0$ such that, for any $\alpha \in \R$, 
any $T > 0$, and any $f \in C^0([0,T],X^2)$, we have the 
estimate
\begin{equation}\label{dissipest}
  \Bigl\|\int_0^\tau e^{(\tau-s)(\cL-\alpha\Lambda)} \,\div f(s)
  \dd s\Bigr\|_X^2 \,\le\, C_0 \int_0^\tau \|f(s)\|_X^2 \dd s\,,
  \qquad \tau \in [0,T]\,.
\end{equation}
\end{lem}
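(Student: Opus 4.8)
The plan is to reduce the estimate to an $L^2$-in-time bound for the semigroup acting on a divergence, which is the standard ``maximal regularity of parabolic type'' phenomenon. Since the $X$-norm is the orthogonal sum of the $Y_n$-components and $\cL-\alpha\Lambda$ preserves each $Y_n$, it suffices — at least in spirit — to prove a uniform-in-$n$ estimate for each block; but it is cleaner to argue directly in $X$. First I would observe that the generator $A_\alpha := \cL-\alpha\Lambda$ is, by Proposition~\ref{prop:dissip}, $m$-dissipative in $X$ (even $A_\alpha+1$ is $m$-dissipative on $X_0 \supset \ker(\Lambda)^\perp$, but dissipativity on all of $X$ is all we need here). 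Hence $e^{\tau A_\alpha}$ is a contraction semigroup, and the key point is the smoothing inequality
\begin{equation}\label{eq:smoothing}
  \|\nabla e^{\tau A_\alpha} w\|_X \,\le\, \frac{C}{\sqrt{\tau}}\,\|w\|_X\,,
  \qquad \tau > 0\,,
\end{equation}
with $C$ independent of $\alpha$. This is where the structure of $A_\alpha$ matters: writing $A_\alpha = \cL - \alpha\Lambda$ with $\cL$ selfadjoint and $\Lambda$ skew-adjoint, for $w\in D(\cL)$ one has $\Re\langle w, A_\alpha w\rangle_X = \langle w,\cL w\rangle_X$, and by \eqref{cLdef} (or by conjugating to the harmonic oscillator as in \eqref{Lconj}) this yields $\langle w,\cL w\rangle_X \le -\tfrac14\|\nabla w\|_X^2 + C\|w\|_X^2$, exactly as in \eqref{cLest} above (the $X_1$ restriction there is not needed for the gradient term). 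Applying this energy identity to $w(\tau)=e^{\tau A_\alpha}w_0$ and integrating gives $\int_0^1 \|\nabla w(\tau)\|_X^2\dd\tau \le C\|w_0\|_X^2$; combined with the contraction property and analyticity (Proposition~\ref{prop:semi1} shows the block semigroups are analytic, and $A_\alpha$ generates an analytic semigroup in $X$ since $\Lambda$ is relatively bounded with relative bound $0$ after the harmonic-oscillator conjugation — alternatively one localizes in $n$) one upgrades the averaged bound to the pointwise bound \eqref{eq:smoothing}; the constant is $\alpha$-independent because the $\alpha\Lambda$ term, being skew, drops out of the real part of every energy estimate.

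Granting \eqref{eq:smoothing}, write $S := \div$, so that $S$ maps $X^2\to D(A_\alpha)'$ and, dually to \eqref{eq:smoothing}, $e^{\tau A_\alpha}\,\div : X^2 \to X$ has norm $\le C\tau^{-1/2}$. The operator
\[
  (\mathcal{T}f)(\tau) \,=\, \int_0^\tau e^{(\tau-s)A_\alpha}\,\div f(s)\dd s
\]
then has a kernel $K(\tau,s) = e^{(\tau-s)A_\alpha}\,\div$ with $\|K(\tau,s)\|_{X^2\to X}\le C(\tau-s)^{-1/2}\1_{\{s<\tau\}}$, whose $s$-integral over $[0,\tau]$ is $\le 2C\sqrt{\tau} \le 2C\sqrt{T}$ — but that only gives an $L^\infty_\tau$ bound with a $\sqrt T$ loss, which is \emph{not} what \eqref{dissipest} claims. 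To get the clean $L^2_s$ bound on the right-hand side (and no $T$-dependence), the honest route is the genuine maximal-regularity / ``dual semigroup'' argument: test $\mathcal{T}f(\tau)$ against an arbitrary $g\in X$, use $\langle e^{(\tau-s)A_\alpha}\div f(s),g\rangle_X = -\langle f(s),\nabla e^{(\tau-s)A_\alpha^*}g\rangle_{X^2}$ (here $A_\alpha^* = \cL + \alpha\Lambda$ satisfies the same smoothing estimate \eqref{eq:smoothing} by the identical argument), and then invoke the scalar fact that for a contraction analytic semigroup $\int_0^\infty \|\nabla e^{\sigma A_\alpha^*}g\|_X^2\dd\sigma \le C\|g\|_X^2$ uniformly, which is precisely the integrated energy identity run on $[0,\infty)$ — legitimate because $\|e^{\sigma A_\alpha^*}g\|_X\to 0$. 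Combining via Cauchy–Schwarz in $s$ and Young's inequality (the half-line $L^2$ bound on $\|\nabla e^{\cdot A_\alpha^*}g\|_X$ plays the role of an $L^2$ convolution kernel), one obtains $|\langle \mathcal{T}f(\tau),g\rangle_X| \le C\big(\int_0^\tau\|f(s)\|_X^2\dd s\big)^{1/2}\|g\|_X$, hence \eqref{dissipest} with $C_0$ absolute.

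The main obstacle is making the maximal-regularity step \eqref{dissipest} genuinely uniform in $\alpha$ and free of $T$: the naive kernel estimate loses a factor $\sqrt T$, so one must really exploit the square-function bound $\int_0^\infty\|\nabla e^{\sigma A_\alpha^*}g\|_X^2\dd\sigma\lesssim\|g\|_X^2$ rather than a pointwise-in-time gradient bound. The uniformity in $\alpha$ is then automatic because, as stressed above, the only place $\alpha$ enters — the term $\alpha\Lambda$ — is skew-symmetric and therefore invisible to every real-part energy estimate; this is the same mechanism that makes the semigroup a contraction for all $\alpha$ in Proposition~\ref{prop:semi2}. A secondary technical point is justifying the duality manipulations and the analyticity of $e^{\tau A_\alpha}$ on all of $X$ (as opposed to each $Y_n$-block, where Proposition~\ref{prop:semi1} already records it); this can be handled either by the harmonic-oscillator conjugation, which turns $A_\alpha$ into $L - \alpha\Lambda$ with $L$ selfadjoint negative and $\Lambda$ skew and $L$-bounded with relative bound $0$, or simply by assembling the uniform block estimates of Proposition~\ref{prop:semi1} through the orthogonal decomposition \eqref{Xdecomp}.
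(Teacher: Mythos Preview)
Your approach is unnecessarily roundabout, and there is a concrete error in the duality step. The paper's proof is a direct energy estimate on the function $w(\tau)=\int_0^\tau e^{(\tau-s)(\cL-\alpha\Lambda)}\div f(s)\dd s$ itself, with no semigroup smoothing, no duality, and no maximal regularity machinery. One simply observes that $w$ solves $\partial_\tau w=(\cL-\alpha\Lambda)w+\div f$ with $w(0)=0$ and $w(\tau)\in X_0$, computes
\[
  \tfrac12\tfrac{\D}{\D\tau}\|w\|_X^2 \,=\, \langle w,\cL w\rangle_X + \langle w,\div f\rangle_X
\]
(skew-symmetry of $\Lambda$ kills the $\alpha$-term), integrates by parts in the weighted space to get $\langle w,\div f\rangle_X = -\langle \nabla w + \tfrac{\xi}{2}w,\,f\rangle_{X^2}$, and then uses Young's inequality together with the coercivity estimate $-\langle w,\cL w\rangle_X \ge c(\|\nabla w\|_X^2+\|\xi w\|_X^2)$ on $X_0$ (Lemma~\ref{lem:E}) to absorb the gradient and weight terms. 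This gives $\tfrac{\D}{\D\tau}\|w\|_X^2 \le C_0\|f\|_X^2$ directly; integrating yields \eqref{dissipest}. Everything is elementary and the $\alpha$-independence is manifest.

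Your duality identity $\langle e^{(\tau-s)A_\alpha}\div f(s),g\rangle_X = -\langle f(s),\nabla e^{(\tau-s)A_\alpha^*}g\rangle_{X^2}$ is wrong as written: in the Gaussian-weighted space $X$ the adjoint of $\div$ is $-(\nabla+\tfrac{\xi}{2})$, not $-\nabla$, because $\nabla G^{-1}=\tfrac{\xi}{2}G^{-1}$. So the square-function bound you need is $\int_0^\infty\|(\nabla+\tfrac{\xi}{2})e^{\sigma A_\alpha^*}g\|_X^2\dd\sigma\le C\|g\|_X^2$, and this in turn requires $g\in X_0$ (otherwise take $g=G$, for which the semigroup is stationary and the integral diverges). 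Restricting to $g\in X_0$ is legitimate since $w(\tau)\in X_0$, and then the square-function bound does follow from the energy identity and the coercivity of $-\cL$ on $X_0$---but notice that this is exactly the same energy computation the paper performs directly on $w$, so the detour through duality and smoothing buys nothing. The analyticity discussion is likewise unnecessary: no pointwise-in-time gradient bound is ever used.
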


\begin{proof}
Denote $w(\tau) = \int_0^\tau e^{(\tau-s)(\cL-\alpha\Lambda)} \,\div f(s)
\dd s$ for $\tau \in [0,T]$. Then $w \in C^0([0,T],X)$ is 
the unique solution of the linear evolution equation
\begin{equation}\label{wlineq}
  \partial_\tau w \,=\, (\cL - \alpha\Lambda) w + \div f\,, 
  \qquad \tau \in [0,T]\,,
\end{equation}
with initial data $w(0) = 0$. Moreover it is clear that 
$w(\tau) \in X_0$ for any $\tau \in [0,T]$, because the
source term in \eqref{wlineq} has zero mean over $\R^2$. 
A direct calculation, using the skew-symmetry of the 
operator $\Lambda$ in $X$, shows that
\begin{equation}\label{wlinest}
  \frac12\frac{\D}{\D \tau} \|w(\tau)\|_X^2 \,=\, \langle w, \cL w 
  \rangle_X + \langle w, \div f \rangle_X\,. 
\end{equation}
As $w \in X_0$, we know from Lemma~\ref{lem:E} that
\begin{equation}\label{cLest2}
  \langle w, \cL w \rangle_X \,\le\, -\frac16 \|\nabla w\|_X^2  
  -\frac{1}{96} \|\xi w\|_X^2 - \frac{1}{12} \|w\|_X^2\,.
\end{equation}
On the other hand, integrating by parts and using the fact that
$\nabla G^{-1} = \frac12 \xi G^{-1}$, we obtain
\begin{equation}\label{fest}
  \langle w, \div f \rangle_X \,=\, -\int_{\R^2} G^{-1} 
  f \cdot \Bigl(\nabla w + \frac{\xi}2 w\Bigr)\dd\xi
  \,\le\, \epsilon \bigl(\|\nabla w\|_X^2 + \|\xi w\|_X^2\bigr)
  + \frac{C}{\epsilon}\,\|f\|_X^2\,.
\end{equation}
If we take $\epsilon > 0$ small enough, we can combine \eqref{wlinest}, 
\eqref{cLest2} and \eqref{fest} to obtain the differential 
inequality
\[
   \frac{\D}{\D \tau} \|w(\tau)\|_X^2 \,\le\, -\frac16 \|w(\tau)\|_X^2 
   + C_0  \|f(\tau)\|_X^2 \,\le\, C_0  \|f(\tau)\|_X^2\,, \qquad 
   \tau \in [0,T]\,,
\]
for some positive constant $C_0$, and \eqref{dissipest} follows 
upon integrating over $\tau$.
\end{proof}

\section{Nonlinear stability and relaxation to axisymmetry}
\label{sec4}

Equipped with the semigroups estimates derived in Section~\ref{sec3}, 
we now study the nonlinear stability of the equilibrium $w = 
\alpha G$ for the rescaled vorticity equation \eqref{weq}. 
We assume that the circulation parameter $\alpha \in \R$ satisfies 
$|\alpha| \ge \alpha_0$, where $\alpha_0 > 0$ is large enough and 
will be determined later. Given any $T > 0$, we consider a 
solution $w \in C^0([0,T],X)$ of \eqref{weq} with initial data 
$w_0 = \alpha G + \tilde w_0$, where $\tilde w_0 \in X$ satisfies 
$\|\tilde w_0\|_X \le C_3 |\alpha|$ for some small constant $C_3 > 0$.

\subsection{Preliminaries}\label{subsec4.1}

We start with two elementary observations which allow us to concentrate
on the situation where the initial perturbation $\tilde w_0$ 
belongs to the subspace $X_1$ defined by \eqref{X1def}. 

\medskip\noindent {\bf Observation 1\:} Without loss of generality, we 
can assume that $\tilde w_0 \in X_0$, where $X_0 \subset X$ is the subspace
defined in \eqref{X0def}. Indeed, if $\tilde \alpha = \int_{\R^2}
\tilde w_0 \dd\xi \neq 0$, we decompose
\[
  w_0 \,=\, \hat \alpha G + \hat w_0\,, \qquad \hbox{where}\quad
  \hat \alpha \,=\, \alpha + \tilde \alpha \quad \hbox{and}
  \quad \hat w_0 \,=\, \tilde w_0 - \tilde \alpha G\,.
\]
Then $\hat w_0 \in X_0$ by construction, and since $|\tilde \alpha| \le 
C \|\tilde w_0\|_X \le C C_3 |\alpha|$ we can assume that $|\hat\alpha| 
\ge |\alpha|/2 \ge \alpha_0/2$ by taking the constant $C_3$
sufficiently small. The problem is thus reduced to the stability
analysis of the modified vortex $\hat\alpha G$ with respect to
perturbations $\hat w_0 \in X_0$, and we still have $\|\hat w_0\|_X
\le C_3' |\hat \alpha|$ for some small constant $C_3'$. 

\medskip\noindent {\bf Observation 2\:} Without loss of generality, we can 
further assume that $\tilde w_0 \in X_1$, where $X_1 \subset X$ 
is defined in \eqref{X1def}. Indeed, let $w$ be the solution 
of \eqref{weq} with initial data $\alpha G + \tilde w_0$, where
$\alpha \neq 0$ and $\tilde w_0 \in X_0$ satisfies $\|\tilde w_0\|_X 
\le C_3|\alpha|$. If $\tilde w_0 \notin X_1$, we introduce the 
first order moment
\[
  \eta \,=\, \frac{1}{\alpha} \int_{\R^2} \xi \,w_0(\xi)\dd\xi 
  \,=\, \frac{1}{\alpha} \int_{\R^2} \xi \,\tilde w_0(\xi)\dd\xi 
  \,\in\,\R^2\,,
\]
and we consider the modified vorticity $\hat w$ and velocity 
$\hat v$ defined by
\begin{equation}\label{etamodif}
  \hat w(\xi,\tau) \,=\, w(\xi + \eta\,e^{-\tau/2},\tau)\,, \qquad
  \hat v(\xi,\tau) \,=\, v(\xi + \eta\,e^{-\tau/2},\tau)\,. 
\end{equation}
It is straightforward to verify that the new functions $\hat w$, 
$\hat v$ satisfy the same equation \eqref{weq}, namely 
$\partial_\tau \hat w + \hat v \cdot \nabla \hat w = \cL \hat w$. 
In addition, the explicit expression
\[
  \hat w(\xi,0) - \alpha G(\xi) \,=\, \alpha\bigl(G(\xi+\eta) -
  G(\xi)\bigr) + \tilde w_0(\xi+\eta)\,, \qquad \xi \in \R^2\,,
\]
reveals that $\hat w(\cdot,0) - \alpha G \in X_1$ and $\|\hat
w(\cdot,0) - \alpha G\|_X \le C \|\tilde w_0\|_X \le CC_3
|\alpha|$. Thus the change of variables \eqref{etamodif} allows us to
reduce the stability analysis to perturbations in the subspace
$X_1$. In terms of the original variables, that transformation is
equivalent to choosing the parameter $x_0$ in \eqref{wvdef} to be the
{\em center of vorticity} of the distribution $\omega(\cdot,t)$, which is
well defined as soon as $\alpha \neq 0$ and preserved under 
the evolution defined by \eqref{omeq}. 

\subsection{Decomposition of the perturbations}\label{subsec4.2}

Taking into account the observations above, we assume henceforth 
that $\tilde w \in C^0([0,T],X_1)$ is a solution of \eqref{tildeweq}
with initial data $\tilde w_0 \in X_1$ satisfying $\|\tilde w_0\|_X 
\le C_3 |\alpha|$. According to Section~\ref{subsec2.1}, the 
perturbation space $X_1$ can decomposed as $X_1 = X_r \oplus X_\perp$, 
where $X_r = X_0 \cap Y_0$ is the subset of $X$ consisting of all radially
symmetric functions with zero average, and $X_\perp = \ker(\Lambda)^\perp$
is the orthogonal complement of $\ker(\Lambda)$ in $X$. We thus 
decompose the perturbed vorticity as 
\begin{equation}\label{wdecomp}
  \tilde w \,=\, \tilde w_r + \tilde w_\perp \,=\, P_r \tilde w + 
  P_\perp \tilde w\,,
\end{equation}
where $P_r = 1 - P_\perp$ is the orthogonal projection of $X_1$ onto $X_r$. 
If we introduce polar coordinates $(r,\theta)$ such that $\xi = (r\cos\theta,
r\sin\theta)$, we have the explicit expression
\begin{equation}\label{Prdef}
   (P_r \tilde w)(r) \,=\, \frac{1}{2\pi} \int_{-\pi}^\pi \tilde w(r,\theta)
   \dd\theta\,, \qquad r  > 0\,.
\end{equation}
The corresponding decomposition of the velocity is
\begin{equation}\label{vdecomp}
  \tilde v \,=\,  \tilde v_r + \tilde v_\perp \,=\, 
  K_{BS} * \tilde w_r + K_{BS} * \tilde w_\perp\,.
\end{equation}
Denoting $e_r = (\cos\theta,\sin\theta)$, $e_\theta = (-\sin\theta,
\cos\theta)$, we have the following elementary result\:

\begin{lem}\label{lem:decomp}
With the definitions \eqref{wdecomp}, \eqref{vdecomp}, 
we have $\tilde v_r \cdot \nabla \tilde w_r = 0$ and 
\begin{equation}\label{nonproj}
  P_r (\tilde v \cdot \nabla\tilde w) =  P_r (\tilde v_\perp \cdot 
  \nabla \tilde w_\perp) = \div Z[\tilde w,\tilde v]\,,
\end{equation}
where $Z$ is the vector field defined by
\begin{equation}\label{Zdef}
  Z[\tilde w,\tilde v] \,=\, P_r \bigl((\tilde v_\perp\cdot e_r)\tilde w_\perp
  \bigr) e_r\,. 
\end{equation}
\end{lem}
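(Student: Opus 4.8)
The plan is to establish Lemma~\ref{lem:decomp} by exploiting the
rotational symmetry of the Biot--Savart law together with the
structure of the advection term. First I would record the basic
symmetry facts: writing any $w \in X_1$ in polar coordinates, the
radial part $\tilde w_r$ is axisymmetric, so $\tilde v_r = K_{BS} *
\tilde w_r$ is a purely azimuthal field, $\tilde v_r = V_r(r)\,
e_\theta$ for some scalar function $V_r$. Consequently $\tilde v_r
\cdot \nabla \tilde w_r = \frac{V_r(r)}{r}\,\partial_\theta \tilde
w_r = 0$, since $\tilde w_r$ does not depend on $\theta$. This disposes
of the first claim.

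Next I would decompose the bilinear advection term using
\eqref{wdecomp} and \eqref{vdecomp}:
\[
  \tilde v\cdot\nabla\tilde w \,=\, \tilde v_r\cdot\nabla\tilde w_r
  + \tilde v_r\cdot\nabla\tilde w_\perp + \tilde v_\perp\cdot\nabla
  \tilde w_r + \tilde v_\perp\cdot\nabla\tilde w_\perp\,.
\]
The first term vanishes by the previous step. For the second term, I
would observe that $\tilde v_r\cdot\nabla\tilde w_\perp =
\frac{V_r(r)}{r}\,\partial_\theta \tilde w_\perp$; since
$\partial_\theta$ maps each Fourier mode $Y_n$ to itself and only
multiplies it by $in$, the angular average $P_r$ of this expression
is zero (equivalently, the $n=0$ Fourier component of
$\partial_\theta\tilde w_\perp$ vanishes). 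For the third term,
$\tilde v_\perp\cdot\nabla\tilde w_r = (\tilde v_\perp\cdot e_r)\,
\partial_r\tilde w_r$; here $\partial_r\tilde w_r$ is axisymmetric,
while $\tilde v_\perp = K_{BS}*\tilde w_\perp$ has no $n=0$ Fourier
mode (the Biot--Savart law commutes with rotations, so it preserves
the angular Fourier decomposition, and $\tilde w_\perp$ has zero
$n=0$ component), hence $\tilde v_\perp\cdot e_r$ has zero angular
average and so does the product. Therefore $P_r(\tilde v\cdot\nabla
\tilde w) = P_r(\tilde v_\perp\cdot\nabla\tilde w_\perp)$, which is
the first equality in \eqref{nonproj}.

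Finally, to obtain the divergence form, I would use that $\tilde
v_\perp$ is divergence-free to write $\tilde v_\perp\cdot\nabla\tilde
w_\perp = \div(\tilde w_\perp\,\tilde v_\perp)$. Splitting $\tilde
v_\perp = (\tilde v_\perp\cdot e_r)\,e_r + (\tilde v_\perp\cdot
e_\theta)\,e_\theta$, the contribution $(\tilde v_\perp\cdot
e_\theta)\,\tilde w_\perp\,e_\theta$ is a purely azimuthal vector
field whose divergence is $\frac1r\partial_\theta$ of a scalar, hence
has zero angular average; so $P_r\div(\tilde w_\perp\tilde v_\perp) =
P_r\div\bigl((\tilde v_\perp\cdot e_r)\tilde w_\perp\,e_r\bigr)$. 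Since
$\div$ (being rotation-covariant, as it takes a vector field to a
scalar) commutes with the angular averaging $P_r$ when applied to
radial vector fields of the form $f(r,\theta)e_r$ (one checks $P_r
\div(f e_r) = \div((P_r f)e_r)$ directly from $\div(fe_r) =
\frac1r\partial_r(rf)$), we get $P_r\div\bigl((\tilde v_\perp\cdot
e_r)\tilde w_\perp\,e_r\bigr) = \div\bigl(P_r((\tilde v_\perp\cdot
e_r)\tilde w_\perp)\,e_r\bigr) = \div Z[\tilde w,\tilde v]$. I expect
the main subtlety to be the bookkeeping of which terms survive angular
averaging and the care needed in justifying that $P_r$ commutes with
$\div$ in the relevant sense — in particular keeping track of the $e_r$
versus $e_\theta$ components, since only the $e_\theta$-directed pieces
have vanishing angular mean after applying $\div$, while the
$e_r$-directed piece genuinely contributes.
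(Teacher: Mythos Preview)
Your proposal is correct and follows essentially the same approach as the paper. The only organizational difference is that the paper immediately writes $\tilde v\cdot\nabla\tilde w = \div(\tilde v\,\tilde w)$ in polar coordinates as $\frac1r\partial_r(r(\tilde v\cdot e_r)\tilde w) + \frac1r\partial_\theta((\tilde v\cdot e_\theta)\tilde w)$ and reads off both equalities in \eqref{nonproj} at once from the facts that $\tilde v\cdot e_r = \tilde v_\perp\cdot e_r$ has zero angular mean, whereas you first kill the cross terms $\tilde v_r\cdot\nabla\tilde w_\perp$ and $\tilde v_\perp\cdot\nabla\tilde w_r$ under $P_r$ and only then pass to divergence form; the underlying observations are identical.
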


\begin{proof}
Since $\tilde w_r$ is radially symmetric, the associated velocity 
$\tilde v_r = K_{BS}*\tilde w_r$ is purely azimuthal, namely 
$\tilde v_r \cdot e_r = 0$, and this implies that $\tilde v_r \cdot 
\nabla \tilde w_r = 0$. On the other hand, as $\tilde v$ is 
divergence free, we have
\[
  \tilde v \cdot \nabla\tilde w \,=\, \div\bigl(\tilde v \tilde w\bigr)
  \,=\, \frac{1}{r}\partial_r \bigl(r (\tilde v \cdot e_r) \tilde w\bigr)
  + \frac{1}{r}\partial_\theta \bigl((\tilde v \cdot e_\theta) \tilde 
  w\bigr)\,.
\]
If we apply the projection $P_r$, the last term in the right-hand
side gives no contribution, and in the first term we have $\tilde v
\cdot e_r = \tilde v_\perp \cdot e_r$, as already observed. That 
quantity has zero average over the angular variable $\theta$, and 
this implies that $P_r\bigl((\tilde v_\perp \cdot e_r)\tilde w\bigr) = 
P_r\bigl((\tilde v_\perp \cdot e_r)\tilde w_\perp\bigr)$. Summarizing, 
we have shown that \eqref{nonproj} holds if $Z$ is defined by 
\eqref{Zdef}. 
\end{proof}

In view of Lemma~\ref{lem:decomp}, the perturbation equation 
\eqref{tildeweq} is equivalent to the coupled system
\begin{equation}
\begin{aligned}\label{wrperp}
  \partial_\tau \tilde w_r + P_r (\tilde v_\perp \cdot \nabla \tilde w_\perp) 
  \,&=\, \cL \tilde w_r\,, \\ 
  \partial_\tau \tilde w_\perp + \tilde v_r \cdot \nabla \tilde w_\perp 
  + \tilde v_\perp \cdot \nabla \tilde w_r + P_\perp (\tilde v_\perp \cdot 
  \nabla \tilde w_\perp) \,&=\, (\cL -\alpha\Lambda) \tilde w_\perp\,,
\end{aligned}
\end{equation}
which is the starting point of our analysis. The integrated version 
of \eqref{wrperp} is written in the form
\begin{equation}
\begin{aligned}\label{wrpint}
  \tilde w_r(\tau) \,&=\, e^{\tau\cL}\tilde w_r(0) - \int_0^\tau e^{(\tau-s)\cL} 
  \div Z[\tilde w(s),\tilde v(s)] \dd s\,, \\
  \tilde w_\perp(\tau) \,&=\, e^{\tau(\cL-\alpha\Lambda)} \tilde w_\perp(0) - 
  \int_0^\tau e^{(\tau-s)(\cL-\alpha\Lambda)} \div N[\tilde w(s),\tilde v(s)]\dd s\,,
\end{aligned}
\end{equation}
where
\begin{equation}\label{Ndef}
  N[\tilde w,\tilde v] \,=\, \tilde v_r\,\tilde w_\perp + \tilde v_\perp\,
  \tilde w_r + \tilde v_\perp\,\tilde w_\perp - Z[\tilde w,\tilde v]\,.
\end{equation}
According to Proposition~\ref{prop:semi2}, the semigroups in 
\eqref{wrpint} satisfy, for all $\tau \ge 0$, 
\begin{equation}\label{sgbounds}
  \|e^{\tau\cL} \tilde w_r\|_X \,\le\, e^{-\tau}\|\tilde w_r\|_X\,, \qquad 
  \|e^{\tau(\cL-\alpha\Lambda)} \tilde w_\perp\|_X \,\le\, \min(e^{-\tau},
  e^{-\mu(\tau-\tau_0)})\|\tilde w_\perp\|_X\,,
\end{equation}
where
\begin{equation}\label{mutaudef}
  \mu \,=\, c_5 |\alpha|^{1/3}\,, \qquad \tau_0 \,=\, \frac{\log(c_7
  |\alpha|^{2/3})}{\mu}\,.
\end{equation}
In what follows we assume that the constant $\alpha_0 \ge 2$ is large 
enough so that, if $|\alpha| \ge \alpha_0$, the quantities defined 
in \eqref{mutaudef} satisfy $\mu \ge 1$, $0 < \tau_0 \le 1$, 
and $\mu \tau_0 \ge 3$.  

\subsection{Nonlinear estimates}\label{subsec4.3}

Keeping the same notations as in the previous section, our goal is to 
control the solution of \eqref{wrpint} using the following norm
\begin{equation}\label{defnorm}
  \cM \,=\, \cM(T) \,=\, \sup_{0 \le \tau \le T} \Bigl(\|\tilde w_r(\tau)\|_X 
  + e^{\tau/\tau_0} \|\tilde w_\perp(\tau)\|_X\Bigr)\,.  
\end{equation}
The main technical result of this section is\:

\begin{lem}\label{lem:nonlin} There exist positive constants $C_4$, 
$C_5$ such that, for any $\alpha \in \R$ with $|\alpha| \ge \alpha_0$, 
any $T > 0$, and any solution $(\tilde w_r,\tilde w_\perp) \in C^0([0,T], 
X_r \oplus X_\perp)$ of \eqref{wrpint} satisfying $\cM \le |\alpha|$, 
the following estimate holds\:
\begin{equation}\label{nonlinest}
  \cM \,\le\, C_4 \|\tilde w_0\|_X + C_5 \bigl(\tau_0 \log|\alpha|\bigr)^{1/2}
  \cM^2 + C_5\bigl(\tau_0 \log_+\cM^{-1}\bigr)^{1/2}\cM^2\,,
\end{equation}
where $\tilde w_0 = \tilde w_r(0) + \tilde w_\perp(0)$. Here 
$\log_+(x) = \max\bigl(\log(x),0\bigr)$ for any $x > 0$. 
\end{lem}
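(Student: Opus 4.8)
The plan is to control $\tilde w_r$ and $\tilde w_\perp$ separately, starting from the Duhamel formulas \eqref{wrpint}, and then to close an inequality for the quantity $\cM$. The linear terms are immediate: by \eqref{sgbounds}, $\|e^{\tau\cL}\tilde w_r(0)\|_X\le\|\tilde w_0\|_X$, while
\[
  e^{\tau/\tau_0}\,\|e^{\tau(\cL-\alpha\Lambda)}\tilde w_\perp(0)\|_X\,\le\,\Bigl(\sup_{\tau\ge0}e^{\tau/\tau_0}\min(e^{-\tau},\,e^{-\mu(\tau-\tau_0)})\Bigr)\,\|\tilde w_0\|_X\,,
\]
and a short computation using $0<\tau_0\le1$ and $\mu\tau_0\ge3$ bounds the supremum by an absolute constant. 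Thus the linear parts contribute $\cO(\|\tilde w_0\|_X)$ to $\cM$.

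The heart of the matter is the quadratic terms. Combining the bounds on the Biot--Savart law from Section~\ref{subsec5.2} with a logarithmic Sobolev inequality (of Brezis--Gallouet type) for $\|K_{BS}*a\|_{L^\infty}$, the vector fields $Z$ and $N$ of \eqref{Zdef}, \eqref{Ndef} satisfy pointwise bounds of the form $\|Z(s)\|_X+\|N(s)\|_X\le C\,(\|\tilde w_r(s)\|_X+\|\tilde w_\perp(s)\|_X)\,\|\tilde w_\perp(s)\|_X\,\Theta(s)$, where $\Theta(s)$ gathers the logarithmic factors. Inserting $\|\tilde w_r(s)\|_X\le\cM$ and $\|\tilde w_\perp(s)\|_X\le\cM\,e^{-s/\tau_0}$, and integrating over $s\in[a,\tau]$ --- using Jensen's inequality together with an energy inequality that controls $\int_0^T\|\nabla\tilde w_\perp(s)\|_X^2\dd s$ in order to absorb $\Theta$, and the hypothesis $\cM\le|\alpha|$ to keep each logarithm at most a power of $|\alpha|$ --- one arrives, for every $0\le a\le\tau\le T$, at
\[
  \Bigl(\int_a^\tau\bigl(\|Z(s)\|_X^2+\|N(s)\|_X^2\bigr)\dd s\Bigr)^{1/2}\,\le\,C\bigl(\tau_0\log|\alpha|\bigr)^{1/2}\cM^2+C\bigl(\tau_0\log_+\cM^{-1}\bigr)^{1/2}\cM^2\,.
\]
Here $\log|\alpha|$ comes from $\log(1/\tau_0)\simeq\tfrac13\log|\alpha|$, reflecting the fine radial scales produced by the differential rotation, and $\log_+\cM^{-1}$ from the initial layer $s\approx0$ where no smoothing is available.

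It remains to feed these bounds back into \eqref{wrpint}. For the radial component $\Lambda$ vanishes, so Lemma~\ref{lem:dissip} applied with $f=Z$ gives directly $\|\tilde w_r(\tau)\|_X\le\|\tilde w_0\|_X+C_0^{1/2}\bigl(\int_0^\tau\|Z(s)\|_X^2\dd s\bigr)^{1/2}$. For $\tilde w_\perp$ the obstacle --- and this is the crux of the whole proof --- is that $\div N$ contains a derivative that the analyticity of $e^{\tau(\cL-\alpha\Lambda)}$ cannot control uniformly in $\alpha$ over an $\cO(1)$ time. I would circumvent it by splitting the Duhamel integral at time $\tau-2\tau_0$: on the short window $[\max(0,\tau-2\tau_0),\tau]$, where the weight $e^{s/\tau_0}$ varies by a bounded factor, use Lemma~\ref{lem:dissip}, which swallows the divergence at no cost in $\alpha$; the remaining part is $e^{2\tau_0(\cL-\alpha\Lambda)}$ applied to the nonlinear integral of \eqref{wrpint} evaluated at time $\tau-2\tau_0$, and Proposition~\ref{prop:semi2} gives $\|e^{2\tau_0(\cL-\alpha\Lambda)}\|_{X_\perp\to X_\perp}\le(c_7|\alpha|^{2/3})^{-1}\le\tfrac12$ for $|\alpha|\ge\alpha_0$. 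Setting $\phi(\tau)=e^{\tau/\tau_0}\,\|\tilde w_\perp(\tau)-e^{\tau(\cL-\alpha\Lambda)}\tilde w_\perp(0)\|_X$, this produces the recursion $\phi(\tau)\le\tfrac12\,\phi(\tau-2\tau_0)+C(\tau_0\log|\alpha|)^{1/2}\cM^2+C(\tau_0\log_+\cM^{-1})^{1/2}\cM^2$ for $\tau>2\tau_0$ --- the range $\tau\le2\tau_0$ being handled directly by Lemma~\ref{lem:dissip} on $[0,\tau]$ --- whose iteration bounds $\phi$ uniformly by the same right-hand side. Adding the linear contributions and the radial and non-radial estimates, and using $\sqrt{a+b}\le\sqrt a+\sqrt b$, yields \eqref{nonlinest}. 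The main difficulty is the derivative-versus-decay-weight tension just described, resolved by pairing Lemma~\ref{lem:dissip} near the diagonal with the enhanced-decay gain over windows of width $2\tau_0$ far from it; a secondary nuisance is the careful accounting of the logarithmic factors.
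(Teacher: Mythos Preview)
Your skeleton is right: handle the linear parts with \eqref{sgbounds}, control the Duhamel integrals with Lemma~\ref{lem:dissip} on short windows, and use the enhanced decay of Proposition~\ref{prop:semi2} across windows of length $\sim\tau_0$ to recover the weight $e^{\tau/\tau_0}$. The paper does essentially the same thing, decomposing $[0,\tau]$ into $N{+}1$ subintervals of length $\tau_0$ rather than setting up your recursion for $\phi$, but the two devices are equivalent.

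The genuine gap is in your treatment of the logarithmic factor $\Theta$. You propose to absorb it via ``Jensen's inequality together with an energy inequality that controls $\int_0^T\|\nabla\tilde w_\perp(s)\|_X^2\dd s$'', but this is both incomplete and circular. Incomplete, because the term $\tilde v_\perp\tilde w_r$ in $N$ cannot be bounded by $\|\tilde v_\perp\|_{L^\infty}\|\tilde w_r\|_X$ without losing the indispensable decay factor $e^{-s/\tau_0}$: once the monotonicity trick $a\mapsto a^2(1+\log_+(b/a))$ replaces $\|\tilde w_\perp\|_X$ by $\cM$ in the Br\'ezis--Gallou\"et bound for $\|\tilde v_\perp\|_{L^\infty}$, no decay in $s$ is left. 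One is forced instead to an estimate of the type in Lemma~\ref{lem:BS4}, which keeps an explicit factor $\|\tilde w_\perp\|_X$ but places the logarithm on the \emph{weighted} norm $\|G^{-1/2}\tilde w_r\|_{L^3}$; this quantity is not controlled by any $H^1$ information on $\tilde w_\perp$. The same issue arises symmetrically for $\tilde v_r\tilde w_\perp$, which needs $\|\tilde w_r\|_{L^3}$. Circular, because any energy identity for $\tilde w_\perp$ (or $\tilde w_r$) derived from \eqref{wrperp} bounds $\int\|\nabla\tilde w\|_X^2$ only in terms of $\int\|N\|_X^2$ (or $\int\|Z\|_X^2$), which is precisely the quantity you are trying to estimate. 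The paper closes this loop differently: it first establishes, from the integral equation \eqref{wint} for the \emph{full} vorticity $w=\alpha G+\tilde w$ and the hypothesis $\cM\le|\alpha|$, the a priori bound $\|G^{-1/2}\tilde w(\tau)\|_{L^3}\le C|\alpha|^2 a(\tau)^{-1/6}$. That bound is the missing ingredient; it is also the true source of the factor $\log|\alpha|$ in \eqref{nonlinest} (your attribution to $\log(1/\tau_0)$ accounts for only a subordinate contribution), while $\log_+\cM^{-1}$ arises from the monotonicity step itself, not from any initial layer near $s=0$.
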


\begin{proof} We first establish a simple a priori estimate that will 
be useful later. By construction, the function $w(\tau) = \alpha G + 
\tilde w_r(\tau) + \tilde w_\perp(\tau)$ is a solution of the rescaled 
vorticity equation \eqref{weq}, hence of the integral equation \eqref{wint} 
below. We also know that $\|w(\tau)\|_X \le C|\alpha|$ for all 
$\tau \in [0,T]$, because $\cM \le |\alpha|$ by assumption. Our 
goal is to control the norm $\|G^{-1/2} w(\tau)\|_{L^3}$ for any 
$\tau \in [0,T]$ using the representation \eqref{wint} and the 
semigroup bounds recalled in Section~\ref{subsec5.1}. Applying
estimate \eqref{LpLq} with $q = 3$, $p = 2$, inequality
\eqref{LpLqder} with $q = 3$, $p = 4/3$, and finally estimate 
\eqref{Gv1w2} with $w_1 = w_2 = w$, we obtain for any $\tau \in (0,T]$\: 
\begin{align*}
  \|G^{-1/2} w(\tau)\|_{L^3} \,&\le\, \frac{C}{a(\tau)^{1/6}}\,\|w(0)\|_X
  + \int_0^\tau \frac{C e^{-(\tau-s)/2}}{a(\tau{-}s)^{11/12}} \,\|G^{-1/2} 
  v(s) w(s) \|_{L^{4/3}}\dd s\,, \\
  \,&\le\, \frac{C}{a(\tau)^{1/6}}\,\|w(0)\|_X
  + \int_0^\tau \frac{C e^{-(\tau-s)/2}}{a(\tau{-}s)^{11/12}} 
  \,\|w(s)\|_X^2 \dd s\,, 
\end{align*}
where $a(\tau) = 1 - e^{-\tau}$. As $|\alpha| \ge 1$, it readily 
follows that 
\begin{equation}\label{apriori}
  \|G^{-1/2} \tilde w_r(\tau)\|_{L^3} + \|G^{-1/2} \tilde w_\perp(\tau)\|_{L^3} 
  \,\le\, \frac{C_6|\alpha|^2}{a(\tau)^{1/6}}\,, \qquad 0 < \tau \le T\,,
\end{equation}
for some positive constant $C_6$. 

We now focus on the proof of \eqref{nonlinest}. The overall strategy
is to estimate the right-hand side of system \eqref{wrpint} using the
semigroup bounds \eqref{sgbounds} and the integral estimate 
\eqref{dissipest}. We start with the equation satisfied by the
radially symmetric component $\tilde w_r$, which is simpler to 
handle. We know that $\| e^{\tau\cL}\tilde w_r(0)\|_X \le \|\tilde w_r(0)\|_X$, 
and using Lemma~\ref{lem:dissip} together with definition \eqref{Zdef} 
we obtain
\begin{align*}
  \Bigl\|\int_0^\tau e^{(\tau-s)\cL} \div Z[\tilde w(s),\tilde v(s)]\dd s\Bigr\|_X^2
  \,&\le\, C_0 \int_0^\tau \|\tilde w_\perp(s) \tilde v_\perp(s)\|_X^2 \dd s \\
  \,&\le\, C_0 \int_0^\tau \|\tilde w_\perp(s)\|_X^2 \,\|\tilde v_\perp(s)
  \|_{L^\infty}^2 \dd s\,.
\end{align*}
To control the $L^\infty$ norm of the velocity field $\tilde v_\perp(s)$, 
we apply the results of Section~\ref{subsec5.2}. Since $\|\tilde 
w_\perp(s)\|_{L^1\cap L^2} \le C\|\tilde w_\perp(s)\|_X$ and $\|\tilde 
w_\perp(s)\|_X \le \cM$, it follows from Lemma~\ref{lem:BS3} that
\begin{align*}
  \|\tilde v_\perp(s)\|_{L^\infty}^2 \,&\le\, C\|\tilde w_\perp(s)\|_X^2 
  \Bigl(1 + \log_+ \frac{\|\tilde w_\perp(s)\|_{L^3}}{\|\tilde w_\perp(s)\|_X}
  \Bigr) \\ \,&\le\, C\,\cM^2 \Bigl(1 + \log_+ \frac{\|\tilde w_\perp(s)\|_{L^3}}{
  \cM}\Bigr) \,\le\, C\cM^2 \Bigl(1 + \log_+ \frac{C_6|\alpha|^2}{a(s)^{1/6}
  \cM}\Bigr)\,,
\end{align*}
where in the second inequality we used the fact that the map 
$a \mapsto a^2\bigl(1 + \log_+(b/a)\bigr)$ is strictly increasing 
over $\R_+$ for any $b > 0$, and in the last inequality we 
invoked the a priori estimate \eqref{apriori}. Observing that 
$\log_+(ab) \le \log_+(a) + \log_+(b)$ and $\|\tilde w_\perp(s)\|_X \le 
\cM e^{-s/\tau_0}$, we obtain after integrating over time
\[
  \int_0^\tau \|\tilde w_\perp(s)\|_X^2 \,\|\tilde v_\perp(s)
  \|_{L^\infty}^2 \dd s \,\le\,  C \cM^4 \tau_0 \Bigl(1+\log\frac{
  |\alpha|}{\tau_0} + \log_+\frac{1}{\cM}\Bigr)\,,
\]
where $\log(|\alpha|/\tau_0) \le C\log|\alpha|$ in view of definition
\eqref{mutaudef}. Altogether we have shown that
\begin{equation}\label{wrest}
  \sup_{0 \le \tau \le T}\|\tilde w_r(\tau)\|_X \,\le\, \|\tilde w_r(0)\|_X + 
  C \bigl(\tau_0 \log|\alpha|\bigr)^{1/2} \cM^2 + C\bigl(\tau_0 
  \log_+\cM^{-1}\bigr)^{1/2}\cM^2\,.
\end{equation}

We next consider the second equation in \eqref{wrpint}. When $\tau \le 
\tau_0$ the spatial weight $e^{\tau/\tau_0}$ does not play any role
in definition \eqref{defnorm}, so repeating the arguments above 
we obtain an estimate of the form \eqref{wrest} for $\|\tilde 
w_\perp(\tau)\|_X$ if $\tau \in [0,\tau_0]$. In the rest of the 
proof, we thus assume that $\tau > \tau_0$, and we decompose $\tau = 
N\tau_0 + \tau_1$ where $N \in \N$ and $\tau_0 < \tau_1 \le 2\tau_0$. 
Since $\mu \tau_0 \ge 1$, we obviously have
\begin{equation}\label{wperp1}
  \|e^{\tau(\cL-\alpha\Lambda)} \tilde w_\perp(0)\|_X \,\le\, e^{-\mu(\tau-\tau_0)}
  \|\tilde w_\perp(0)\|_X \,\le\, e^{1-\tau/\tau_0} \|\tilde w_\perp(0)\|_X\,.
\end{equation}
On the other hand, the integral term can be decomposed 
in the following way
\begin{equation}\label{intdecomp}
  \int_0^\tau e^{(\tau-s)(\cL-\alpha\Lambda)} \div N[\tilde w(s),\tilde v(s)]\dd s 
  \,=\, I_0(\tau) + \sum_{k=1}^N J_k(\tau)\,,
\end{equation}
where
\begin{align*}
  I_0(\tau) \,&=\, \int_{N\tau_0}^\tau e^{(\tau-s)(\cL-\alpha\Lambda)} \div 
  N[\tilde w(s),\tilde v(s)] \dd s\,, \\
  J_k(\tau) \,&=\, e^{(\tau-k\tau_0)(\cL-\alpha\Lambda)} \int_{(k-1)\tau_0}^{k\tau_0} 
  e^{(k\tau_0-s)(\cL-\alpha\Lambda)} \div N[\tilde w(s),\tilde v(s)]\dd s\,.
\end{align*}

To control the nonlinear term $N[\tilde w,\tilde v]$ defined in 
\eqref{Ndef}, we again apply the results of Section~\ref{subsec5.2}. 
We first observe that $\|\tilde v_r\,\tilde w_\perp + \tilde v_\perp\,
\tilde w_\perp - Z[\tilde w,\tilde v]\|_X \le C \bigl(\|\tilde v_r\|_{L^\infty} 
+ \|\tilde v_\perp\|_{L^\infty}\bigr) \|w_\perp\|_X$, and we use 
Lemma~\ref{lem:BS3} to obtain
\begin{align*}
  \|\tilde v_r\|_{L^\infty}^2 \,&\le\, C\,\|\tilde w_r\|_X^2 \Bigl(1 + 
  \log_+ \frac{\|\tilde w_r\|_{L^3}}{\|\tilde w_r\|_X}\Bigr) \,\le\, 
  C \cM^2 \Bigl(1 + \log_+ \frac{\|\tilde w_r\|_{L^3}}{\cM}\Bigr)\,, \\
  \|\tilde v_\perp\|_{L^\infty}^2 \,&\le\, C\,\|\tilde w_\perp\|_X^2 
  \Bigl(1 + \log_+ \frac{\|\tilde w_\perp\|_{L^3}}{\|\tilde w_\perp\|_X}
  \Bigr) \,\le\, C \cM^2 \Bigl(1 + \log_+ \frac{\|\tilde w_\perp\|_{L^3}}{
  \cM}\Bigr)\,.
\end{align*}
The last term $\tilde v_\perp \tilde w_r$ in $N[\tilde w,\tilde v]$ is 
estimated directly by applying Lemma~\ref{lem:BS4} with $\omega_1 = 
G^{-1/2}\tilde w_r$ and $\omega_2 = \tilde w_\perp$. This gives
\begin{align*}
  \|\tilde v_\perp \tilde w_r\|_X^2 \,&\le\, C\,\|\tilde w_r\|_X^2 
  \,\|\tilde w_\perp\|_X^2 \Bigl(1 + \log_+ \frac{\|G^{-1/2}\tilde w_r\|_{L^3}}{
  \|\tilde w_r\|_X}\Bigl) \\
  \,&\le\, C \cM^2 \,\|\tilde w_\perp\|_X^2 \Bigl(1 + \log_+ \frac{
  \|G^{-1/2}\tilde w_r\|_{L^3}}{\cM}\Bigl)\,.
\end{align*}
Summarizing both cases and using the a priori bound \eqref{apriori}, 
we arrive at
\begin{equation}\label{Nest}
  \|N[\tilde w(s),\tilde v(s)]\|_X^2 \,\le\, C \cM^4\,e^{-2s/\tau_0} 
  \Bigl(1 + \log_+ \frac{C_6|\alpha|^2}{a(s)^{1/6}\cM}\Bigl)\,,
  \qquad 0 < s \le T\,.
\end{equation}

We now estimate the various terms in \eqref{intdecomp}. Using 
\eqref{Nest} and Lemma~\ref{lem:dissip}, we first obtain
\[
  \|I_0(\tau)\|_X^2 \,\le\, C_0 \int_{N\tau_0}^\tau  \|N[\tilde w(s),
  \tilde v(s)]\|_X^2 \dd s \,\le\, C \cM^4 \tau_0 \,e^{-2N} 
  \Bigl(1+\log\frac{|\alpha|}{\tau_0} + \log_+\frac{1}{\cM}\Bigr)\,.
\]
Similarly, for $k \in \{1,\dots,N\}$, we find
\begin{align*}
  \|J_k(\tau)\|_X^2 \,&\le\, C_0\,e^{-2\mu(\tau-(k+1)\tau_0)} \int_{(k-1)\tau_0}^{
  k\tau_0} \|N[\tilde w(s),\tilde v(s)]\|_X^2 \dd s \\
  \,&\le\, C\cM^4 \tau_0 \,e^{-2\mu(\tau-(k+1)\tau_0)} \,e^{-2k}
  \Bigl(1+\log\frac{|\alpha|}{\tau_0} + \log_+\frac{1}{\cM}\Bigr)\,.
\end{align*}
We know that $e^{-N} \le e^{2-\tau/\tau_0}$ by definition of $N$, and 
since $\mu \tau_0 \ge 3$ by assumption we also have
\[
  e^{-\mu(\tau-\tau_0)}\sum_{k=1}^N e^{k(\mu\tau_0-1)} \,\le\, 2 e^{-\mu(\tau-\tau_0)}
  \,e^{N(\mu\tau_0-1)} \,\le\, 2\,e^{-N} \,\le\, 2\,e^{2-\tau/\tau_0}\,.
\]
Therefore the estimates above imply that
\begin{equation}\label{IJest}
  \|I_0(\tau)\|_X \,+\, \sum_{k=1}^N \|J_k(\tau)\|_X \,\le\, C\cM \tau_0^{1/2} 
  \,e^{-\tau/\tau_0}\Bigl(1+\log\frac{|\alpha|}{\tau_0} + 
  \log_+\frac{1}{\cM}\Bigr)^{1/2}\,.  
\end{equation}
Combining \eqref{wperp1}, \eqref{intdecomp}, and \eqref{IJest}, 
we thus obtain
\begin{equation}\label{wperpest}
  \sup_{0 \le \tau \le T} e^{\tau/\tau_0} \|\tilde w_\perp(\tau)\|_X \,\le\, 
  e \|\tilde w_\perp(0)\|_X + C \bigl(\tau_0 \log|\alpha|\bigr)^{1/2} 
  \cM^2 + C\bigl(\tau_0 \log_+\cM^{-1}\bigr)^{1/2}\cM^2\,.
\end{equation}
Estimate \eqref{nonlinest} is now a direct consequence of \eqref{wrest} 
and \eqref{wperpest}. 
\end{proof}

\begin{rem}\label{cor:nonlin}
Using the definition of $\tau_0$ in \eqref{mutaudef}, we see that, 
if $|\alpha|$ is large enough, estimate \eqref{nonlinest} can 
be written in the alternative form
\begin{equation}\label{nonlinest2}
  \cM(T) \,\le\, C_4 \|\tilde w_0\|_X + \frac{C_7\cM(T)^2}{|\alpha|^{1/6}}
  \Bigl(\log|\alpha| + \log_+\frac{1}{\cM(T)}\Bigr)\,,
\end{equation}
for some universal constants $C_4 \ge 1$ and $C_7 > 0$. 
\end{rem}

\subsection{End of the proof of Theorem~\ref{thm:main}.}

Choose $C_1 > 0$ such that $8 C_1 C_4 C_7 < 1$, and fix $\alpha_0 \ge 2$ 
large enough so that, whenever $|\alpha| \ge \alpha_0$\:\\[1mm]
i) The quantities $\mu$ and $\tau_0$ defined in \eqref{mutaudef}
satisfy $\mu \ge 1$, $0 < \tau_0 \le 1$, and $\mu \tau_0 \ge 3$.  \\[1mm]
ii) Estimate \eqref{nonlinest2} in Remark~\ref{cor:nonlin} is valid. \\[1mm]
iii) The following inequalities hold\: $C_1 |\alpha|^{1/6} \le C_3 
|\alpha| \log|\alpha|$, and $4 C_7 \le |\alpha|^{1/6}$.  

\medskip\noindent
Given $\alpha \in \R$ with $|\alpha| \ge \alpha_0$, we consider initial 
perturbations $\tilde w_0 \in X_1$ such that
\begin{equation}\label{initbound}
   \|\tilde w_0\|_X \,\le\, \frac{C_1 |\alpha|^{1/6}}{\log|\alpha|}\,,
   \qquad \hbox{hence also} \quad \|\tilde w_0\|_X \,\le\, C_3 |\alpha|\,.  
\end{equation}
By continuity, the solution of \eqref{tildeweq} with initial data
$\tilde w_0$ satisfies, for $T > 0$ sufficiently small,  
\begin{equation}\label{Mcontract}
   \cM(T) \,\le\, 2C_4\|\tilde w_0\|_X \,<\, \frac{1}{4 C_7}
   \,\frac{|\alpha|^{1/6}}{\log|\alpha|}\,,
\end{equation}
where $\cM(T)$ is defined in \eqref{defnorm}. But as long as 
\eqref{Mcontract} holds, we have by construction
\[
  \frac{C_7\cM(T)^2}{|\alpha|^{1/6}}\log|\alpha| \,<\, \frac{\cM(T)}{4}\,, 
  \qquad \hbox{and}\quad \frac{C_7\cM(T)^2}{|\alpha|^{1/6}}
  \log_+\frac{1}{\cM(T)} \,\le\, \frac{C_7\cM(T)}{|\alpha|^{1/6}} 
  \,\le\, \frac{\cM(T)}{4}\,,
\]
and inequality \eqref{nonlinest2} then shows that $\cM(T) <  C_4 
\|\tilde w(0)\|_X + \cM(T)/2$, which in turn implies that $\cM(T) < 
2C_4\|\tilde w_0\|_X$. So we conclude that \eqref{Mcontract} holds 
for any $T > 0$, namely
\begin{equation}\label{Mbound}
  \sup_{\tau \ge 0} \Bigl(\|\tilde w_r(\tau)\|_X + e^{\tau/\tau_0} 
  \|\tilde w_\perp(\tau)\|_X\Bigr) \,\le\, 2 C_4 \|\tilde w_0\|_X\,.
\end{equation}
In view of the definition \eqref{mutaudef} of $\tau_0$, this proves 
in particular \eqref{wperpdecay}. To establish \eqref{wrdecay} 
for $\tau \ge 1$, we observe that the integral equation \eqref{wrpint} 
satisfied by the radially symmetric component $\tilde w_r$ can be written 
in the form
\begin{equation}\label{wrint2}
  \tilde w_r(\tau) \,=\, e^{\tau\cL}\tilde w_r(0) - \int_0^\tau 
  e^{-\frac12(\tau-s)}\div\Bigl(e^{(\tau-s)\cL} Z[\tilde w(s),\tilde v(s)]\Bigr)
  \dd s\,,
\end{equation}
see \eqref{wint} below. As $\tilde w_0 \in X_1$, we have 
$\|e^{\tau\cL}\tilde w_r(0)\|_X \le e^{-\tau}\|\tilde w_r(0)\|_X$ 
for all $\tau \ge 0$. Moreover, each component of the velocity 
field $Z[\tilde w,\tilde v]$ belongs to the subspace $X_0$, hence 
using the semigroup estimates in \cite[Appendix~A]{GW1} and 
proceeding exactly as in the proof of \eqref{bilin} below, we 
obtain the bound
\begin{equation}\label{wrintest}
  \Bigl\|\div\Bigl(e^{(\tau-s)\cL} Z[\tilde w(s),\tilde v(s)]\Bigr)
  \Bigr\|_X \,\le\, C\,\frac{e^{-\frac12(\tau-s)}}{a(\tau-s)^{3/4}}
  \,\|\tilde w_\perp(s)\|_X^2\,,
\end{equation}
for $0 < s < \tau$, where $a(\tau) = 1-e^{-\tau}$. Estimating the
right-hand side of \eqref{wrint2} for $\tau \ge 1$ with the help 
of \eqref{wrintest} and \eqref{Mbound}, we easily find
\begin{align*}
  \|\tilde w_r(\tau)\|_X \,&\le\, C e^{-\tau}\|\tilde w_0\|_X 
  + C \|\tilde w_0\|_X^2 \int_0^\tau \frac{e^{-(\tau-s)}}{a(\tau-s)^{3/4}}
  \,e^{-2s/\tau_0}\dd s \\
  \,&\le\, C e^{-\tau}\Bigl(\|\tilde w_0\|_X + \tau_0 
  \|\tilde w_0\|_X^2\Bigr) \,\le\, C e^{-\tau} \|\tilde w_0\|_X\,,
\end{align*}
where in the last inequality we used the fact that $\tau_0 
\|\tilde w_0\|_X$ is uniformly bounded if $\tau_0$ is defined by 
\eqref{mutaudef} and $\tilde w_0$ satisfies \eqref{initbound}. 
Together with \eqref{Mbound}, this concludes the proof of 
\eqref{wrdecay}. \QED

\section{Appendix}
\label{sec5}

In this final section, we collect for easy reference a few basic results 
concerning the rescaled vorticity equation \eqref{weq} and the 
two-dimensional Biot-Savart law \eqref{BS}. In particular 
we give a short proof of Propositions~\ref{prop:globex} and
\ref{prop:locstab}. 

\subsection{Well-posedness and a priori estimates}\label{subsec5.1}

We first recall that the Cauchy problem for the rescaled vorticity 
equation \eqref{weq} is globally well-posed in the weighted space 
$X = L^2(\R^2,G^{-1}\dd\xi)$ equipped with the scalar product 
\eqref{Xscalar}. Indeed, the integral equation associated with 
\eqref{weq} reads
\begin{equation}\label{wint}
 \begin{aligned}
  w(\tau) \,&=\, e^{\tau\cL} w_0 - \int_0^\tau e^{(\tau-s)\cL} \div\bigl(v(s)
  w(s)\bigr)\dd s \\
  \,&=\, e^{\tau\cL} w_0 - \int_0^\tau e^{-\frac12(\tau-s)} \div\bigl(e^{(\tau-s)\cL} 
  v(s)w(s)\bigr)\dd s\,, \qquad \tau \ge 0\,,
  \end{aligned}
\end{equation}
where in the second inequality we used the identity $\partial_i\,e^{\tau\cL} 
= e^{\tau(\cL+\frac12)}\partial_i$ for $i = 1,2$, which itself follows from its
infinitesimal version $\partial_i\cL = (\cL + \frac12) \partial_i$. In 
\eqref{wint} it is understood that the velocity $v(s)$ is reconstructed 
from the vorticity $w(s)$ via the two-dimensional Biot-Savart law 
\eqref{BS}. 

There is an explicit expression for the semigroup $e^{\tau\cL}$ generated by 
$\cL$, which can be found for instance in \cite[Appendix~A]{GW1}\:
\begin{equation}\label{mehler1}
  \bigl(e^{\tau\cL}w\bigr)(\xi) \,=\, \frac{1}{4\pi a(\tau)}
  \int_{\R^2} e^{-\frac{1}{4 a(\tau)}|\xi-\eta e^{-\tau/2}|^2}\,w(\eta)\dd \eta\,, 
  \qquad \xi \in \R^2\,,~\tau > 0\,,
\end{equation}
where $a(\tau) = 1-e^{-\tau}$. Since we work in the space $X$ with 
Gaussian weight $G^{-1/2}(\xi) = \sqrt{4\pi}\,e^{|\xi|^2/8}$, it is more 
convenient here to use Mehler's formula
\begin{equation}\label{mehler2}
 \begin{aligned}
  \bigl(e^{\tau L} f\bigr)(\xi) \,&=\, \frac{1}{4\pi a(\tau)}
  \int_{\R^2} e^{|\xi|^2/8}\,e^{-\frac{1}{4 a(\tau)}|\xi-\eta e^{-\tau/2}|^2}
  \,e^{-|\eta|^2/8}\,f(\eta)\dd \eta\,, \\ 
  \,&=\, \frac{1}{4\pi a(\tau)}\int_{\R^2} e^{-\frac{1}{8 a(\tau)}\bigl(
  |\xi-\eta e^{-\tau/2}|^2 + |\eta-\xi e^{-\tau/2}|^2\bigr)} \,f(\eta)\dd \eta\,, 
 \end{aligned}
\end{equation}
which defines the semigroup generated by the operator $L = G^{-1/2}\cL 
\,G^{1/2}$, see \eqref{Lconj}. A direct calculation based on \eqref{mehler2} 
gives the $L^p$--$L^q$ estimates $\|e^{\tau L} f\|_{L^q(\R^2)} \le C 
a(\tau)^{\frac1q-\frac1p}\|f\|_{L^p(\R^2)}$ for $1 \le p \le q \le \infty$. 
Returning to the original operator $\cL$, we conclude that
\begin{equation}\label{LpLq}
   \|G^{-1/2}\,e^{\tau \cL} w\|_{L^q(\R^2)} \,\le\, \frac{C}{a(\tau)^{\frac1p-\frac1q}}\,
  \|G^{-1/2}\,w\|_{L^p(\R^2)}\,, \qquad \tau > 0\,, 
\end{equation}
where  $a(\tau) = 1-e^{-\tau}$. Similarly, we have the corresponding estimate 
for the first derivatives\:
\begin{equation}\label{LpLqder}
   \|G^{-1/2}\nabla\,e^{\tau \cL} w\|_{L^q(\R^2)} \,\le\, \frac{C}{a(\tau)^{\frac1p
   -\frac1q + \frac12}}\,\|G^{-1/2}\,w\|_{L^p(\R^2)}\,, \qquad \tau > 0\,,
\end{equation}
see \cite[Proposition~2.1]{GM1}. 

To establish local well-posedness for Eq.~\eqref{weq} in the space $X$ 
it is sufficient to prove that the bilinear operator
\[
  B[w_1,w_2](\tau) \,=\, \int_0^\tau e^{-\frac12(\tau-s)} \div\bigl(e^{(\tau-s)\cL} 
  v_1(s)w_2(s)\bigr)\dd s\,, \quad \hbox{where}~
  v_1 = K_{BS}*w_1\,,
\]
is continuous in the space $C^0([0,T],X)$ for any $T > 0$, and has 
a small norm if $T \ll 1$. Proceeding as in \cite[Lemma~3.1]{GW1}, 
we deduce from \eqref{LpLqder} with $q = 2$ and $p = 4/3$ that
\[
   \|B[w_1,w_2](\tau)\|_X \,\le\, \int_0^\tau \frac{C}{a(\tau-s)^{3/4}}\,
   \|G^{-1/2} v_1(s) w_2(s)\|_{L^{4/3}}\dd s\,.
\]
Next, using H\"older's inequality and Lemma~\ref{lem:BS1} below
(with $p = 4/3$, $q = 4$), we obtain
\begin{equation}\label{Gv1w2}
  \|G^{-1/2} v_1 w_2\|_{L^{4/3}} \,\le\, \|v_1\|_{L^4} \|G^{-1/2} w_2\|_{L^2}
  \,\le\, C \|w_1\|_{L^{4/3}} \|w_2\|_X \,\le\, C \|w_1\|_X  \|w_2\|_X\,.
\end{equation}
Thus we conclude that
\begin{equation}\label{bilin}
  \sup_{0 \le \tau \le T} \|B[w_1,w_2](\tau)\|_X \,\le\, \Bigl(\int_0^T 
  \frac{C}{a(s)^{3/4}} \dd s\Bigr)\,\Bigl(\,\sup_{0 \le s \le T}
  \|w_1(s)\|_X\Bigr)\,\Bigl(\,\sup_{0 \le s \le T} \|w_2(s)\|_X\Bigr)\,,
\end{equation}
and this bilinear estimate implies local well-posedness in $X$ by a standard 
fixed point argument. 

To prove that all solutions of \eqref{weq} in $X$ are global we 
need to show that the norm $\|w(\tau)\|_X$ cannot blow-up in finite
time. Sharp a priori estimates on the solutions of the original 
vorticity equation \eqref{omeq} have been obtained by Carlen and 
Loss in \cite{CL}, and can be translated into useful bounds for
the rescaled equation \eqref{weq}. In particular, it follows 
from \cite[Theorem~3]{CL} that any solution of \eqref{weq} with 
initial data $w_0 \in L^1(\R^2)$ satisfies the pointwise estimate
\begin{equation}\label{CL1}
  |w(\xi,\tau)| \,\le\, \frac{C_\beta(R)}{4\pi a(\tau)}
  \int_{\R^2} e^{-\frac{\beta}{4 a(\tau)}|\xi-\eta e^{-\tau/2}|^2}\,
  |w_0(\eta)| \dd\eta\,, \qquad \xi \in \R^2\,,~\tau > 0\,,
\end{equation}
for any $\beta \in (0,1)$, where $R = \|w_0\|_{L^1(\R^2)}$ and 
$C_\beta(R) = \exp(\frac{\beta}{1-\beta} \frac{R^2}{2\pi^2})$, 
see also \cite[Section~2]{GR}. If we assume that $w_0 \in X$ 
and $\beta > 1/2$, a direct calculation based on \eqref{CL1} 
shows that
\begin{equation}\label{CL2}
  \int_{\R^2} e^{|\xi|^2/4} |w(\xi,\tau)|^2\dd \xi \,\le\, 
  \frac{4C_\beta^2}{(2\beta {-} 1 {+} e^{-\tau})(1 {+} (2\beta {-} 
  1)e^{-\tau})}\int_{\R^2} e^{|\eta|^2/4} |w_0(\eta)|^2\dd \eta\,,
\end{equation}
for all $\tau > 0$. In particular, we have $\|w(\tau)\|_X \le 
2 C_\beta (2\beta{-}1)^{-1/2} \|w_0\|_X$ for all $\tau \ge 0$, and 
this implies that all solutions of \eqref{weq} in $X$ are 
global and uniformly bounded for positive times. 

Finally, we prove that all solutions of \eqref{weq} in $X$ 
converge to $\alpha G$ as $\tau \to \infty$, where
\[
  \alpha \,=\, \int_{\R^2} w(\xi,\tau)\dd \xi \,=\, 
  \int_{\R^2} w_0(\xi)\dd \xi\,.
\]
Indeed, we decompose $w(\tau) = \alpha G + \tilde w(\tau)$ for 
all $\tau \ge 0$, and we consider the equation \eqref{tildeweq} 
satisfied by the perturbation $\tilde w \in X_0$. Using 
the skew-symmetry of the operator $\Lambda$ and integrating 
by parts, we easily obtain the energy estimate
\begin{equation}\label{wtildest1}
\begin{aligned}
  \frac12\frac{\D}{\D \tau} \|\tilde w(\tau)\|_X^2 \,&=\, 
  \langle \tilde w(\tau), \cL \tilde w(\tau) \rangle_X - \int_{\R^2} 
  G^{-1} \tilde w(\xi,\tau) \tilde v(\xi,\tau)\cdot\nabla\tilde 
  w(\xi,\tau)\dd\xi \\ \,&=\, 
  \langle \tilde w(\tau), \cL \tilde w(\tau) \rangle_X +\frac14 \int_{\R^2} 
  G^{-1} (\xi\cdot \tilde v(\xi,\tau)) \tilde w(\xi,\tau)^2 \dd\xi\,.
\end{aligned}
\end{equation}
The first term in the right-hand side is estimated using the 
following lemma. 

\begin{lem}\label{lem:E}
Given $w \in D(\cL) \subset X$, define $E[w] = - \langle w, 
\cL w\rangle_X \ge 0$.\\[1mm]
i) If $w \in X_0$, then $E[w] \ge \frac12 \|w\|_X^2$ and
\begin{equation}\label{E1}
  E[w] \,\ge\, \frac16 \|\nabla w\|_X^2 + \frac{1}{96} \|\xi w\|_X^2 + 
  \frac{1}{12} \|w\|_X^2\,.
\end{equation}
ii) If $w \in X_1$, then $E[w] \ge \|w\|_X^2$ and
\begin{equation}\label{E2}
  E[w] \,\ge\, \frac14 \|\nabla w\|_X^2 + \frac{1}{64} \|\xi w\|_X^2 + 
  \frac18 \|w\|_X^2\,.
\end{equation}
\end{lem}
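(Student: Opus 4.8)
The plan is to reduce the lemma to two quadratic-form identities connecting $E[w] = -\langle w,\cL w\rangle_X$ to the weighted quantities $\|\nabla w\|_X$ and $\|\xi w\|_X$, and then to combine them with the spectral gap of $\cL$ on $X_0$, respectively $X_1$, recalled in Section~\ref{subsec2.1}. All the integrations by parts below are legitimate because every $w \in D(\cL)$ satisfies $(1+|\xi|)^2 w,\ (1+|\xi|)\nabla w,\ \Delta w \in X$, so no boundary term survives; this is the only mildly delicate point and it is routine. First I would establish the identity
\[
  E[w] \,=\, \|\nabla w\|_X^2 - \|w\|_X^2\,, \qquad w \in D(\cL)\,,
\]
by integrating $-\langle w,\Delta w\rangle_X$ by parts and using $\nabla G^{-1} = \tfrac12\,\xi\,G^{-1}$: the cross term $\tfrac12\int_{\R^2} G^{-1} w\,\xi\cdot\nabla w\dd\xi$ that appears cancels exactly against the contribution of the drift term $\tfrac12\,\xi\cdot\nabla w$ in $\cL w$, leaving only $\|\nabla w\|_X^2 - \|w\|_X^2$.

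Next, using the conjugation $L = G^{-1/2}\cL\,G^{1/2} = \Delta - |\xi|^2/16 + 1/2$ of \eqref{Lconj} and setting $f = G^{-1/2}w$, one has $E[w] = \langle f,(-L)f\rangle_{L^2} = \|\nabla f\|_{L^2}^2 + \tfrac{1}{16}\|\,|\xi|f\|_{L^2}^2 - \tfrac12\|f\|_{L^2}^2$. Since $\|\,|\xi|f\|_{L^2} = \|\xi w\|_X$ and $\|f\|_{L^2} = \|w\|_X$, discarding the nonnegative term $\|\nabla f\|_{L^2}^2$ yields
\[
  E[w] \,\ge\, \tfrac{1}{16}\,\|\xi w\|_X^2 - \tfrac12\,\|w\|_X^2\,, \qquad w \in D(\cL)\,.
\]
(Alternatively this can be obtained directly from $0 \le \|\nabla f\|_{L^2}^2 = \int_{\R^2} G^{-1}|\nabla w + \tfrac14\xi w|^2\dd\xi$ combined with the first identity.)

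I would then invoke the spectral gap. The operator $-L = -\Delta + |\xi|^2/16 - 1/2$ has purely discrete spectrum $\{n/2 : n \ge 0\}$ by \eqref{cLspec}, with ground state proportional to $G^{1/2}$ and first excited space spanned by $\xi_1 G^{1/2},\xi_2 G^{1/2}$. As noted in Section~\ref{subsec2.1}, the condition $w \in X_0$ is equivalent to $\langle f, G^{1/2}\rangle_{L^2} = 0$, and $w \in X_1$ to $f$ being in addition orthogonal in $L^2$ to $\xi_1 G^{1/2}$ and $\xi_2 G^{1/2}$; hence the min--max principle gives $E[w] \ge \tfrac12\|w\|_X^2$ when $w \in X_0$ and $E[w] \ge \|w\|_X^2$ when $w \in X_1$. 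This proves the first assertion in each of (i) and (ii).

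Finally, the refined bounds \eqref{E1} and \eqref{E2} follow by elementary bookkeeping. From the second identity and the spectral gap, $\|\xi w\|_X^2 \le 16\,E[w] + 8\,\|w\|_X^2$, hence $\|\xi w\|_X^2 \le 32\,E[w]$ on $X_0$ and $\|\xi w\|_X^2 \le 24\,E[w]$ on $X_1$. Substituting $\|\nabla w\|_X^2 = E[w] + \|w\|_X^2$ together with these bounds into the right-hand side of \eqref{E1} gives, on $X_0$,
\[
  \tfrac16\|\nabla w\|_X^2 + \tfrac1{96}\|\xi w\|_X^2 + \tfrac1{12}\|w\|_X^2 \,=\, \tfrac16 E[w] + \tfrac14\|w\|_X^2 + \tfrac1{96}\|\xi w\|_X^2 \,\le\, \bigl(\tfrac16 + \tfrac12 + \tfrac13\bigr)E[w] \,=\, E[w]\,,
\]
and likewise, on $X_1$,
\[
  \tfrac14\|\nabla w\|_X^2 + \tfrac1{64}\|\xi w\|_X^2 + \tfrac18\|w\|_X^2 \,=\, \tfrac14 E[w] + \tfrac38\|w\|_X^2 + \tfrac1{64}\|\xi w\|_X^2 \,\le\, \bigl(\tfrac14 + \tfrac38 + \tfrac38\bigr)E[w] \,=\, E[w]\,.
\]
The numerical coefficients in the lemma are precisely the ones that make these two sums equal to $E[w]$, so nothing about them is optimal and any slightly smaller constants would work; the argument contains no genuine obstacle beyond the (routine) verification of the two integration-by-parts identities above.
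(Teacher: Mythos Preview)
Your proof is correct and follows essentially the same route as the paper: both establish the identity $E[w] = \|\nabla w\|_X^2 - \|w\|_X^2$, the inequality $E[w] \ge \tfrac{1}{16}\|\xi w\|_X^2 - \tfrac12\|w\|_X^2$ (via the conjugation to the harmonic oscillator), and the spectral gap $E[w] \ge \tfrac12\|w\|_X^2$ on $X_0$ (resp.\ $E[w] \ge \|w\|_X^2$ on $X_1$), and then combine these three ingredients. The only difference is cosmetic: the paper takes a convex combination of the three relations with weights $\tfrac16,\tfrac16,\tfrac23$ (resp.\ $\tfrac14,\tfrac14,\tfrac12$), whereas you first extract $\|\xi w\|_X^2 \le 32E[w]$ (resp.\ $24E[w]$) and then substitute, which amounts to the same algebra.
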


\begin{proof}
Using definition \eqref{cLdef} and integrating by parts, we obtain 
\begin{equation}\label{E3}
  E[w] \,=\, -\int_{\R^2} G^{-1} w (\cL w)\dd \xi 
  \,=\, \int_{\R^2} G^{-1} |\nabla w|^2 \dd \xi - \int_{\R^2} G^{-1} 
  |w|^2 \dd \xi\,. 
\end{equation}
Moreover it is easy to verify that
\[
  \int_{\R^2} G^{-1} |\nabla w|^2 \dd \xi \,=\, \int_{\R^2} |\nabla 
  (G^{1/2} w)|^2 \dd \xi + \frac{1}{16}\int_{\R^2} G^{-1} |\xi|^2 |w|^2
  \dd \xi + \frac{1}{2}\int_{\R^2} G^{-1} |w|^2 \dd \xi\,, 
\]
hence
\begin{equation}\label{E4}
  E[w] \,\ge\, \frac{1}{16}\int_{\R^2} G^{-1} |\xi|^2 |w|^2
  \dd \xi - \frac{1}{2}\int_{\R^2} G^{-1} |w|^2 \dd \xi\,.
\end{equation}
Finally, the dissipative properties of $\cL$ recalled in 
Section~\ref{subsec2.1} imply that $E[w] \ge 0$, and 
\begin{equation}\label{E5}
  E[w] \,\ge\, \frac12 \int_{\R^2} G^{-1} |w|^2 \dd \xi
  \quad\hbox{if } w \in X_0\,, \qquad 
  E[w] \,\ge\, \int_{\R^2} G^{-1} |w|^2 \dd \xi
  \quad\hbox{if } w \in X_1\,.
\end{equation}
Taking a convex combination of \eqref{E3}, \eqref{E4}, \eqref{E5}
with coefficients $1/6$, $1/6$, $2/3$ if $w \in X_0$, and 
with coefficients $1/4$, $1/4$, $1/2$ if $w \in X_1$, we 
obtain estimates \eqref{E1}, \eqref{E2}, respectively. 
\end{proof}

We now return to the analysis of \eqref{wtildest1}. Using 
H\"older's inequality and Lemma~\ref{lem:E}, the cubic term
can be estimated as follows\:
\begin{equation}\label{wtildest2}
  \Bigl|\int_{\R^2} G^{-1} (\xi\cdot \tilde v) \tilde w^2\dd\xi \Bigr| 
  \,\le\, \|G^{-1/2} |\xi| \tilde w\|_{L^2} \|G^{-1/2}\tilde w\|_{L^4} 
   \|\tilde v\|_{L^4} \,\le\, C E|\tilde w]\,\|\tilde v\|_{L^4}\,,
\end{equation}
because $\|G^{-1/2} |\xi| \tilde w\|_{L^2} = \||\xi| \tilde w\|_X 
\le C E[\tilde w]^{1/2}$ and
\[
  \|G^{-1/2}\tilde w\|_{L^4} \,\le\, C \|G^{-1/2}\tilde w\|_{L^2}^{1/2}\,
  \|\nabla (G^{-1/2}\tilde w)\|_{L^2}^{1/2} \,\le\, C E[\tilde w]^{1/2}\,.
\]
As $\tilde w \in X_0$, we obtain from \eqref{wtildest1}, \eqref{wtildest2} 
and Lemma~\ref{lem:E}\:
\begin{equation}\label{wtildest3}
  \frac{\D}{\D \tau} \|\tilde w(\tau)\|_X^2 \,\le\,  
  - 2 E[\tilde w(\tau)] \bigl(1 - C_8 \|\tilde v(\tau)\|_{L^4}\bigr)\,,
\end{equation}
for some positive constant $C_8 > 0$. Now, since our Gaussian space $X$
is included in the polynomially weighted spaces $L^2(m)$ considered in 
\cite{GW2}, we know from Lemma~\ref{lem:BS1} and 
\cite[Proposition~1.5]{GW2} that $\|\tilde v(\tau)\|_{L^4} \le C 
\|\tilde w(\tau)\|_{L^{4/3}} \to 0$ as $\tau \to +\infty$. In particular, 
we have $C_8 \|\tilde v(\tau)\|_{L^4} \le 1/2$ for large times, in which 
case we deduce from \eqref{wtildest3} and Lemma~\ref{lem:E} that
\[
  \frac{\D}{\D \tau} \|\tilde w(\tau)\|_X^2 \,\le\,  
  - \|\tilde w(\tau)\|_X^2 \bigl(1 - C_8 \|\tilde v(\tau)
  \|_{L^4}\bigr) \,\le\, -\frac12 \|\tilde w(\tau)\|_X^2\,. 
\]
This differential inequality implies that $\|\tilde w(\tau)\|_X = 
\|w(\tau) - \alpha G\|_X \to 0$ as $\tau \to \infty$, which concludes 
the proof of Proposition~\ref{prop:globex}. 

On the other hand, if we restrict ourselves to solutions in a 
neighborhood of $\alpha G$, we can assume from the beginning that
$C_8\|\tilde v(\tau)\|_{L^4} \le C_9\|\tilde w(\tau)\|_X \le 1/2$, in 
which case \eqref{wtildest3} implies
\[
  \frac{\D}{\D \tau} \|\tilde w(\tau)\|_X^2 \,\le\,  - 2 E[\tilde w(\tau)]  
  \bigl(1 - C_9 \|\tilde w(\tau)\|_X\bigr) \,\le\,  - \|\tilde w(\tau)\|_X^2 
  \bigl(1 - C_9 \|\tilde w(\tau)\|_X\bigr)\,,
\]
hence
\[
  \frac{\|\tilde w(\tau)\|_X}{1 - C_9 \|\tilde w(\tau)\|_X} \,\le\, 
  \frac{\|\tilde w(0)\|_X}{1 - C_9 \|\tilde w(0)\|_X}\,e^{-\tau/2}\,,
  \qquad \tau \ge 0\,. 
\]
We easily deduce that $\|\tilde w(\tau)\|_X \,\le\, \min(1,2\,e^{-\tau/2}) 
\|\tilde w(0)\|_X$ for all $\tau \ge 0$, and this concludes the 
proof of Proposition~\ref{prop:locstab}.

\subsection{The two-dimensional Biot-Savart law}\label{subsec5.2}

Let $u = K_{BS}*\omega$ be the velocity field associated with the 
vorticity distribution $\omega$ via the Biot-Savart law \eqref{BS}. 
We first recall the following classical estimates.  

\begin{lem}\label{lem:BS1}\quad\\[1mm]
1) Assume that $1 < p < 2 < q < \infty$ and $\frac{1}{q} = \frac{1}{p} - 
\frac{1}{2}$. If $\omega \in L^p(\R^2)$, then $u \in L^q(\R^2)$ and
\begin{equation}\label{BS1}
  \|u\|_{L^q(\R^2)} \,\le\, C_q \|\omega\|_{L^p(\R^2)}~.
\end{equation}
2) Assume that $1 \le p < 2 < q \le \infty$. If $\omega \in L^p(\R^2) 
\cap L^q(\R^2)$, then $u \in L^\infty(\R^2)$ and
\begin{equation}\label{BS2}
  \|u\|_{L^\infty(\R^2)} \,\le\, C_{p,q} \|\omega\|_{L^p(\R^2)}^\theta
  \|\omega\|_{L^q(\R^2)}^{1-\theta}\,,
\end{equation}
where $\theta \in (0,1)$ satisfies $\frac{\theta}{p} + \frac{1-\theta}{q} 
= \frac12$. 
\end{lem}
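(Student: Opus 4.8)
The plan is to handle the two assertions by separate classical arguments, using only the pointwise bound $|K_{BS}(x)| = \frac{1}{2\pi}|x|^{-1}$ on $\R^2$.

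For the first assertion, I would start from $|u(x)| \le \frac{1}{2\pi}\int_{\R^2}|x-y|^{-1}|\omega(y)|\dd y$, which is, up to the constant $\frac{1}{2\pi}$, the Riesz potential of order $1$ in dimension $d = 2$ applied to $|\omega|$. The hypothesis $\frac1q = \frac1p - \frac12$ is exactly the scaling relation $\frac1q = \frac1p - \frac1d$ with $d = 2$, so, since $1 < p < q < \infty$, the Hardy--Littlewood--Sobolev inequality gives $\big\||x|^{-1} * |\omega|\big\|_{L^q(\R^2)} \le C_q \|\omega\|_{L^p(\R^2)}$, which is \eqref{BS1}.

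For the second assertion, I would use the standard splitting of the singular kernel at a free radius $R > 0$. For fixed $x \in \R^2$, split $u(x) = u_{<R}(x) + u_{>R}(x)$ according to whether $|x-y| \le R$ or $|x-y| > R$ in the Biot--Savart integral. H\"older's inequality with exponent $q$ gives
\[
  |u_{<R}(x)| \,\le\, \frac{1}{2\pi}\,\|\omega\|_{L^q}\Big(\int_{|z|\le R}|z|^{-q'}\dd z\Big)^{1/q'} \,=\, C_q\,\|\omega\|_{L^q}\,R^{1-2/q}\,,
\]
where the integral converges because $q > 2$ forces $q' < 2$ (when $q = \infty$ one uses directly $|u_{<R}(x)| \le \frac{1}{2\pi}\|\omega\|_{L^\infty}\int_{|z|\le R}|z|^{-1}\dd z = \|\omega\|_{L^\infty}R$). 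Likewise, H\"older's inequality with exponent $p$ gives $|u_{>R}(x)| \le C_p \|\omega\|_{L^p}\,R^{1-2/p}$, the relevant integral converging because $p < 2$ forces $p' > 2$ (the case $p = 1$ being treated directly via $\sup_{|z|>R}|z|^{-1} = R^{-1}$). Adding the two bounds, we obtain for every $R > 0$
\[
  |u(x)| \,\le\, C_q\,\|\omega\|_{L^q}\,R^{1-2/q} + C_p\,\|\omega\|_{L^p}\,R^{1-2/p}\,.
\]
Since $p < q$ we have $2/p - 2/q > 0$, so we may choose $R$ so that the two terms on the right are comparable, namely $R^{2/p-2/q} \sim \|\omega\|_{L^p}/\|\omega\|_{L^q}$. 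This choice yields $|u(x)| \le C_{p,q}\|\omega\|_{L^p}^\theta \|\omega\|_{L^q}^{1-\theta}$ with $\theta = \frac{1-2/q}{2/p-2/q}$, and a one-line computation confirms that this $\theta$ is precisely the exponent satisfying $\frac\theta p + \frac{1-\theta}q = \frac12$. Taking the supremum over $x \in \R^2$ gives \eqref{BS2}.

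There is no genuine obstacle here, since both parts are textbook facts: the first is a direct citation of the Hardy--Littlewood--Sobolev inequality, and the second is the classical kernel-splitting plus optimization argument. The only mildly delicate points are the bookkeeping of conjugate exponents in the choice of $R$ and the verification that the endpoint cases $p = 1$ and $q = \infty$ cause no difficulty, both of which are routine; note in particular that all constants obtained in this way depend only on $p$ and $q$, not on $x$ or on $\omega$, as required in the statement.
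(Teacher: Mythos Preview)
Your proof is correct and follows essentially the same approach as the paper: part 1) is a direct invocation of Hardy--Littlewood--Sobolev, and part 2) is the kernel-splitting at radius $R$ followed by H\"older and optimization over $R$, which is exactly how the paper derives inequality \eqref{BSfirst} (to which it refers for the proof of \eqref{BS2}).
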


\begin{proof}
The bound \eqref{BS1} is a direct consequence of the Hardy-Littlewood-Sobolev
inequality, see e.g. \cite{LL}. The optimal constant $C_q$ in 
\eqref{BS1} is not explicitly known, but one can show that $C_q = 
\cO(q^{1/2})$ as $q \to +\infty$. Estimate \eqref{BS2} is a 
consequence of inequality \eqref{BSfirst} below, where the constant
$C_{p,q}$ is explicitly given. 
\end{proof}

Next we establish a new estimate, somewhat in the spirit of the celebrated 
Br\'ezis-Gallou\"et inequality \cite{BG}, which strengthens \eqref{BS2}
and shows that the $L^\infty$ norm of the velocity field can be controlled 
by the $L^2$ norm of the vorticity up to logarithmic correction. 

\begin{lem}\label{lem:BS2} 
Assume that $1 \le p < 2 < q \le \infty$. If $\omega \in L^p(\R^2) 
\cap L^q(\R^2)$, then $u \in L^\infty(\R^2)$ and
\begin{equation}\label{BS3}
  \|u\|_{L^\infty} \,\le\, C \|\omega\|_{L^2} \Bigl(1 + \log\frac{
  \|\omega\|_{L^p}^\theta\|\omega\|_{L^q}^{1-\theta}}{
  \|\omega\|_{L^2}}\Bigl)^{1/2}\,,
\end{equation}
where $C$ depends only on $p,q$, and $\theta \in (0,1)$ satisfies 
$\frac{\theta}{p} + \frac{1-\theta}{q} = \frac12$. 
\end{lem}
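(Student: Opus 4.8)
The approach is to adapt the classical Br\'ezis-Gallou\"et argument \cite{BG}: split the Biot-Savart kernel into three radial zones, calibrated so that the inner and outer zones each contribute at the level of $\|\omega\|_{L^2}$, while the logarithmic factor is produced entirely by the annular middle zone. First, we may assume $\omega \not\equiv 0$ (otherwise the estimate is trivial), so that the three norms in \eqref{BS3} are positive; moreover, by log-convexity of the $L^r$ norms and the relation $\frac{\theta}{p} + \frac{1-\theta}{q} = \frac12$, we have the interpolation bound $\|\omega\|_{L^2} \le \|\omega\|_{L^p}^\theta \|\omega\|_{L^q}^{1-\theta}$, so the argument of the logarithm in \eqref{BS3} is at least $1$ and the right-hand side is well defined.

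Fix radii $0 < \delta \le R$ and decompose $K_{BS} = K_{BS}\1_{B_\delta} + K_{BS}\1_{B_R \setminus B_\delta} + K_{BS}\1_{B_R^c}$, which gives $u = u_1 + u_2 + u_3$. Since $|K_{BS}(x)| = (2\pi|x|)^{-1}$, elementary computations in polar coordinates give $\|K_{BS}\1_{B_\rho}\|_{L^{q'}} = c_q\,\rho^{1-2/q}$ for $q > 2$ and $\|K_{BS}\1_{B_\rho^c}\|_{L^{p'}} = c_p\,\rho^{1-2/p}$ for $p < 2$, with $c_p,c_q$ finite and explicit; here $p',q'$ denote conjugate exponents, with the obvious reading when $p=1$ or $q=\infty$. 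H\"older's inequality applied to the convolution then yields, for every $\rho > 0$,
\begin{equation}\label{BSfirst}
  \|u\|_{L^\infty} \,\le\, c_q\,\rho^{1-2/q}\|\omega\|_{L^q} + c_p\,\rho^{1-2/p}\|\omega\|_{L^p}\,,
\end{equation}
and choosing $\rho$ so as to balance the two terms already proves \eqref{BS2} with an explicit constant. For \eqref{BS3} we keep $\delta < R$: the same bounds give $\|u_1\|_{L^\infty} \le c_q\,\delta^{1-2/q}\|\omega\|_{L^q}$ and $\|u_3\|_{L^\infty} \le c_p\,R^{1-2/p}\|\omega\|_{L^p}$, while the Cauchy-Schwarz inequality together with the identity $\|K_{BS}\1_{B_R\setminus B_\delta}\|_{L^2}^2 = \frac{1}{2\pi}\log(R/\delta)$ gives $\|u_2\|_{L^\infty} \le (2\pi)^{-1/2}\bigl(\log(R/\delta)\bigr)^{1/2}\|\omega\|_{L^2}$.

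It remains to choose the two radii. I would take $\delta, R > 0$ to be the unique solutions of $\delta^{1-2/q} = \|\omega\|_{L^2}/\|\omega\|_{L^q}$ and $R^{1-2/p} = \|\omega\|_{L^2}/\|\omega\|_{L^p}$, which exist because $1 - 2/q > 0$ and $1 - 2/p < 0$. With this choice $\|u_1\|_{L^\infty}$ and $\|u_3\|_{L^\infty}$ are each bounded by a constant times $\|\omega\|_{L^2}$. Setting $s = \frac{2(q-p)}{(2-p)(q-2)} > 0$ and noting that $\frac{p}{2-p} = \theta s$ and $\frac{q}{q-2} = (1-\theta)s$, one computes $\log(R/\delta) = \frac{p}{2-p}\log\frac{\|\omega\|_{L^p}}{\|\omega\|_{L^2}} + \frac{q}{q-2}\log\frac{\|\omega\|_{L^q}}{\|\omega\|_{L^2}} = s\,\log\frac{\|\omega\|_{L^p}^\theta\|\omega\|_{L^q}^{1-\theta}}{\|\omega\|_{L^2}}$; the very same identity shows that $R \ge \delta$ is equivalent to the interpolation inequality recorded above, so the choice $\delta \le R$ is legitimate. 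Summing the three contributions and using $1 + \sqrt t \le \sqrt2\,(1+t)^{1/2}$ for $t \ge 0$ to absorb the constants then delivers \eqref{BS3}.

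The argument is entirely soft; the one point that deserves a line of care is the exponent bookkeeping in the last paragraph, which must simultaneously guarantee $\delta \le R$ (via the interpolation bound) and turn $\log(R/\delta)$ into an honest constant multiple of the logarithm appearing in the statement. A minor, routine adjustment is needed for the endpoint cases $p = 1$ and $q = \infty$, where the relevant $L^{p'}$ or $L^{q'}$ kernel norm is replaced by an $L^\infty$ or $L^1$ norm, respectively.
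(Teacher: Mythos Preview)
Your argument is correct, and it is genuinely different from the paper's. You use a \emph{three-zone} decomposition of the Biot-Savart kernel, pairing the inner and outer shells with $\|\omega\|_{L^q}$ and $\|\omega\|_{L^p}$ and reserving the annulus for Cauchy--Schwarz against $\|\omega\|_{L^2}$; the logarithm then appears directly as the $L^2$ norm of $K_{BS}$ on the annulus. The paper instead keeps only a \emph{two-zone} split, optimizes over the single radius to obtain the interpolation bound \eqref{BS2}, then \emph{re-enters} that bound at nearby exponents $p=2-\epsilon$, $q=2+\epsilon$, interpolates $\|\omega\|_{L^{2\pm\epsilon}}$ back to $\|\omega\|_{L^p},\|\omega\|_{L^2},\|\omega\|_{L^q}$, and finally optimizes over $\epsilon$, which is where the $\epsilon^{-1/2}$ prefactor turns into $(\log A)^{1/2}$.

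Your route is the more direct and more transparent one: the logarithm is produced in a single stroke, and the exponent bookkeeping you carry out (the identities $\tfrac{p}{2-p}=\theta s$, $\tfrac{q}{q-2}=(1-\theta)s$, which simultaneously certify $\delta\le R$ via interpolation) is clean. What the paper's detour buys is an explicit one-parameter family of intermediate estimates of the form $\|u\|_{L^\infty}\le C\epsilon^{-1/2}\|\omega\|_{L^2}A^{\epsilon\gamma/2}$, which it then reuses verbatim in the proofs of the two subsequent lemmas (the $L^1\cap L^3$ bound and the product estimate); with your approach those corollaries would require repeating the three-zone split rather than quoting a ready-made inequality.
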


\begin{proof}
For any $x \in \R^2$ and any $R > 0$, we have
\[
  |u(x)| \,\le\, \frac{1}{2\pi}\int_{|y|<R} \frac{1}{|y|}\,
  |\omega(x-y)|\dd y + \frac{1}{2\pi}\int_{|y|\ge R} \frac{1}{|y|}\,
  |\omega(x-y)|\dd y\,.
\]
Fix $1 < p < 2 < q < \infty$, and let $p',q'$ be the conjugate 
exponents to $p,q$, respectively. Applying H\"older's inequality, 
we obtain
\begin{align*}
  \|u\|_{L^\infty} \,&\le\, \frac{1}{2\pi}\biggl(\int_{|y|<R} 
  \frac{1}{|y|^{q'}}\dd y\biggr)^{1/q'} \|\omega\|_{L^q} + 
  \frac{1}{2\pi}\biggl(\int_{|y|\ge R} \frac{1}{|y|^{p'}}\dd y
  \biggr)^{1/p'} \|\omega\|_{L^p} \\
  \,&=\, C\Bigl(\frac{1}{2{-}q'}\Bigr)^{1/q'} R^{1-2/q} \,\|\omega\|_{L^q} + 
  C\Bigl(\frac{1}{p'{-}2}\Bigr)^{1/p'} \frac{1}{R^{-1+2/p}} \,\|\omega\|_{L^p} \\
  \,&=\, C \Bigl(\frac{q{-}1}{q{-}2}\Bigr)^{1-1/q} R^{1-2/q} \,\|\omega\|_{L^q} + 
  C \Bigl(\frac{p{-}1}{2{-}p}\Bigr)^{1-1/p} \frac{1}{R^{-1+2/p}} \,\|\omega\|_{L^p}\,. 
\end{align*}
If we optimize the right-hand side over all possible values of $R > 0$, 
we arrive at
\begin{equation}\label{BSfirst}
  \|u\|_{L^\infty} \,\le\, C\,
  \Bigl(\frac{p{-}1}{2{-}p}\Bigr)^{\frac{(q-2)(p-1)}{2(q-p)}}\,
  \Bigl(\frac{q{-}1}{q{-}2}\Bigr)^{\frac{(q-1)(2-p)}{2(q-p)}}\,
  \,\|\omega\|_{L^p}^{\frac{p(q-2)}{2(q-p)}}\, 
  \,\|\omega\|_{L^q}^{\frac{q(2-p)}{2(q-p)}}\,.
\end{equation}
Remark that the right-hand side of \eqref{BSfirst} has a finite 
limit as $p \to 1$ or $q \to \infty$, so that a similar estimate
holds in these limiting cases too. In particular, we have 
$\|u\|_{L^\infty} \le C \|\omega\|_{L^1}^{1/2}\|\omega\|_{L^\infty}^{1/2}$. 

We next assume that $p = 2-\epsilon$ and $q = 2+\epsilon$ for some 
$\epsilon \in (0,1/2]$. With this choice, we have
\[
  \Bigl(\frac{q{-}1}{q{-}2}\Bigr)^{\frac{(q-1)(2-p)}{2(q-p)}} \,=\, 
  \Bigl(\frac{1+\epsilon}{\epsilon}\Bigr)^{\frac{1+\epsilon}{4}} \,\le\, 
  \frac{C}{\epsilon^{1/4}}\,, \qquad
  \Bigl(\frac{p{-}1}{2{-}p}\Bigr)^{\frac{(q-2)(p-1)}{2(q-p)}} \,=\,
  \Bigl(\frac{1-\epsilon}{\epsilon}\Bigr)^{\frac{1-\epsilon}{4}} \,\le\, 
  \frac{C}{\epsilon^{1/4}}\,,
\]
so that \eqref{BSfirst} reduces to
\begin{equation}\label{BSsecond}
  \|u\|_{L^\infty} \,\le\, \frac{C}{\epsilon^{1/2}}\, 
  \|\omega\|_{L^{2-\epsilon}}^{\frac{2-\epsilon}{4}}\,
  \|\omega\|_{L^{2+\epsilon}}^{\frac{2+\epsilon}{4}}\,.
\end{equation}

In the rest of the proof, we show how estimate \eqref{BS3} can be 
deduced from \eqref{BSsecond}. Let again $1 \le p < 2 < q \le \infty$. 
If $0 < \epsilon \le \epsilon_0 := \min(2-p,q-2)$, we can interpolate
\[
  \|\omega\|_{L^{2-\epsilon}} \,\le\, \|\omega\|_{L^p}^\alpha 
  \,\|\omega\|_{L^2}^{1-\alpha}\,, \qquad
  \|\omega\|_{L^{2+\epsilon}} \,\le\, \|\omega\|_{L^q}^\beta 
  \,\|\omega\|_{L^2}^{1-\beta}\,,
\]
where $\frac{\alpha}{p} + \frac{1-\alpha}{2} = \frac{1}{2-\epsilon}$
and $\frac{\beta}{q} + \frac{1-\beta}{2} = \frac{1}{2+\epsilon}$. 
Substituting into \eqref{BSsecond}, we obtain after straightforward 
calculations
\begin{equation}\label{BSthird}
  \|u\|_{L^\infty} \,\le\, \frac{C}{\epsilon^{1/2}}\, \|\omega\|_{L^2} 
  \,A^{\epsilon \gamma/2}\,, \qquad A \,=\, \frac{\|\omega\|_{L^p}^\theta
  \|\omega\|_{L^q}^{1-\theta}}{\|\omega\|_{L^2}} \,\ge\, 1\,,
\end{equation}
where $\theta$ is as in the statement and $\gamma = \frac{1}{2-p} + 
\frac{1}{q-2}$. It remains to optimize the choice of $\epsilon \in 
(0,\epsilon_0]$. If $A \le \exp((\epsilon_0 \gamma)^{-1})$, we 
take $\epsilon = \epsilon_0$ and \eqref{BSthird} implies
\[
  \|u\|_{L^\infty} \,\le\, C \epsilon_0^{-1/2} \,A^{\epsilon_0 \gamma/2}
  \,\|\omega\|_{L^2} \,\le\, C \epsilon_0^{-1/2} e^{1/2} \,\|\omega\|_{L^2}\,.
\]
If $A > \exp((\epsilon_0 \gamma)^{-1})$, we take $\epsilon = (\gamma 
\log(A))^{-1} < \epsilon_0$, and we obtain
\[
  \|u\|_{L^\infty} \,\le\, C (\gamma \log(A))^{1/2} A^{\frac{1}{2\log A}} 
  \,\|\omega\|_{L^2} \,\le\, C (\log(A))^{1/2} \|\omega\|_{L^2}\,.
\]
We conclude that estimate \eqref{BS3} holds in all cases. 
\end{proof}

We conclude this section with two additional results in the 
spirit of Lemma~\ref{BS2}, which are tailored for our purposes 
in Section~\ref{sec4}. 

\begin{lem}\label{lem:BS3} 
Assume that $\omega \in L^1(\R^2) \cap L^3(\R^2)$, and let
$u = K_{BS}*\omega$. Then 
\begin{equation}\label{BS4}
  \|u\|_{L^\infty} \,\le\, C \|\omega\|_{L^1 \cap L^2} 
  \Bigl(1 + \log_+ \frac{\|\omega\|_{L^3}}{\|\omega\|_{L^1 
  \cap L^2}}\Bigl)^{1/2}\,,
\end{equation}
where $C > 0$ is a universal constant and $\|\omega\|_{L^1 \cap L^2} = 
\max(\|\omega\|_{L^1},\|\omega\|_{L^2})$. 
\end{lem}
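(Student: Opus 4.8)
The plan is to run the region-splitting scheme behind Lemma~\ref{lem:BS2}, but with a \emph{three}-region decomposition of the convolution integral, after first isolating a trivial regime. Write $M = \|\omega\|_{L^1\cap L^2}$; note that $\omega \in L^2(\R^2)$ by interpolation between $L^1$ and $L^3$ (so $M < \infty$), and that $u\in L^\infty(\R^2)$ already by Lemma~\ref{lem:BS1}. If $\|\omega\|_{L^3}\le M$, then $\log_+(\|\omega\|_{L^3}/M)=0$ and \eqref{BS4} merely claims $\|u\|_{L^\infty}\le CM$; this follows from Lemma~\ref{lem:BS1}(2) with $p=1$, $q=3$ (whence $\theta=1/4$), since $\|u\|_{L^\infty}\le C\|\omega\|_{L^1}^{1/4}\|\omega\|_{L^3}^{3/4}\le CM^{1/4}M^{3/4}=CM$. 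Thus I may assume $\|\omega\|_{L^3}>M$.

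In that case, I fix $x\in\R^2$ and, for radii $0<R_1<R_2$ to be chosen, split $\int_{\R^2}|y|^{-1}|\omega(x-y)|\dd y$ over $\{|y|<R_1\}$, $\{R_1\le|y|<R_2\}$, and $\{|y|\ge R_2\}$. On the inner ball, H\"older's inequality with exponents $3/2$ and $3$ gives a bound $C\,R_1^{1/3}\|\omega\|_{L^3}$ (the singularity $|y|^{-3/2}$ being integrable near the origin in the plane); on the annulus, the Cauchy-Schwarz inequality gives $C\,(\log(R_2/R_1))^{1/2}\|\omega\|_{L^2}$, the logarithm coming from the borderline integrability of $|y|^{-2}$ over an annulus; on the exterior region, $|y|^{-1}\le R_2^{-1}$ gives $R_2^{-1}\|\omega\|_{L^1}$. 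I then take $R_2=1$ and $R_1=(M/\|\omega\|_{L^3})^3$, which lies in $(0,1)$ precisely because $\|\omega\|_{L^3}>M$; this makes the inner term equal to $CM$, the exterior term at most $\|\omega\|_{L^1}\le M$, and the annulus term equal to $C\,(3\log(\|\omega\|_{L^3}/M))^{1/2}\|\omega\|_{L^2}\le CM\,(\log(\|\omega\|_{L^3}/M))^{1/2}$. Summing the three contributions and using $1+\log(\|\omega\|_{L^3}/M)\ge1$ in this regime gives \eqref{BS4}.

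The only genuine decision is the choice of the two cut-off radii, and this is where the argument is delicate. A two-region split as in Lemma~\ref{lem:BS1}(2) produces $\|\omega\|_{L^1}^{1/4}\|\omega\|_{L^3}^{3/4}$, which exceeds the claimed bound by a polynomial factor in $\|\omega\|_{L^3}/M$; and one cannot simply quote Lemma~\ref{lem:BS2} with $p=1$, $q=3$, because the logarithm there has $\|\omega\|_{L^2}$ (rather than $M$) in the denominator, and $\|\omega\|_{L^2}$ may be far smaller than $M$. The three-region split succeeds precisely because the $L^2$ norm is invoked only on a bounded annulus whose radius ratio is $(\|\omega\|_{L^3}/M)^3$, so that the logarithmic loss is governed by $\log(\|\omega\|_{L^3}/M)$, while the short-range and long-range tails are absorbed by the $L^3$ and $L^1$ bounds respectively.
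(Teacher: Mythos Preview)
Your proof is correct and takes a genuinely different route from the paper's. The paper does not split the Biot--Savart integral into three regions; instead it recycles the parametrized inequality \eqref{BSthird} from the proof of Lemma~\ref{lem:BS2}. With $p=1$, $q=3$ (hence $\theta=1/4$, $\gamma=2$) that inequality reads
\[
  \|u\|_{L^\infty} \,\le\, \frac{C}{\epsilon^{1/2}}\,\|\omega\|_{L^1}^{\epsilon/4}\,\|\omega\|_{L^2}^{1-\epsilon}\,\|\omega\|_{L^3}^{3\epsilon/4}
  \,\le\, \frac{C}{\epsilon^{1/2}}\,M^{1-3\epsilon/4}\,\|\omega\|_{L^3}^{3\epsilon/4}
  \,=\, \frac{C}{\epsilon^{1/2}}\,M\Bigl(\frac{\|\omega\|_{L^3}}{M}\Bigr)^{3\epsilon/4},
\]
and one then optimizes over $\epsilon\in(0,\epsilon_0]$ exactly as in Lemma~\ref{lem:BS2} to produce the logarithm. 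So the paper's argument is a two-region split with \emph{variable} exponents $2\pm\epsilon$, followed by interpolation toward $L^1$ and $L^3$, followed by optimization in $\epsilon$. Your argument replaces this analytic optimization by a geometric one: three fixed regions, each paired with the natural Lebesgue exponent ($L^3$ near the singularity, $L^2$ on the annulus, $L^1$ at infinity), and the logarithm appears directly from the $L^2$ annulus estimate. Your approach is more elementary and makes the origin of the $\log_+(\|\omega\|_{L^3}/M)$ completely transparent; the paper's approach has the advantage of being a one-line corollary of machinery already built, and the same $\epsilon$-optimization scheme is reused verbatim for Lemma~\ref{lem:BS4}. Your remark that one cannot simply quote the \emph{statement} of Lemma~\ref{lem:BS2} is well taken: the paper indeed goes back to the intermediate estimate \eqref{BSthird} rather than to \eqref{BS3} itself.
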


\begin{proof}
We use inequality \eqref{BSthird} with $p = 1$ and $q = 3$, so that 
$\gamma = 2$. This gives
\[
  \|u\|_{L^\infty} \,\le\, \frac{C}{\epsilon^{1/2}}\, 
  \|\omega\|_{L^1}^{\epsilon/4} \,\|\omega\|_{L^2}^{1-\epsilon}\,
  \|\omega\|_{L^3}^{3\epsilon/4} \,\le\, \frac{C}{\epsilon^{1/2}}\, 
  \|\omega\|_{L^1\cap L^2}^{1-3\epsilon/4}\,\|\omega\|_{L^3}^{3\epsilon/4}\,. 
\]
Optimizing the choice of $\epsilon > 0$ as in the proof of 
Lemma~\ref{lem:BS2}, we arrive at \eqref{BS4}. 
\end{proof}

\begin{lem}\label{lem:BS4} 
Assume that $\omega_1 \in L^2(\R^2) \cap L^3(\R^2)$, $\omega_2 
\in L^1(\R^2) \cap L^2(\R^2)$, and let $u_2 = K_{BS} * \omega_2$. 
Then 
\begin{equation}\label{BS5}
  \|\omega_1 u_2 \|_{L^2} \,\le\, C \|\omega_1\|_{L^2}
  \|\omega_2\|_{L^1 \cap L^2} \Bigl(1 + \log_+ \frac{
  \|\omega_1\|_{L^3}}{\|\omega_1\|_{L^2}}\Bigl)^{1/2}\,.
\end{equation}
\end{lem}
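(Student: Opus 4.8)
The plan is to bound the product by H\"older's inequality, writing $\|\omega_1 u_2\|_{L^2} \le \|\omega_1\|_{L^a}\|u_2\|_{L^q}$ for a pair of exponents with $\frac1a+\frac1q=\frac12$, and then to exploit the freedom in the choice of $q$ in order to generate the logarithmic factor, exactly in the spirit of the Br\'ezis--Gallou\"et argument already used in the proof of Lemma~\ref{lem:BS2}. The relevant range is $q \in [6,\infty)$, equivalently $a \in (2,3]$, since this is precisely the range in which $\|\omega_1\|_{L^a}$ can be controlled by the two norms at our disposal, $\|\omega_1\|_{L^2}$ and $\|\omega_1\|_{L^3}$, while $\|u_2\|_{L^q}$ can be controlled by $\|\omega_2\|_{L^1\cap L^2}$.

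First I would fix $q \ge 6$, set $a = \frac{2q}{q-2} \in (2,3]$, and write $B = \|\omega_1\|_{L^3}/\|\omega_1\|_{L^2}$. Since $\frac1a = \frac{1-6/q}{2} + \frac{6/q}{3}$, interpolation between $L^2$ and $L^3$ gives
\[
  \|\omega_1\|_{L^a} \,\le\, \|\omega_1\|_{L^2}^{\,1-6/q}\,\|\omega_1\|_{L^3}^{\,6/q} \,=\, \|\omega_1\|_{L^2}\,B^{6/q}\,.
\]
For the velocity, set $p = \frac{2q}{q+2} \in [\tfrac32,2)$, so that $\frac1q = \frac1p-\frac12$; then estimate \eqref{BS1} of Lemma~\ref{lem:BS1}, together with the bound $C_q \le C q^{1/2}$ valid for $q \ge 6$ (the asymptotics $C_q = \cO(q^{1/2})$ recorded there, combined with continuity on the compact part of the range), yields $\|u_2\|_{L^q} \le C q^{1/2}\|\omega_2\|_{L^p}$, and the elementary interpolation $\|\omega_2\|_{L^p} \le \|\omega_2\|_{L^1}^{1-t}\|\omega_2\|_{L^2}^{t} \le \|\omega_2\|_{L^1\cap L^2}$ with $t = 2-\tfrac2p \in (0,1)$ gives $\|u_2\|_{L^q} \le C q^{1/2}\|\omega_2\|_{L^1\cap L^2}$. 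Combining the two bounds, for every $q \ge 6$,
\[
  \|\omega_1 u_2\|_{L^2} \,\le\, C\,q^{1/2}\,B^{6/q}\,\|\omega_1\|_{L^2}\,\|\omega_2\|_{L^1\cap L^2}\,.
\]

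It then remains to optimize the prefactor $q^{1/2}B^{6/q}$ over $q \ge 6$. If $B \le e^{1/2}$ I would take $q = 6$, so that $q^{1/2}B^{6/q} = \sqrt 6\,B \le \sqrt 6\,e^{1/2}$, and \eqref{BS5} follows at once since its right-hand side contains the factor $(1+\log_+ B)^{1/2} \ge 1$. If $B > e^{1/2}$ I would take $q = 12\log B \ge 6$, so that $B^{6/q} = B^{1/(2\log B)} = e^{1/2}$ and hence $q^{1/2}B^{6/q} = e^{1/2}\sqrt{12\log B} \le C(1+\log_+ B)^{1/2}$; again \eqref{BS5} follows. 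I expect the only mildly delicate point to be keeping track of the growth $C_q = \cO(q^{1/2})$ of the Hardy--Littlewood--Sobolev constant in \eqref{BS1}, since it is exactly this growth that pins down the exponent $1/2$ of the logarithm in \eqref{BS5}; everything else is routine interpolation and a one-variable optimization.
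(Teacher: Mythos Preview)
Your proof is correct and follows essentially the same route as the paper: H\"older's inequality with $\frac1r+\frac1q=\frac12$, the Hardy--Littlewood--Sobolev bound $\|u_2\|_{L^q}\le Cq^{1/2}\|\omega_2\|_{L^p}$ with $\frac1q=\frac1p-\frac12$, interpolation of $\|\omega_1\|_{L^r}$ between $L^2$ and $L^3$ and of $\|\omega_2\|_{L^p}$ between $L^1$ and $L^2$, and finally optimization over $q\ge 6$. Your explicit choice $q=12\log B$ in the case $B>e^{1/2}$ just spells out the optimization that the paper leaves to the reader by referring back to Lemma~\ref{lem:BS2}.
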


\begin{proof}
Applying H\"older's inequality and Lemma~\ref{lem:BS1}, we obtain 
for any $q \ge 6$\:
\[
  \|\omega_1 u_2 \|_{L^2} \,\le\, \|\omega_1\|_{L^r}\|u_2\|_{L^q}
  \,\le\, C q^{1/2} \|\omega_1\|_{L^r} \|\omega_2\|_{L^p}\,,
\]
where $\frac1r + \frac1q = \frac12$ and $\frac1q = \frac1p - \frac12$. 
By interpolation, we have $\|\omega_2\|_{L^p} \le \|\omega_2\|_{L^1\cap L^2}$
and
\[
   \|\omega_1\|_{L^r} \,\le\, \|\omega_1\|_{L^2}^{-2+6/r} 
   \|\omega_1\|_{L^3}^{3-6/r} \,=\, C \|\omega_1\|_{L^2}^{1-6/q} 
   \|\omega_1\|_{L^3}^{6/q}\,,
\]
hence
\[
   \|\omega_1 u_2 \|_{L^2} \,\le\, C q^{1/2} \|\omega_1\|_{L^2}^{1-6/q} 
   \,\|\omega_1\|_{L^3}^{6/q} \,\|\omega_2\|_{L^1\cap L^2}\,.
\]
Optimizing the choice of $q \in [6,\infty)$ as in the proof of 
Lemma~\ref{lem:BS2}, we arrive at \eqref{BS5}. 
\end{proof}


\end{document}